\begin{document}

\newcommand{\e}{\varepsilon}
\newcommand{\ve}{\mathcal{E}}
\newcommand{\EE}{\ensuremath{\mathbb{E}}}
\newcommand{\qq}[1]{(q;q)_{#1}}
\newcommand{\PP}{\ensuremath{\mathbb{P}}}
\newcommand{\R}{\ensuremath{\mathbb{R}}}
\newcommand{\Rplus}{\ensuremath{\mathbb{R}_{+}}}
\newcommand{\C}{\ensuremath{\mathbb{C}}}
\newcommand{\Z}{\ensuremath{\mathbb{Z}}}
\newcommand{\N}{\ensuremath{\mathbb{N}}}
\newcommand{\Zgzero}{\ensuremath{\mathbb{Z}_{>0}}}
\newcommand{\Zgeqzero}{\ensuremath{\mathbb{Z}_{\geqslant 0}}}
\newcommand{\Zleqzero}{\ensuremath{\mathbb{Z}_{\leq 0}}}
\newcommand{\Q}{\ensuremath{\mathbb{Q}}}
\newcommand{\Y}{\ensuremath{\mathbb{Y}}}
\newcommand{\I}{\ensuremath{\mathbf{i}}}
\newcommand{\Real}{\ensuremath{\mathfrak{Re}}}
\newcommand{\Imag}{\ensuremath{\mathfrak{Im}}}
\newcommand{\subs}{\ensuremath{\mathbf{Subs}}}
\newcommand{\Res}[1]{\underset{{#1}}{\mathbf{Res}}}
\newcommand{\Resfrac}[1]{\mathbf{Res}_{{#1}}}
\newcommand{\Sub}[1]{\underset{{#1}}{\mathbf{Sub}}}
\newcommand{\Sym}{\ensuremath{\mathbf{Sym}}}
\newcommand{\phidist}{\ensuremath{\boldsymbol{\varphi}}}
\newcommand{\mskyl}{\ensuremath{\mathfrak{m}}}
\newcommand{\prech}{\ensuremath{\prec}}
\newcommand{\precv}{\ensuremath{\prec_{\mathrm{v}}}}
\newcommand{\gap}{\ensuremath{\mathrm{gap}}}
\newcommand{\diagq}{\ensuremath{a_{\circ}}}
\newcommand{\diag}{\ensuremath{\alpha_{\circ}}}
\newcommand{\whitenoise}{\ensuremath{\mathscr{\dot{W}}}}
\newcommand{\dist}{\textrm{dist}}
\def \Ai {{\rm Ai}}
\def \Pf {{\rm Pf}}
\def \sgn {{\rm sgn}}
\newcommand{\var}{{\rm var}}
\newcommand{\U}{\ensuremath{\mathcal{U}^{\mathlarger{\llcorner}}}}
\newcommand{\Udiag}{\ensuremath{\mathcal{U}^\angle}}
\newcommand{\rhodiag}{\ensuremath{\rho^{\swarrow}}}
\newcommand{\rholeft}{\ensuremath{\rho^{\leftarrow}}}
\newcommand{\rhoup}{\ensuremath{\rho^{\uparrow}}}
\newcommand{\Proj}{\ensuremath{\textrm{Proj}}}
\newcommand{\Projtilde}{\ensuremath{\widetilde{\textrm{Proj}}}}
\newcommand{\La}{\Lambda}
\newcommand{\la}{\lambda}
\newcommand{\ta}{\theta}

\newcommand{\kernel}{\mathsf{K}}
\newcommand{\bel}[1]{b_{#1}^{\mathrm{el}}}
\newcommand{\fkernel}{\mathsf{f}}
\newcommand{\gkernel}{\mathsf{g}}
\newcommand{\hkernel}{\mathsf{h}}
\newcommand{\scaling}[1]{ \mathfrak{s}(#1)}
\newcommand{\G}{G}
\newcommand{\p}{\mathsf{p}}
\newcommand{\q}{\mathsf{q}}
\newcommand{\eps}{\epsilon}
\newcommand{\ratealpha}{\upalpha}
\newcommand{\rategamma}{\upgamma}
\newcommand{\NN}{N}
\newcommand{\ssum}{\sum^{*}}

\newcommand{\PMM}{\ensuremath{\mathbb{PMM}}}
\newcommand{\PSM}{\ensuremath{\mathbb{PSM}}}
\newcommand{\PHL}{\ensuremath{\mathbb{PHL}}}
\newcommand{\PMP}{\ensuremath{\mathbb{PMP}}}
\newcommand{\EPMM}{\ensuremath{\mathbb{E}^{\mathrm{PMM}}}}
\newcommand{\EPSM}{\ensuremath{\mathbb{E}^{\mathrm{PSM}}}}
\newcommand{\EPHL}{\ensuremath{\mathbb{E}^{\mathrm{PHL}}}}
\newcommand{\EPMP}{\ensuremath{\mathbb{E}^{\mathrm{PMP}}}}

\newtheorem{theorem}{Theorem}[section]
\newtheorem*{theorem*}{Theorem}
\newtheorem{theoremintro}{Theorem}
\renewcommand*{\thetheoremintro}{\Alph{theoremintro}}
\newtheorem{conj}[theorem]{Conjecture}
\newtheorem{lemma}[theorem]{Lemma}
\newtheorem{proposition}[theorem]{Proposition}
\newtheorem{corollary}[theorem]{Corollary}
\newtheorem{claim}[theorem]{Claim}
\newtheorem{prop}{Proposition}

\theoremstyle{definition}
\newtheorem{remark}[theorem]{Remark}
\newtheorem{example}[theorem]{Example}
\newtheorem{definition}[theorem]{Definition}
\newtheorem{definitions}[theorem]{Definitions}

\def\note#1{\marginpar{\raggedright\footnotesize #1}}
\def\change#1{{\color{green}\note{change}#1}}
\def\note#1{\textup{\textsf{\color{blue}((#1))}}}

\usetikzlibrary{shapes.multipart}
\usetikzlibrary{patterns}
\usetikzlibrary{shapes.multipart}
\usetikzlibrary{arrows}
\usetikzlibrary{decorations.markings}
\usepgflibrary{decorations.shapes}
\usetikzlibrary{decorations.shapes}
\usepgflibrary{shapes.symbols}
\usetikzlibrary{shapes.symbols}
\usetikzlibrary{decorations.pathreplacing}

\tikzstyle{fleche}=[>=stealth', postaction={decorate}, thick]
\tikzstyle{axis}=[->, >=stealth', thick, gray]
\tikzstyle{path}=[->, >=stealth', thick]
\tikzstyle{grille}=[dotted, gray]
\newcommand{\pathrr}{\raisebox{-6pt}{\begin{tikzpicture}[scale=0.3]
		\draw[thick] (-1,0) -- (1,0);
		\draw[dotted] (0,-1) -- (0,1);
		\end{tikzpicture}}}
\newcommand{\pathru}{\raisebox{-6pt}{\begin{tikzpicture}[scale=0.3]
		\draw[thick] (-1,0) -- (0,0) -- (0,1);
		\draw[dotted] (0,-1) -- (0,0) -- (1,0);
		\end{tikzpicture}}}
\newcommand{\pathuu}{\raisebox{-6pt}{\begin{tikzpicture}[scale=0.3]
		\draw[dotted] (-1,0) -- (1,0);
		\draw[thick] (0,-1) -- (0,1);
		\end{tikzpicture}}}
\newcommand{\pathur}{\raisebox{-6pt}{\begin{tikzpicture}[scale=0.3]
		\draw[dotted] (-1,0) -- (0,0) -- (0,1);
		\draw[thick] (0,-1) -- (0,0) -- (1,0);
		\end{tikzpicture}}}
\newcommand{\pathbrr}{\raisebox{2pt}{\begin{tikzpicture}[scale=0.4]
		\draw[path] (-1,0) -- (0,0);
		\draw[dotted] (0,0) -- (0,1);
		\end{tikzpicture}}}
\newcommand{\pathbuu}{\raisebox{2pt}{\begin{tikzpicture}[scale=0.4]
		\draw[path] (0,0) -- (0,0.1);
		\draw[thick] (0,0) -- (0,1);
		\draw[dotted] (-1,0) -- (0,0);
		\end{tikzpicture}}}
\newcommand{\pathbru}{\raisebox{2pt}{\begin{tikzpicture}[scale=0.4]
		\draw[thick] (-1,0) -- (0,0);
		\draw[thick] (0,0) -- (0,1);
		\end{tikzpicture}}}
\newcommand{\pathbur}{\raisebox{2pt}{\begin{tikzpicture}[scale=0.4]
		\draw[dotted] (0,0) -- (0,1);
		\draw[dotted] (-1,0) -- (0,0);
		\end{tikzpicture}}}

\title{Stochastic six-vertex model in a half-quadrant and half-line open ASEP}

\author[G. Barraquand]{Guillaume Barraquand}
\address{G. Barraquand,
	Columbia University,
	Department of Mathematics,
	2990 Broadway,
	New York, NY 10027, USA.}
\email{barraquand@math.columbia.edu}
\author[A. Borodin]{Alexei Borodin}
\address{A. Borodin, Department of Mathematics, MIT, Cambridge, USA, and
	Institute for Information Transmission Problems, Moscow, Russia.}
\email{borodin@math.mit.edu}
\author[I. Corwin]{Ivan Corwin}
\address{I. Corwin, Columbia University,
	Department of Mathematics,
	2990 Broadway,
	New York, NY 10027, USA.}
\email{ivan.corwin@gmail.com}
\author[M. Wheeler]{Michael Wheeler}
\address{M. Wheeler, School of Mathematics and Statistics, University of Melbourne, Parkville,
	Victoria 3010, Australia.}
\email{wheelerm@unimelb.edu.au}

\begin{abstract}
We consider the asymmetric simple exclusion process (ASEP) on the positive integers with an open boundary condition. We show that, when starting devoid of particles and for a certain boundary condition, the height function at the origin fluctuates asymptotically (in large time $\tau$) according to the Tracy-Widom GOE distribution on the $\tau^{1/3}$ scale. This is the first example of KPZ asymptotics for a half-space system outside the class of free-fermionic/determinantal/Pfaffian models.

Our main tool in this analysis is a new class of probability measures on Young diagrams that we call half-space  Macdonald processes, as well as two surprising relations. The first relates a special (Hall-Littlewood) case of these measures to the half-space stochastic six-vertex model (which further limits to ASEP) using a Yang-Baxter graphical argument. The second relates certain averages under these measures to their half-space (or Pfaffian) Schur process analogs via a refined Littlewood identity.
\end{abstract}

\maketitle

\setcounter{tocdepth}{1}

\tableofcontents

\numberwithin{equation}{section}


\section{Introduction}
The large scale statistics of random complex systems are often qualitatively independent from much of the microscopic details of the system, so that the laws of large universality classes can be probed via exactly solvable examples. In particular, particle systems in one spatial dimension  -- modeling, for instance, phenomena in non-equilibrium transport, traffic jams, and interface growth models -- are believed to lie, under mild hypotheses, in the Kardar-Parisi-Zhang (KPZ) universality class \cite{kardar1986dynamic, corwin2012kardar, halpin2015kpz, spohn2016kardar}. The large scale statistics of such one-dimensional particle systems have been extensively studied in infinite volume. The case of particle systems connected to boundary reservoirs is physically relevant \cite{spohn1983long}, yet less mathematically tractable.

In this paper, we study the asymmetric simple exclusion process (ASEP) on the positive integers with an open boundary at the origin in contact with a reservoir of particles kept at a constant density. It is expected (for instance due to known results for TASEP, a degeneration of ASEP) that a phase transition happens depending on the local density imposed by the boundary reservoir at the origin, between a maximal current phase and a low density phase. The critical case happens when the boundary imposes an average density $1/2$ of particles at the origin. In this paper, we study the statistics of the number of particles in the system when started empty, and with boundary conditions tuned to this critical point. We prove that after a very long time $\tau$, the random variable scales (around its law of large numbers centering) like $\tau^{1/3}$ and converges in this scale weakly to the GOE Tracy-Widom distribution (Theorem \ref{th:GOElimitintro}).
Further, our results also shed light on the distribution of the solution to the KPZ equation with Neumann boundary condition (Theorem \ref{th:KPZintro}), which arises as a limit of the height function of weakly asymmetric half-line ASEP around this critical point \cite{corwin2016open, parekh2017kpz}.

This is the first proof of (KPZ / random-matrix-theoretic) asymptotics in a non free-fermionic half-space model. Free-fermionic full-space systems have been well-studied via robust mathematical approaches, in particular the Schur processes \cite{okounkov2003correlation}. These are determinantal systems, meaning that correlation functions are written as determinants of a large matrix. The half-space analog of such systems are Pfaffian Schur processes \cite{rains2000correlation, baik2001algebraic, borodin2005eynard, sasamoto2004fluctuations}, whose correlation functions are given via Pfaffians. The full and half-space TASEP (where jumps only go in one direction) and a small handful of other models fit into the free-fermionic framework. 

ASEP and many other important models do not fit into the free-fermionic framework. 
In the last decade, starting from the work of Tracy and Widom on ASEP (on the full line) \cite{tracy2009asymptotics}, many KPZ type limit theorems have been obtained for non-free fermionic models in a full-space. These results have helped refine and expand the notion of KPZ universality. Some attempts have been made to study similar half-space systems, but until now no method has yielded rigorous distributional asymptotics without a Pfaffian structure. Among the existing works on non free-fermionic half-space systems, \cite{o2014geometric} studied the Log-Gamma directed polymer in a half-quadrant using properties of the geometric RSK algorithm on symmetric matrices, and conjectured integral formulas, but these are presently not amenable for asymptotic analysis. Using coordinate Bethe ansatz, \cite{tracy2013asymmetric} derived integral formulas for the transition probabilities in half-line ASEP for certain specific boundary conditions, but these formulas are not amenable to asymptotic analysis either.

Inspired by recent developments relating the integrability of ASEP on the full line to the stochastic six-vertex model \cite{borodin2014spectral, borodin2016stochasticsix, borodin2016asep, aggarwal2016phase}, we study half-line ASEP as a scaling limit of a stochastic six-vertex model in a half-quadrant with a boundary condition corresponding to off-diagonally symmetric alternating sign matrices considered in \cite{kuperberg2002symmetry}. Our analysis of the half-quadrant stochastic six-vertex model relies upon two surprising relations.

The first relation is between the half-quadrant stochastic six-vertex model and a family of measures on sequences of partitions that we call the half-space Hall-Littlewood processes.  These measures (see Definition \ref{def:Macdonaldmeasure}) are half-space variants of Macdonald processes (introduced in \cite{borodin2014macdonald} for the full space case) that generalize Pfaffian Schur processes by replacing Schur functions by Macdonald symmetric functions which rely on two parameters $q,t$. Using a graphical interpretation of the Yang-Baxter and reflection equations, we show that the height function in the half-quadrant stochastic six-vertex model has the same law as an observable of the half-space Hall-Littlewood process, i.e. the degeneration of half-space Macdonald process for $q=0$ (Theorem \ref{th:matchingHL6V}).

The second relation is between certain expectations of observables under the half-space Hall-Littlewood and Schur processes. 
Extracting statistical information from half-space  Macdonald processes is, in general, a difficult task -- see \cite{barraquand2018half} for an approach using Macdonald operators in the spirit of \cite{borodin2014macdonald}. In this paper, an important technical tool that will considerably simplify our analysis is a refined Littlewood summation identity (Proposition \ref{prop:refinedLittlewood}) for Macdonald symmetric polynomials, which was conjectured in \cite{betea2015refined} and proved in \cite{rains2014multivariate}. This allows us to relate certain observables of Macdonald measures for different values of $q$ and consequently connect the Pfaffian Schur process (case $q=t$) and the half-space Hall-Littlewood process (case $q=0$). The outcome is finally an identity between the $t$-deformed Laplace transform of the current in half-line ASEP and a multiplicative functional of a Pfaffian point process with an explicit correlation kernel (Proposition \ref{prop:Fredholmcurrent}), which can be analyzed (using Pfaffian point process methods) asymptotically in several interesting limit regimes. 

In statistical mechanics, a lot of effort went into obtaining determinant representations for partition functions or correlation functions for the XXZ spin chain/six-vertex model, despite the fact that the model is not free-fermionic (see e.g. the review  \cite{kitanine2009algebraic}). Our approach achieves this general goal for half-line ASEP and the half-space stochastic six-vertex model: we uncover a ``hidden''  fermionic structure and hence compute observables as Pfaffians.

\subsection*{ASEP with an open boundary}
The asymmetric simple exclusion process (ASEP) on a half-line with an open boundary at the origin is an interacting particle system on $\Z_{>0}$ where each site is occupied by at most one particle. Formally, this is a continuous time Markov process on the state of particle configurations (see Definition \ref{def:halflineASEP}). Each particle jumps by one to the right at rate $\p$ and to the left at rate $\q$, with $\q<\p$, provided the target site is empty. At the origin, we have a reservoir of particles that injects a particle at site $1$ (whenever it is empty) at rate $\ratealpha$ and removes a particle from site $1$ (whenever it is occupied) at rate $\rategamma$ -- see Figure \ref{fig:generalASEP}.
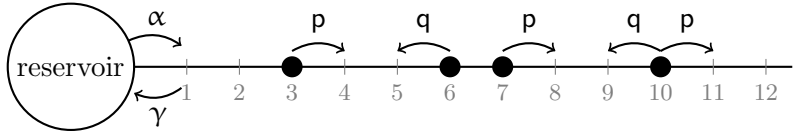
\begin{figure}
\begin{tikzpicture}[scale=0.7]
\draw[thick] (-1.2, 0) circle(1.2);
\draw (-1.2,0) node{reservoir};
\draw[thick] (0, 0) -- (12.5, 0);
\foreach \x in {1, ..., 12} {
	\draw[gray] (\x, 0.15) -- (\x, -0.15) node[anchor=north]{\footnotesize $\x$};
}

\fill[thick] (3, 0) circle(0.2);
\fill[thick] (6, 0) circle(0.2);
\fill[thick] (7, 0) circle(0.2);
\fill[thick] (10, 0) circle(0.2);
\draw[thick, ->] (3, 0.3)  to[bend left] node[midway, above]{$\p$} (4, 0.3);
\draw[thick, ->] (6, 0.3)  to[bend right] node[midway, above]{$\q$} (5, 0.3);
\draw[thick, ->] (7, 0.3) to[bend left] node[midway, above]{$\p$} (8, 0.3);
\draw[thick, ->] (10, 0.3) to[bend left] node[midway, above]{$\p$} (11, 0.3);
\draw[thick, ->] (10, 0.3) to[bend right] node[midway, above]{$\q$} (9, 0.3);
\draw[thick, ->] (-0.1, 0.5) to[bend left] node[midway, above]{$\ratealpha$} (0.9, 0.4);
\draw[thick, <-] (0, -0.5) to[bend right] node[midway, below]{$\rategamma$} (0.9, -0.4);
\end{tikzpicture}
\caption{Jump rates in the half-line ASEP. In this paper we will study more precisely the case when $\p=1, \ \q=t$, $\ratealpha=1/2, \ \rategamma=t/2$. }
\label{fig:generalASEP}
\end{figure}

It was proved in \cite{liggett1975ergodic} that when
\begin{equation}
 \frac{\ratealpha}{\p} + \frac{\rategamma}{\q} =1,
 \label{eq:Liggettscondition}
\end{equation}
there exist stationary measures for this process. Moreover, assuming \eqref{eq:Liggettscondition},  there is a phase transition as $\varrho = \frac{\ratealpha}{\p}$ varies. This parameter $\varrho\in(0,1)$ corresponds to the density of particles that the reservoir imposes at site $1$. When $\varrho<1/2 $, the system admits stationary measures that are  product i.i.d. Bernoulli  with parameter $\varrho$.
When $\varrho \geqslant 1/2$, stationary measures are spatially correlated and more complicated: there is a rarefaction fan with a density of particles $\varrho$ near the origin and density $1/2$ at $+\infty$.

When $\q=0$, the process becomes the totally asymmetric simple exclusion process (TASEP), and this phase transition is much better understood.
It was shown \cite{baik2001asymptotics, baik2017pfaffian} (see also \cite{sasamoto2004fluctuations}) that starting from an empty configuration,  the total number of particles in the system $\NN(\tau)$ at time $\tau$ has Gaussian fluctuations on the $\tau^{1/2}$ scale when $\varrho<1/2$, but has Tracy-Widom GSE\footnote{G(U/O/S)E  stands for Gaussian (Unitary/Orthogonal/Symplectic) Ensemble, and Tracy-Widom distributions were introduced in \cite{tracy1994level, tracy1996orthogonal} as the limiting distributions of the fluctuations of the largest eigenvalues of these ensembles. See Definition \ref{def:GOEdistribution} in the GOE case.} fluctuations on the scale $\tau^{1/3}$ when $\varrho>1/2$ and has Tracy-Widom GOE fluctuations on the scale  $\tau^{1/3}$ in the critical case $\varrho=1/2$. We refer to \cite[Section 6.1]{baik2017pfaffian} for a heuristic explanation of this phase transition using last passage percolation.

Let us go back to the asymmetric case. We expect that modulo a rescaling of time by $\p-\q$, the total  number of particles in half-line ASEP undergoes the same phase transition with the same limiting statistics (based on the fact that the full line TASEP and ASEP asymptotics are identical modulo such time rescaling). Before stating our results, let us mention some of the progress made to uncover the integrability of the model. The stationary distributions in the open ASEP with one or two boundaries can be computed via the matrix product ansatz  \cite{derrida1993exact, grosskinsky2004phase}. This realization led to a number of results (generally in the physics literature) such as the derivation of hydrodynamic limit, understanding of phase diagrams, large deviation principles. There exists an abundant literature on the subject (see for instance  \cite{duhart2014semi} and references therein).
An alternative understanding of stationary measures for half-line ASEP was proposed in
\cite{sasamoto2012combinatorics} using staircase tableaux (see also \cite{uchiyama2004asymmetric, corteel2007tableaux, corteel2010staircase, corteel2011tableaux}). For half-line ASEP with a finite constant number of particles (i.e. with closed boundary conditions $\ratealpha=\rategamma=0$), \cite{tracy2013bose} were able to express transition probabilities  using coordinate Bethe ansatz. For half-line ASEP with general boundary condition, \cite{tracy2013asymmetric} derived integral formulas for the transition probabilities, combining the formulas in the closed boundary case with the analysis of the reservoir. These formulas are explicit only when $\alpha$ or $\gamma$ equal zero, and are presently not amenable for asymptotic analysis in any case.

Let us now state our main result. Without loss of generality\footnote{multiplying all jump parameters by a constant correspond to a division of time by the same multiplicative factor.}, we can assume that $\p=1$. The left jump rate $\q$ will be denoted $t$, as it will coincide  in our analysis with the deformation parameter $t$ of Macdonald symmetric functions. Hence we will denote  time rather by the letter $\tau$ or $T$. Our main result is a limit theorem about the fluctuations of the current in ASEP in the critical case.
\begin{theoremintro}[Theorem \ref{th:GOElimit}]
Under the assumption \eqref{eq:Liggettscondition} (existence of stationary measures) and for $\varrho=1/2$ (critical density), that is when the jump rates are given by
$$ \p=1, \ \q=t, \ \ratealpha=1/2, \ \rategamma=t/2,$$
we have for any $t\in [0,1)$   the weak convergence
$$ \frac{\frac{T}{4}-\NN\left(\frac{T}{1-t} \right)}{2^{-4/3}T^{1/3}} \xRightarrow[T\to +\infty]{} \mathcal{L}_{\rm GOE},  $$
where $\NN(\tau)$ denotes the total number of particles in half-line ASEP at time $\tau$ and $\mathcal{L}_{\rm GOE}$ is the Tracy-Widom GOE distribution (see Definition \ref{def:GOEdistribution}).
\label{th:GOElimitintro}
\end{theoremintro}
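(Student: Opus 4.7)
The plan is to reduce the theorem, via the two structural identities announced in the introduction, to a Fredholm Pfaffian computation followed by a steepest descent analysis. First I would realize half-line ASEP at the critical boundary parameters $\p=1,\,\q=t,\,\ratealpha=1/2,\,\rategamma=t/2$ as a scaling limit of the half-quadrant stochastic six-vertex model, so that the boundary height becomes, after the time rescaling by $1-t$, the particle count $\NN(T/(1-t))$. Theorem \ref{th:matchingHL6V} then identifies this height, in distribution, with an explicit observable of the half-space Hall-Littlewood process, and in the ASEP limit this gives a closed expression for the $t$-deformed Laplace transform $\EE\bigl[1/(\zeta t^{\NN(T/(1-t))};t)_\infty\bigr]$ as a Hall-Littlewood expectation.

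Next I would apply the refined Littlewood identity (Proposition \ref{prop:refinedLittlewood}) to convert this Hall-Littlewood expectation into the corresponding expectation under the Pfaffian Schur process ($q=t$). Since the Pfaffian Schur process is a Pfaffian point process, the resulting multiplicative functional becomes a Fredholm Pfaffian with explicit $2\times 2$ matrix kernel, which is precisely the content of Proposition \ref{prop:Fredholmcurrent}. At this stage the theorem is reduced to asymptotic analysis of that explicit Pfaffian kernel in the large-$T$ limit.

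The third step is a saddle-point analysis. The critical density $\varrho=1/2$ is characterized by the coalescence of two real critical points of the phase function appearing in the double-contour integrand defining each entry of the $2\times 2$ kernel; this coalescence is the mechanism that produces a GOE (rather than GUE) limit in half-space models. Scaling the spectral variables on the $T^{1/3}$ scale near the double critical point, I would deform the contours to steepest descent curves, taking care to avoid the poles introduced by the reflection boundary factor, and verify the pointwise convergence of the matrix kernel to the GOE Airy matrix kernel, together with uniform tail estimates sufficient to pass the limit through the Fredholm Pfaffian expansion.

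Finally, I would invert the $t$-Laplace transform. For $t\in[0,1)$ the map $x\mapsto 1/(\zeta t^{x};t)_\infty$ with $\zeta=-t^{-s}$ approximates the indicator $\mathbf{1}_{x>s}$ in a suitable concentrating sense as $T\to\infty$; choosing $s$ of the form $T/4-2^{-4/3}T^{1/3}r$ and combining this with the convergence of the Fredholm Pfaffian from the previous step yields convergence of the distribution function of the rescaled particle count to $\mathcal{L}_{\rm GOE}$ (Definition \ref{def:GOEdistribution}). I expect the steepest descent step to be the main obstacle: one must simultaneously control two coupled contours with coalescing critical points, track the boundary factor coming from the half-space geometry through the deformation, and produce decay estimates strong enough to justify interchanging the limit with the Fredholm Pfaffian sum.
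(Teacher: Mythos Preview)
Your overall architecture matches the paper's: Proposition~\ref{prop:Fredholmcurrent} (built from Theorem~\ref{th:matchingHL6V}, Proposition~\ref{prop:refinedLittlewood}, and the six-vertex $\to$ ASEP limit) reduces the problem to Laplace-method asymptotics of an explicit $2\times 2$ Pfaffian kernel, followed by an indicator-function approximation of the $t$-Laplace transform to extract the distribution function. Two points need correction.

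First, your explanation of the GOE mechanism is wrong. There is no coalescence of two critical points: the phase function $G(z)=\tfrac12\frac{z+1}{z-1}-\tfrac14\log z$ has a single cubic critical point at $z=-1$ (with $G'(-1)=G''(-1)=0$, $G'''(-1)\neq 0$), exactly as in full-space models where the same local analysis produces the GUE Airy kernel. What distinguishes the half-space case and forces GOE is the \emph{Pfaffian} (rather than determinantal) structure of the Schur process kernel $\kernel^{\complement}$, inherited from the even-column constraint. The saddle-point analysis itself is the standard one-critical-point cubic scaling; the limiting $2\times 2$ matrix kernel is GOE because the prelimit kernel already has that skew-symmetric block form.

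Second, there is a genuine technical obstacle you have not anticipated. After the residue computations (Lemma~\ref{lem:residuecomputations}) the kernel $\kernel^{\rm ASEP}$ contains terms carrying parity indicators $\mathds{1}_{u\in 2\Z}$, $\mathds{1}_{v\in 2\Z}$ and the function $r(u,v)$ of \eqref{eq:defr}. Under the rescaling $u=\scaling{x}$ these indicators oscillate and have \emph{no pointwise limit}, so you cannot simply prove pointwise convergence of the kernel. The paper resolves this by observing that in the Pfaffian expansion each summand is a product of kernel entries with \emph{distinct} arguments, so the parity indicators in different factors are independent and may be replaced by their averages ($\mathds{1}_{u\in 2\Z}\Rightarrow\tfrac12$, $(-1)^{u+v}r(u,v)\Rightarrow\tfrac14\sgn(y-x)$) in the sense of convergence of the integrated Pfaffian terms (equation \eqref{eq:cvoncompacts}). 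You will also need to conjugate by $(-1)^{\scaling{x}+\scaling{y}}$ to absorb the phase $e^{\scaling{x} i\pi}$ coming from the critical point at $-1$. Without this averaging argument the steepest-descent step does not close. (Minor: the relevant $q$-Pochhammer is $(-t^{x+\lceil N\rceil_2};t^2)_\infty$, with base $t^2$, reflecting the even-column structure.)
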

When $t=0$, ASEP becomes TASEP, and the result was proved as Theorem 1.3 in \cite{baik2017pfaffian} (see also \cite{baik2001asymptotics, sasamoto2004fluctuations} for very similar results in the context of last passage percolation in a half-space with with geometric weights).

For half-line TASEP, the current fluctuations are also known for $\varrho\neq1/2$ \cite{baik2017pfaffian} (equivalently $\alpha\neq 1/2$). Thus, we expect that current fluctuations of half-line open ASEP are Tracy-Widom GSE distributed when $\varrho>1/2$ and Gaussian when $\varrho<1/2$. Extending Theorem \ref{th:GOElimitintro} to other values of $\varrho$ does not seem to be immediately accessible from the techniques developed in this paper.
\subsection*{Stochastic six-vertex model in a half-quadrant} We will approach the half-line ASEP  through a scaling limit of another integrable model, the stochastic six-vertex model in a half-quadrant, which we believe is also interesting in its own right. On the whole line $\Z$, this approach was recently used for studying ASEP in \cite{aggarwal2016phase, aggarwal2018current, borodin2016asep, corwin2017transversal}. Our half-space model is closely related to off-diagonally symmetric alternating sign matrices, whose weighted enumeration was computed in \cite{kuperberg2002symmetry}.
\begin{figure}
\begin{center}
\begin{tikzpicture}[scale=0.8]
\foreach \x in {1, ..., 7} {
\draw[dotted] (0,\x) -- (\x,\x) -- (\x, 8);
\draw[path] (0,\x) --(0.1,\x);
\draw[gray] (\x, \x-1) node{$\x$};
\draw[gray] (-1,\x) node{$\x$};
}
\draw[path] (0,1) --(1,1);
\draw[path] (0,2) --(1,2) -- (1,3) -- (2,3) -- (2,4) -- (4,4);
\draw[path] (0,3) --(1,3) -- (1,4) -- (2,4) -- (2,5) -- (3,5) -- (3,6) -- (4,6) -- (5,6) -- (5,7)--(7,7) ;
\draw[path] (2,2) -- (2,3) -- (3,3);
\draw[path] (5,5) -- (5,6) -- (6,6);
\draw[path] (0,4) --(1,4) -- (1,5) -- (2,5) -- (2,6) -- (3,6) -- (3,7) -- (5,7) -- (5,8);
\draw[path] (0,5) --(1,5) -- (1,6) -- (2,6) -- (2,7) -- (3,7) -- (3,8);
\draw[path] (0,6) --(1,6) -- (1,7) -- (2,7) -- (2,8) ;
\draw[path] (0,7) --(1,7) -- (1,8)  ;
\draw[path] (2,2)--(2,2.1);
\draw[path] (5,5)--(5,5.1);
\end{tikzpicture}
\end{center}
\caption{Sample configuration of the stochastic six-vertex model in a half-quadrant.}
\label{fig:example6v}
\end{figure}
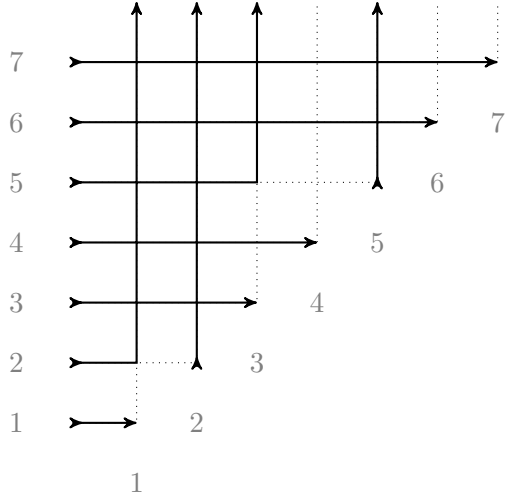
Consider the half-quadrant $ \lbrace (x,y)\in \Z_{>0}^2: x\leqslant y \rbrace $.
The stochastic six-vertex model in the half-quadrant is a probability measure on collections of up-right paths (see Figure \ref{fig:example6v})
 determined by
the Boltzmann weights  (see Section \ref{sec:def6v} for a more precise definition)
$$ \PP\left(\pathrr\right) = \frac{1-a_xa_y}{1-ta_xa_y}, \ \ \PP\left(\pathru\right) = \frac{(1-t)a_xa_y}{1-ta_xa_y},\ \ \PP\left(\pathuu\right) = \frac{t(1-a_xa_y)}{1-ta_xa_y}, \ \ \PP\left(\pathur\right) =\frac{1-t}{1-ta_xa_y}, $$
and boundary condition
\begin{equation}
 \PP\left(\pathbrr\hspace{0.2cm}\right) =  \PP\left(\pathbuu\hspace{0.2cm}\right) = 1,\ \ \ \PP\left(\pathbru\hspace{0.2cm}\right) =  \PP\left(\pathbur\hspace{0.2cm}\right) =0.
 \label{eq:boundaryconditionintro}
\end{equation}
Since these weights are stochastic, in the sense that
$$ \PP\left(\pathrr\right) + \PP\left(\pathru\right) = \PP\left(\pathur\right)+\PP\left(\pathuu\right)=1, $$
the measure on paths can be constructed in a Markovian way starting from the left boundary of the quadrant.
When the parameters $a_x$ go to $1$, the paths will turn at almost every vertex point, and hence follow a straight staircase path in the $\pi/4$ direction. If one scales those parameters as $a_x\equiv 1-\e$ and rescale time by $\e^{-1}$, one can interpret the horizontal positions of paths in a finite neighborhood of the diagonal as a particle system that converges to half-line  ASEP as $\e$ goes to zero (see Proposition \ref{prop:cv6vtoASEP}). The assumption that $\varrho=1/2$ in Theorem \eqref{th:GOElimitintro} comes from the specific choice of boundary condition \eqref{eq:boundaryconditionintro}. More general boundary conditions do not seem to be related to the half-space Hall-Littlewood measure considered in this paper.  It is likely that a result similar to Theorem \ref{th:GOElimitintro} can also be proved for the six-vertex model in a half-quadrant by our methods (see Remark \ref{rem:asymptotics6v}), but we do not pursue that here.

\subsection*{KPZ equation on the positive reals}
The KPZ equation on $\R_{\geqslant 0}$ with Neumann boundary condition is the (ill-posed) stochastic partial differential equation
\begin{equation}
\begin{cases}
\partial_{\tau} \mathscr H = \frac{1}{2}\Delta  \mathscr H + \frac{1}{2}\big(\partial_x \mathscr H\big)^2 + \dot W,\\
\partial_x \mathscr H (\tau, x)\big\vert_{x=0} = A \qquad (\forall \tau > 0),
\end{cases}
\label{eq:KPZequation}
\end{equation}
where $W$ is a space-time white noise. We say that $\mathscr H$ solves this equation in the Cole-Hopf sense with narrow-wedge initial condition when $\mathscr H= \log \mathscr Z$ and $\mathscr Z$ is a mild solution (see Definition \ref{def.mild}) to the multiplicative stochastic heat equation with Robin boundary condition
\begin{equation} \label{eq:mSHEintro}
\begin{cases}
\partial_{\tau} \mathscr Z = \tfrac12 \Delta \mathscr Z + \mathscr Z \dot W\;,\\
\partial_x \mathscr Z (\tau, x)\big\vert_{x=0} = A  \ \mathscr Z(\tau, 0)  \qquad (\forall \tau > 0),
\end{cases}
\end{equation}
with delta initial condition. Based on the convergence of weakly asymmetric ASEP to the KPZ equation from \cite{corwin2016open}, we expect that a certain limit of half-line ASEP height function (denoted $\mathcal{H}(\tau)$ in the next Theorem) weakly converges as $t$ goes to $1$ and has the same
distribution as $\mathscr{H}(\tau, 0)$ where $\mathscr H$ solves \eqref{eq:KPZequation} with $A=-1/2$.  We explain however in Section \ref{sec:KPZ} that the results of \cite{corwin2016open}  do not directly apply to the boundary and initial conditions that we are considering (\cite{corwin2016open} assumes $A\geqslant 0$ and H\"older continuous initial conditions). After the posting of the first version of this paper, \cite[Theorem 1.2]{parekh2017kpz} extended the convergence result from \cite{corwin2016open} and showed that the random variable denoted $\mathcal{H}(\tau)$ in the next theorem has the same
distribution as $\mathscr{H}(\tau, 0)$ where $\mathscr H$ solves \eqref{eq:KPZequation} with $A=-1/2$.
\begin{theoremintro}[Theorem \ref{th.LaplaceKPZ} and Corollary \ref{prop:multiplicativefunctional}]
 Under the scalings
$$ t= e^{-\epsilon}, \ \ \tau=\frac{\e^{-3}\tilde{\tau}}{1-t} \approx \e^{-4} \tilde{\tau}, \ \  $$
the random variable
\begin{equation*}
\mathcal{U}_{\e}(\tilde \tau)  = \frac{ t^{\big( \NN(\tau)- \e^{-3}\tilde\tau/4\big) }}{1-t^2}
\end{equation*}
weakly converges as $\e\to 0$ to a positive random variable $\mathcal{U}(\tilde \tau)$. Furthermore, if
$$ \mathcal{H}(\tau) := \log\big( 4\ \mathcal{U}(8\tau) \big) -\frac{\tau}{24},$$
then for any $z>0$,
$$\EE\left[ \exp\left( \frac{-z}{4} \exp\left(\frac{\tau}{24} + \mathcal H (\tau)\right)\right) \right]   = \EE\left[ \prod_{i=1}^{+\infty} \frac{1}{\sqrt{1+z \exp\left((\tau/2)^{1/3}  \mathfrak{a}_i\right)}} \right],$$
where  $\lbrace \mathfrak{a}_i\rbrace_{i=1}^{\infty} $ forms the GOE point process (i.e. the sequence of rescaled eigenvalues of a matrix from the GOE, see Definition \ref{def:GOEdistribution}).
\label{th:KPZintro}
\end{theoremintro}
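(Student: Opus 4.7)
The starting point is Proposition \ref{prop:Fredholmcurrent}, which identifies the $t$-deformed Laplace transform $\EE\bigl[1/(\zeta t^{\NN(\tau)};t)_\infty\bigr]$ of the half-line ASEP current with a multiplicative functional of an explicit Pfaffian point process coming from the half-space Hall-Littlewood process. The plan is to take the weakly asymmetric scaling $t=e^{-\epsilon}$, $\tau=\epsilon^{-3}\tilde\tau/(1-t)$ inside this identity, and to extract simultaneously the weak convergence of $\mathcal{U}_\epsilon(\tilde\tau)$ and the claimed Laplace transform formula for $\mathcal{H}(\tau)$.

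On the left-hand side, I would use the classical $t\to 1$ asymptotics of the $q$-Pochhammer symbol: writing $\zeta=-z\,\epsilon/(1-t^2)\cdot(\text{correction})$ and recalling $\mathcal{U}_\epsilon(\tilde\tau)=t^{\NN(\tau)-\epsilon^{-3}\hat\tau/4}/(1-t^2)$, the product $(\zeta t^{\NN(\tau)};t)_\infty$ degenerates, under $t=e^{-\epsilon}$, to a single exponential $\exp(-z\,\mathcal{U}_\epsilon(\tilde\tau))$ in the appropriate window. This is the same mechanism used to extract the KPZ equation from $q$-TASEP or ASEP through the Macdonald framework. Together with monotonicity estimates on the current (for instance via coupling or directly from the Pfaffian observables), this produces tightness of $\mathcal{U}_\epsilon(\tilde\tau)$ and identifies any subsequential limit via its Laplace transform, yielding the weak limit $\mathcal{U}(\tilde\tau)$.

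On the right-hand side, I would perform a steepest descent analysis of the Pfaffian correlation kernel of Proposition \ref{prop:Fredholmcurrent} in the scaling $t=e^{-\epsilon}$, $\tau \sim \epsilon^{-4}\tilde\tau$, with points scaled so that the level set $t^s$ is of order $1/(1-t^2)$ relative to the centering $\epsilon^{-3}\hat\tau/4$. Under these scalings the critical point analysis should match the standard half-space Pfaffian/GOE asymptotics (cf.\ \cite{baik2001algebraic, sasamoto2004fluctuations, baik2016facilitated}): the integrand concentrates at a double critical point and, after the rescaling $s\mapsto (\tau/2)^{1/3}\mathfrak{a}$, the kernel converges to the Pfaffian kernel whose point process is the GOE eigenvalue process $\{\mathfrak{a}_i\}$. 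The multiplicative functional $\prod_i (1+\zeta t^{s_i})^{-1}$ then becomes $\prod_i(1+z\exp((\tau/2)^{1/3}\mathfrak{a}_i))^{-1/2}$, the square root being the hallmark of the passage from a Pfaffian to the eigenvalue ensemble of the orthogonal symmetry class. Combining the two sides produces exactly the identity in the statement.

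The hard part will be the steepest descent argument for the Pfaffian kernel: the kernel contains several building blocks (with sign functions, double contour integrals, and Cauchy-like factors), each of which must be scaled, deformed to a descent contour, and controlled uniformly, including in the tails needed to swap the limit through the infinite product. One must also show that the multiplicative functional does not degenerate in the limit, which requires bounds on the point density both at the edge (to get the GOE process) and in the bulk (to control the tail of the product). A secondary subtlety is the justification of the $t$-Pochhammer to exponential degeneration uniformly in $\NN(\tau)$; here one uses the positivity of $\mathcal{U}_\epsilon$ and the fact that the current is one-sidedly bounded, which prevents loss of mass. Finally, interpreting $\mathcal{H}(\tau)$ as the Cole--Hopf solution of \eqref{eq:KPZequation} with $A=-1/2$ requires an independent argument, as noted in the text: the convergence of weakly asymmetric half-line ASEP to the KPZ equation from \cite{corwin2016open} does not cover this boundary parameter, and this identification is discussed separately in Section \ref{sec:KPZ}.
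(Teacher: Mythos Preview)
Your broad strategy---take the weakly asymmetric limit on both sides of Proposition~\ref{prop:Fredholmcurrent} (with $-t^x$ replaced by $\zeta t^x$), identify the left-hand side as a Laplace transform via $q$-Pochhammer asymptotics, and treat the right-hand side by steepest descent on $\kernel^{\rm ASEP}$---is indeed the paper's approach. However, you have misidentified the structure of the multiplicative functional and the origin of the square root, and this would derail the computation if carried out literally.

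The pre-limit multiplicative functional on the Pfaffian side is \emph{not} $\prod_i(1+\zeta t^{s_i})^{-1}$. It is $\prod_j(1+\hkernel_y(j))$ where $1+\hkernel_y(j)=\dfrac{(\zeta t^{y+j+1};t^2)_\infty}{(\zeta t^{y+j};t^2)_\infty}$, a ratio of $t^2$-Pochhammers; see equations \eqref{eq:multiplicativefunctionalsSchur} and the definition of $\hkernel_y$ just after \eqref{eq:Laplacecurrent}. The square root in the limiting functional $\gkernel(x)=(1-\zeta e^{\hat\sigma x})^{-1/2}-1$ does not emerge from any ``Pfaffian to orthogonal-ensemble'' mechanism in the limit; it is already present at finite $\epsilon$, because $(1+\hkernel_y(\scaling{x}))(1+\hkernel_y(\scaling{x}-\epsilon))=(1-\zeta t^{-\epsilon^{-1}x})^{-1}$, so the square root simply records the telescoping of the $t^2$-Pochhammer ratio (Lemma~\ref{lem:limhkernel}). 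The $t^2$ structure in turn traces back to the even-columns constraint $\lambda'$ even in the half-space Schur measure, via Proposition~\ref{prop:FredholmHL}. Without this, one would not obtain the correct limiting functional.

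A secondary difference: the paper does not invoke monotonicity or coupling for tightness of $\mathcal{U}_\epsilon$. Instead (Step~3 of the proof), convergence of the Fredholm Pfaffian side already furnishes convergence of $\EE[e^{\zeta\mathcal{U}_\epsilon}]$ for all $\zeta\leqslant 0$ (via Lemma~\ref{lem:limPochhammer} applied uniformly), and one concludes by compactness of sub-probability measures and the fact that Laplace transforms determine positive measures; the limit is a genuine probability measure because the Fredholm Pfaffian tends to $1$ as $\zeta\to 0^-$. Your monotonicity route could presumably be made to work, but it is not needed.

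Finally, the tail control in the Fredholm expansion is more delicate than in the GOE limit theorem (Section~\ref{sec:GOElimit}) because the domain is now all of $\R$ rather than $(\kappa,\infty)$: the paper needs both exponential decay of $\hkernel_y(\scaling{x})$ as $x\to -\infty$ (Lemma~\ref{lem:expoboundh}) and polynomial bounds on the rescaled kernel $\kernel^{(T)}$ uniformly in $T$ (Lemma~\ref{lem:expoboundKPZ}). You correctly flag this as the hard part, but note that the required bounds are two-sided and the argument is not a routine reuse of the edge estimates.
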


\begin{remark}
One can deduce immediately from the above theorem that as $\tau$ goes to infinity,
$$ \lim_{\tau\to+\infty} \PP\left( \frac{\mathcal{H}(\tau) + \frac{\tau}{24}}{2^{-1/3}\tau^{1/3} } \leqslant x \right) = \PP\big( \mathfrak{a}_1 \leqslant x \big)  = F_{\rm GOE}(x).$$
This is the half-space analogue of Corollary 1.3 in \cite{amir2011probability} (see also \cite{sasamoto2010exact, calabrese2010free, dotsenko2010replica}) where a similar limit theorem was proved for the solution to the KPZ equation on $\R$ (the scaling there is exactly the same but the limit distribution is the Tracy-Widom GUE distribution instead of the GOE).
\end{remark}
\begin{remark}
The KPZ equation \eqref{eq:KPZequation} with boundary parameter $A$ is considered in the case $A=+\infty $ in the physics paper \cite{gueudre2012directed}, where  large-time Tracy-Widom $GSE$ asymptotics are obtained via a non-rigorous replica method. The paper \cite{borodin2016directed} studies the case $A=0$, though the results are also based on a non-rigorous replica method and some partially conjectural combinatorial simplifications.  It is not yet clear whether the conjectural results from the latter work are compatible with the assumption that ASEP and TASEP have the same fluctuations. Indeed, if the height function in  ASEP and TASEP satisfies the same limit theorem modulo a rescaling of time by the asymmetry, one would expect that when the parameters of ASEP are scaled so as to obtain $A=0$ in the KPZ equation limit, the large time fluctuations of the height function would be related to a crossover distribution between Tracy-Widom GOE and GSE distributions as in \cite{baik2001asymptotics, baik2017pfaffian}  (rather than the GSE Tracy-Widom distribution in \cite{borodin2016directed}, which would arise only when the density of particles enforced by the boundary near the origin is strictly greater than $1/2$).
\end{remark}

\subsection*{Outline of the paper}

In Section \ref{sec:Macdonald}, we define half-space Macdonald processes and explain how  a refined Littlewood summation identity for Macdonald symmetric polynomials (see Proposition \ref{prop:refinedLittlewood}) allows us to relate certain observables of half-space Macdonald measures for different values of $q$. More precisely, in Section \ref{sec:Fredholm} we express  certain observables of the half-space Hall-Littlewood process (case $q=0$) as Fredholm Pfaffians involving the correlation kernel of the Pfaffian Schur process (case $q=t$). In Section \ref{sec:6v}, we show that half-space Hall-Littlewood processes are related to the  stochastic six-vertex model in a half-quadrant, horizontal sections of the latter being marginals of the former. This type of connection between Macdonald measures and (higher-spin) vertex model goes back to \cite{borodin2016stochastic}, but our  proof is a half-space variant of the corresponding full-space result in \cite{borodin2016between}. The main ingredient is a $t$-boson representation of Hall-Littlewood polynomials, and graphical  interpretations of Yang-Baxter and reflection equations from \cite{wheeler2016refined}. Under a scaling limit, the height function of the stochastic six-vertex model converges to ASEP, so that we obtain in Section \ref{sec:ASEP} a Fredholm Pfaffian formula characterizing the distribution of the current in half-line ASEP. We  exploit this formula in two asymptotic regimes.
In Section \ref{sec:GOElimit}, we perform asymptotic analysis on these formulas to prove the convergence of the current in ASEP to the GOE Tracy-Widom distribution (Theorem \ref{th:GOElimit}). In Section \ref{sec:KPZ} we discuss the convergence of ASEP height function to the KPZ equation based on \cite{corwin2016open}, and prove that the height function of ASEP at the origin converges in the weak asymmetry regime to a multiplicative functional of the GOE point process (Theorem \ref{th.LaplaceKPZ} and Corollary \ref{prop:multiplicativefunctional}).

\subsection*{Ackowledgements}  A. B. was partially supported by the National Science Foundation grant
DMS-1607901 and by Fellowships of the Radcliffe Institute for Advanced Study and the Simons Foundation.  I.C. was partially supported by the NSF through DMS-1208998, the Clay Mathematics Institute through a Clay Research Fellowship, the Poincar\'e Institute through the Poincar\'e chair,  and the Packard Foundation through a Packard Fellowship for Science and Engineering. G.B. was partially supported by the Packard
Foundation through I.C.'s fellowship. M. W. was partially  supported by the Australian Research Council grant DE160100958.

\section{Half-space Macdonald Processes}
\label{sec:Macdonald}
A {\it partition} $\lambda$ is a non-increasing sequence of non-negative integers $\lambda_1\geqslant \lambda_2\geqslant \cdots$ only finitely many of which are nonzero. We will sometimes use the notation $\lambda=  1^{m_1}2^{m_2}\dots$ for a partition $\la$ where $m_j$ is the multiplicity of the integer $j$ in the sequence of $\la_i$'s.  We denote by $\Y$ the set of all partitions. The {\it length} of $\lambda$ is the number of non-zero elements and is denoted $\ell(\lambda)$. The transpose $\lambda'$ of a partition is defined by $\lambda'_i = \sharp\{j:\lambda_j\geqslant i\}$. In particular, $\lambda'_1=\ell(\la)$.
A partition can be identified with a {\it Young diagram}. For a box $\Box$ in a Young diagram, $\ell(\Box)$ is equal to the number of boxes in the diagram below it (the leg length) and $a(\Box)$ is equal to the number of boxes in the diagram to the right of it (the arm length) -- see Figure \ref{fig:diagram}. A partition is called  {\it even} if all $\lambda_i$ are even. We write $\mu\subseteq \lambda$ if $\mu_i\leqslant \lambda_i$ for all $i$ and call $\lambda/\mu$ a {\it skew Young diagram}. A partition $\mu$ {\it interlaces} with $\lambda$ (denoted by $\mu \prec \lambda$) if for all $i$, $\lambda_i \geqslant \mu_i \geqslant \lambda_{i+1}$. In the language of Young diagrams, this means that $\lambda$ can be obtained from $\mu$ by adding a {\it horizontal strip} in which at most one box is added per column.

\begin{figure}
\begin{tikzpicture}[scale=0.55]
\foreach \y [count=\xi] in {5,3,1,1,0}
\foreach \x in {0, ..., \y}
\draw (\x, -\xi) -- ++(1,0) -- ++(0,1) -- ++(-1,0)--cycle;
\fill[black] (1, -1) -- ++(1,0) -- ++(0,1) -- ++(-1,0)--cycle;
\draw[<->, >=stealth'] (2, -0.5) -- (6, -0.5) node[midway, fill=white, fill opacity=0.4, text opacity=1, anchor=north]{arm};
\draw[<->, >=stealth'] (1.5, -1) -- (1.5, -4) node[midway, fill=white, fill opacity=0.4, text opacity=1, anchor=west]{leg};

\begin{scope}[xshift=7cm]
\foreach \y [count=\xi] in {4,3,1,1,0,0}
\foreach \x in {0, ..., \y}
\draw (\x, -\xi) -- ++(1,0) -- ++(0,1) -- ++(-1,0)--cycle;
\end{scope}
\end{tikzpicture}
\caption{The left-most diagram corresponds with partition $\lambda = (6,4,2,2,1)$.
The black box has arm length $ a(\blacksquare)=4 $ and leg length $\ell(\blacksquare)=3$. The next diagram is $\lambda$'s transpose $\lambda' = (5,4,2,2,1,1)$.}
\label{fig:diagram}
\end{figure}
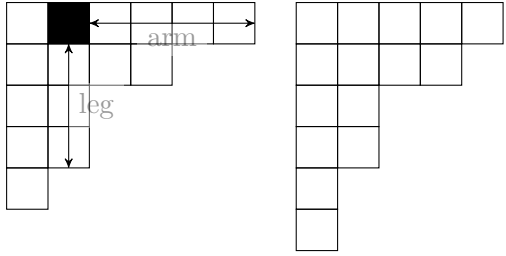

Let $\Sym$ be the ring of symmetric functions in countably many independent variables (see \cite[Chapter I, Section 2]{macdonald1995symmetric}).
The {\it skew Macdonald $P$ ($Q$) functions} $P_{\lambda/\mu}$ ($Q_{\lambda/\mu}$) (introduced in \cite[Chapter VI]{macdonald1995symmetric}) are symmetric functions indexed by skew partitions $\lambda/\mu$ and have coefficients that are rational functions of two auxiliary parameters $q,t$ which we will assume to be in $[0,1)$ throughout the paper. Macdonald symmetric functions become Hall-Littlewood symmetric functions when $q=0$, and Schur functions when $q=t$.

For $\lambda \in \Y$, define symmetric functions
\begin{equation}
\ve_{\lambda} = \sum_{\mu' \textrm{ even}} \bel{\mu} \ Q_{\lambda/\mu}
\end{equation}
where ``el'' stands for ``even leg'', and $\bel{\mu}\in \mathbb{Q}[q,t]$ is given by
\begin{equation}
\bel{\mu} = \prod_{\substack{\Box\in \mu\\\ell(\Box)\textrm{ even}}} b_\mu(\Box), \qquad b_\mu(\Box) = \begin{cases} \dfrac{1-q^a t^{\ell+1}}{1-q^{a+1}t^{\ell}}, &\Box\in \mu,\\ 1,&\Box\notin \mu, \end{cases}
\end{equation}
with $\ell=\ell(\Box)$ and $a=a(\Box)$ in the definition of $b_{\mu}(\Box)$. All summations over partitions will always be over the set $\Y$, sometimes subject to specified additional constraints, e.g. $\mu'$ even.

We now recall or derive certain identities involving Macdonald symmetric functions which will be utilized in the remainder of the paper. The skew Cauchy identity \cite[Section VI.7]{macdonald1995symmetric} holds for two sets of formal variables $x$ and $y$:
\begin{equation}\label{eq:SkewCauchy}
\sum_{\kappa} P_{\kappa/\nu}(x)Q_{\kappa/\lambda}(y) =\Pi(x;y)\sum_{\tau}Q_{\nu/\tau}(y)P_{\lambda/\tau}(x)
\end{equation}
where $\Pi(x;y)$ is given by \cite[(2.5) in Ch. VI]{macdonald1995symmetric}
\begin{equation}\label{eq:PI}
\Pi(x;y) := \sum_{\kappa\in\Y} P_{\kappa}(x)Q_{\kappa}(y) = \prod_{i,j}  \phi(x_i y_j)  \qquad\text{where}\qquad
\phi(u) = \frac{(tu;q)_{\infty}}{(u;q)_{\infty}},
\end{equation}
and $(a;q)_n = (1-a)(1-qa)\cdots (1-q^{n-1}a)$ is the $q$-Pochhammer symbol (with infinite product form when $n=\infty$). Macdonald $P$ and $Q$ functions satisfy a branching rule whereby \cite[Section VI.7]{macdonald1995symmetric}
\begin{equation}\label{eq:branching}
\sum_{\mu} P_{\nu/\mu}(x) P_{\mu/\lambda}(y) = P_{\nu/\lambda}(x,y) \qquad\text{and} \qquad \sum_{\mu} Q_{\nu/\mu}(x) Q_{\mu/\lambda}(y) = Q_{\nu/\lambda}(x,y).
\end{equation}
They also satisfy a Littlewood identity \cite[Section VI.7, Ex. 4(i)]{macdonald1995symmetric} \begin{equation}\label{eq:Littleweood}
\sum_{\nu' \textrm{ even}} \bel{\nu}P_{\nu}(x) = \prod_{i<j} \phi(x_ix_j)  =:\Phi(x).
\end{equation}
Turning to the $\ve_{\lambda}$ function, it follows from the definition along with the branching rule (\ref{eq:branching}) that
\begin{equation}\label{eq:brancghingve}
\sum_{\mu} Q_{\lambda/\mu}(x) \ve_{\mu}(y) = \ve_{\lambda}(x,y).
\end{equation}
We also have a skew version of the Littlewood identity.
\begin{proposition}
For a set of formal variables $x$,
\begin{equation}
\sum_{\nu' \textrm{ even}} \bel{\nu} P_{\nu/\la}(x) = \Phi(x) \sum_{\mu' \textrm{ even}} \bel{\mu} Q_{\la/\mu}(x) = \Phi(x) \ve_{\la}(x).
\label{eq:skewLittlewood}
\end{equation}
\end{proposition}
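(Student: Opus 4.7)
The plan is to reduce the identity to the classical Littlewood identity \eqref{eq:Littleweood} and the skew Cauchy identity \eqref{eq:SkewCauchy}. The second equality in \eqref{eq:skewLittlewood} is immediate from the definition of $\ve_\la$, so the substance lies in the first equality.

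The strategy is to introduce an auxiliary alphabet $y$, multiply both sides by $P_\la(y)$, and sum over $\la \in \Y$. Since $\{P_\la(y)\}_{\la \in \Y}$ is a $\Q(q,t)$-basis of the ring of symmetric functions in $y$, verifying that the two resulting generating series in $(x,y)$ coincide is equivalent to the desired coefficient-wise identity in $\la$.

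For the left-hand side, the branching rule \eqref{eq:branching} (with empty bottom index) gives $\sum_\la P_{\nu/\la}(x) P_\la(y) = P_\nu(x,y)$. Swapping the order of summation and applying the classical Littlewood identity \eqref{eq:Littleweood} to the joint alphabet $(x,y)$, one obtains $\Phi(x,y)$, which factors as $\Phi(x)\Phi(y)\Pi(x;y)$ directly from the product definition of $\Phi$ (the three factors correspond respectively to ordered pairs inside $x$, inside $y$, and mixed pairs).

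For the right-hand side, swapping the order of summation one needs to evaluate $\sum_\la P_\la(y) Q_{\la/\mu}(x)$; this is precisely the specialization of the skew Cauchy identity \eqref{eq:SkewCauchy} with outer skew index $\nu = \emptyset$ (using the symmetry of $\Pi$ in its two arguments), which yields $\Pi(x;y) P_\mu(y)$. A second application of \eqref{eq:Littleweood}, this time in the alphabet $y$, collapses the remaining sum $\sum_{\mu' \text{ even}} \bel{\mu} P_\mu(y)$ to $\Phi(y)$. Hence the right-hand generating series also equals $\Phi(x)\Phi(y)\Pi(x;y)$, and we are done.

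The main conceptual step is the choice to pair against $P_\la(y)$: the branching rule and the $\nu=\emptyset$ specialization of skew Cauchy then conspire to reduce the skew Littlewood statement to two invocations of the classical one. Beyond this, the manipulation is routine formal-power-series bookkeeping, and no new Macdonald-level computation is required.
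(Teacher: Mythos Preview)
Your argument is correct and takes a genuinely different route from the paper's.

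The paper proves the identity first in a single variable, where for each $\lambda$ there is a \emph{unique} pair $\mu\prec\lambda\prec\nu$ with $\mu',\nu'$ even, and then invokes the explicit one-variable computation $\bel{\nu}P_{\nu/\lambda}(x)=\bel{\mu}Q_{\lambda/\mu}(x)$ from \cite[p.~350]{macdonald1995symmetric}; the general case follows by induction on the number of variables, peeling off one variable at a time via the branching rule and the skew Cauchy identity.

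You bypass the single-variable base case entirely. By pairing with $P_\lambda(y)$ and summing over $\lambda$, you reduce both sides to $\Phi(x)\Phi(y)\Pi(x;y)$ using only the classical Littlewood identity \eqref{eq:Littleweood} (once on the joint alphabet $(x,y)$, once on $y$) and the $\nu=\varnothing$ specialization of skew Cauchy. The linear independence of $\{P_\lambda(y)\}$ in the graded completion then recovers the coefficient-wise statement. This is cleaner in that it requires no Macdonald-specific one-variable computation, only identities already recorded in the paper; the price is the introduction of an auxiliary alphabet and the (routine, as you note) justification that all interchanges of summation are valid degree-by-degree. The paper's inductive approach, by contrast, is more self-contained and makes the role of the one-variable structure explicit, which is sometimes useful when one wants to track how the identity builds up.
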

\begin{proof}
Consider first the one variable case. $P_{\nu/\la}(x)$ and $Q_{\la/\mu}(x)$ are zero unless $ \mu\prec\la\prec\nu$. For a given $\la\in \Y$, there are unique $\mu$ and $\nu$ such that $\mu'$ and $\nu'$ are even and  $ \mu\prec\la\prec\nu$. It was proved in \cite[Eq. (4) page 350]{macdonald1995symmetric} by an explicit computation that
$$ \bel{\nu} P_{\nu/\la}(x) = \bel{\mu} Q_{\la/\mu}(x).$$
Since in the single variable case $\Phi(x)=1$, identity \eqref{eq:skewLittlewood} is established in that case. To deduce the result for more general $x=(x_1, \dots, x_n)$, denote $F_{\la}(x) = \sum_{\nu' \textrm{ even}} \bel{\nu} P_{\nu/\la}(x) $, so that
\begin{align*}
F_{\la}(x_1, \dots, x_n) &= \sum_{\nu' \textrm{ even}} \bel{\nu} P_{\nu/\la}(x_1, \dots, x_n)\\
&= \sum_{\nu' \textrm{ even}} \bel{\nu} \sum_{\tau} P_{\nu/\tau}(x_n)P_{\tau/\la}(x_1, \dots, x_{n-1})\\
&= \sum_{\tau} \sum_{\mu' \textrm{ even}} \bel{\mu} Q_{\tau/\mu}(x_n)P_{\tau/\la}(x_1, \dots, x_{n-1})\\
&= \Pi(x_1, \dots, x_{n-1};x_1) \sum_{\mu' \textrm{ even}} \bel{\mu}\sum_{\tau}  Q_{\la/\tau}(x_n)P_{\mu/\tau}(x_1, \dots, x_{n-1})\\
&= \Pi(x_1, \dots, x_{n-1};x_1)\sum_{\tau}  Q_{\la/\tau}(x_n)F_{\tau}(x_1, \dots, x_{n-1}),
\end{align*}
where in the second equality we have used the branching rule \eqref{eq:branching}, in the third equality we have used the single variable case established above, and in the fourth  equality we have used the skew-Cauchy identity \eqref{eq:SkewCauchy}. We may now iterate the relation
$$ F_{\la}(x_1, \dots, x_n)  =  \Pi(x_1, \dots, x_{n-1};x_1)\sum_{\tau}  Q_{\la/\tau}(x_n)F_{\tau}(x_1, \dots, x_{n-1}), $$
use the branching rule \eqref{eq:branching}, and deduce that
$$ F_{\la}(x) = \prod_{i<j} \phi(x_ix_j)  \sum_{\mu' \textrm{ even}} \bel{\mu} Q_{\la/\mu}(x),$$
as desired.
\end{proof}
Now combining the skew Cauchy and skew Littlewood identities, we obtain
\begin{equation}
\sum_{\mu } \ve_{\mu}(x) P_{\mu/\lambda}(y) = \Pi(x,y)\Phi(y)\, \ve_{\lambda}(x,y),
\label{eq:generalsummation}
\end{equation}
and in particular
\begin{equation}
\sum_{\mu } \ve_{\mu}(x) P_{\mu}(y) = \Pi(x,y)\Phi(y).
\label{eq:pfaffianmacdonaldsummation}
\end{equation}

A {\it specialization} $\rho$ of $\Sym$ is an algebra homomorphism of $\Sym\to \C$. For instance,  evaluating symmetric functions at a fixed finite set of variables defines such a homomorphism -- see \cite[Section 2.2.1]{borodin2014macdonald} for a more detailed discussion. We denote the application of $\rho$ to $f\in \Sym$ as $f(\rho)$, thus extending the usual notation for the evaluation at a set of variables. The {\it trivial} specialization $\rho=\varnothing$ takes the value 1 for the constant function $1\in \Sym$ and $0$ for all homogeneous functions $f\in\Sym$ of higher degree. The {\it union} of two specializations $\rho_1,\rho_2$ is defined via the relation
$$
p_k(\rho_1,\rho_2) = p_{k}(\rho_1)+p_{k}(\rho_2)
$$
and extended to all of $\Sym$ by linearity. Here $p_k(x) = x_1^k+x_2^k+\cdots$ are the Newton power sum symmetric functions. Notationally, we may write the union of $\rho_1,\rho_2$ by putting a comma between them. We say a specialization $\rho$ is {\it Macdonald nonnegative} if for every skew diagram $\lambda/\mu$, $P_{\lambda/\mu}(\rho)\geqslant 0$.

\begin{definition} For two Macdonald-nonnegative specialization $\rho^+, \rho^-$, we define the \emph{half-space Macdonald measure} as a probability measure $\PMM$ on Young diagrams $\la\in \Y$ such that
$$\PMM(\la) = \frac{1}{\Pi(\rho^+; \rho^-)\Phi(\rho^+)}\,P_{\la}(\rho^+)\ve_{\la}(\rho^-).$$
We will denote by $\EPMM$ the corresponding expectation. This is a well-defined probability measure thanks to \eqref{eq:pfaffianmacdonaldsummation}, provided the series converges. In this paper, we  deal with Macdonald-nonnegative specializations corresponding to evaluating functions into finitely many symmetric variables in $(0,1)$, so that  the sums \eqref{eq:SkewCauchy} and  \eqref{eq:skewLittlewood} always converge.
\label{def:Macdonaldmeasure}
\end{definition}

\begin{definition} For Macdonald nonnegative specializations $\rho^+_1, \dots, \rho^+_{n}$ and $\rho^-_1, \dots, \rho^-_{n}$, we define the \emph{half-space Macdonald process} as a probability measure $\PMP$ on sequences of Young diagrams
$$  \varnothing \subset \la^{(1)}\supset \mu^{(1)} \subset \la^{(2)}\supset \mu^{(2)} \subset \dots \mu^{(n-1)} \subset \la^{(n)} \supset \varnothing,$$
such that
\begin{equation}
\PMM\big( \la, \mu \big) = \frac{1}{Z(\rho)} P_{\la^{(1)}}(\rho^+_1)Q_{\la^{(1)}/\mu^{(1)}}(\rho^-_{1})  P_{\la^{(2)}/\mu^{(1)}}(\rho^+_2) Q_{\la^{(2)}/\mu^{(2)}}(\rho^-_{2})  \dots P_{\la^{(n)}/\mu^{(n-1)}}(\rho^+_{n}) \ve_{\la^{(n)}}(\rho^-_n),
\label{eq:defPMP}
\end{equation}
where
$$ Z(\rho) =  \Phi(\rho^+) \prod_{i\leqslant j}\Pi(\rho^+_i; \rho^-_j), $$
and $\rho^+ = (\rho^+_{1}, \dots, \rho^+_N)$.
We will denote by $\EPMP$ the corresponding expectation. One may check that this indeed defines a probability measure  by repeated application of the skew Cauchy and Littlewood identities \eqref{eq:SkewCauchy} and  \eqref{eq:skewLittlewood}, provided all series converge.
\label{def:Macdonaldprocesses}
\end{definition}

\begin{proposition}
Under the notations of Definition \ref{def:Macdonaldmeasure} and \ref{def:Macdonaldprocesses}, the marginal distribution of $\la^{(k)}$ under the half-space Macdonald process is the half-space  Macdonald measure with specializations
$$ \rho^+= (\rho^+_1, \dots, \rho^+_{k}), \ \ \rho^- = (\rho^-_{k}, \dots, \rho^-_{n}).$$
\end{proposition}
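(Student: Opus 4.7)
The plan is to integrate out every partition in the chain \eqref{eq:defPMP} except $\la^{(k)}$ by a telescoping procedure split at $\la^{(k)}$. On the left of $\la^{(k)}$ the computation is standard Macdonald-process algebra: alternate the skew Cauchy identity \eqref{eq:SkewCauchy} (to remove a $\la^{(j)}$ against the following $Q$) and the $P$-branching rule \eqref{eq:branching} (to remove a $\mu^{(j)}$ against the following $P$). On the right of $\la^{(k)}$ the computation is the new half-space ingredient: alternate the Littlewood-type summation identity \eqref{eq:generalsummation} (to remove a $\la^{(j)}$ against the preceding $P$, absorbing the accumulated $\ve$) and the $\ve$-branching rule \eqref{eq:brancghingve} (to absorb an intervening $Q$ into the $\ve$).

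Concretely, the left block reduces as follows. Starting from $\mu^{(0)}:=\varnothing$, one step of \eqref{eq:SkewCauchy} (with $\nu=\varnothing$) gives $\sum_{\la^{(1)}}P_{\la^{(1)}}(\rho^+_1)Q_{\la^{(1)}/\mu^{(1)}}(\rho^-_1)=\Pi(\rho^+_1;\rho^-_1)\,P_{\mu^{(1)}}(\rho^+_1)$, and one step of \eqref{eq:branching} gives $\sum_{\mu^{(1)}}P_{\mu^{(1)}}(\rho^+_1)P_{\la^{(2)}/\mu^{(1)}}(\rho^+_2)=P_{\la^{(2)}}(\rho^+_1,\rho^+_2)$. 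Iterating $k-1$ times reduces the entire block to $\bigl(\prod_{1\leq i\leq j\leq k-1}\Pi(\rho^+_i;\rho^-_j)\bigr)\,P_{\la^{(k)}}(\rho^+_1,\ldots,\rho^+_k)$. Analogously, the right block reduces starting from $\la^{(n)}$: one step of \eqref{eq:generalsummation} yields $\Pi(\rho^-_n;\rho^+_n)\Phi(\rho^+_n)\,\ve_{\mu^{(n-1)}}(\rho^-_n,\rho^+_n)$, and then \eqref{eq:brancghingve} absorbs $Q_{\la^{(n-1)}/\mu^{(n-1)}}(\rho^-_{n-1})$ into $\ve_{\la^{(n-1)}}(\rho^-_{n-1},\rho^-_n,\rho^+_n)$. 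Iterating inward and finishing with \eqref{eq:brancghingve} on $Q_{\la^{(k)}/\mu^{(k)}}(\rho^-_k)$ collapses the right block to $\ve_{\la^{(k)}}$ (whose argument is the union of all specializations appearing strictly to the right of $\la^{(k)}$) times an accumulated product of $\Phi(\rho^+_j)$'s and cross $\Pi$ factors for $k+1\leq j\leq n$.

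Combining the two blocks, dividing by $Z(\rho)=\Phi(\rho^+)\prod_{i\leq j}\Pi(\rho^+_i;\rho^-_j)$, and applying the multiplicative decomposition $\Phi(\rho^+_1,\ldots,\rho^+_n)=\prod_j\Phi(\rho^+_j)\prod_{i<j}\Pi(\rho^+_i;\rho^+_j)$ together with the linearity of $\Pi$ in each of its arguments, most of the $\Pi$ and $\Phi$ factors cancel and what remains is precisely the normalization $1/\bigl[\Pi(\rho^+;\rho^-)\Phi(\rho^+)\bigr]$ of Definition \ref{def:Macdonaldmeasure} for the asserted specializations. The main obstacle is the bookkeeping in this last step: one must verify that every $\Pi(\rho^+_i;\rho^+_j)$ and every $\Pi(\rho^+_i;\rho^-_j)$ produced along the way is matched by a corresponding factor in $Z(\rho)$ or in the expansion of $\Phi(\rho^+)$, and that the accumulated argument of $\ve_{\la^{(k)}}$ simplifies (after these cancellations) to the claimed list $(\rho^-_k,\ldots,\rho^-_n)$.
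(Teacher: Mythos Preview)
Your telescoping strategy is exactly the one the paper sketches: sum out all $\mu^{(i)}$ and $\la^{(j)}$ with $j\neq k$ using \eqref{eq:branching}, \eqref{eq:SkewCauchy}, \eqref{eq:brancghingve} and \eqref{eq:generalsummation}. The left-block and right-block reductions you describe are carried out correctly.

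The gap is in your last sentence. After the right-block telescope, $\ve_{\la^{(k)}}$ carries the union of \emph{all} specializations it has absorbed along the way: each application of \eqref{eq:generalsummation} with $y=\rho^+_j$ inserts $\rho^+_j$ into the argument of $\ve$, and each application of \eqref{eq:brancghingve} with $x=\rho^-_j$ inserts $\rho^-_j$. Hence the surviving factor is $\ve_{\la^{(k)}}(\rho^-_k,\ldots,\rho^-_n,\rho^+_{k+1},\ldots,\rho^+_n)$, not $\ve_{\la^{(k)}}(\rho^-_k,\ldots,\rho^-_n)$. These extra $\rho^+_j$ sit \emph{inside} a symmetric function, not in a scalar prefactor, so no amount of cancellation against $Z(\rho)$ or against the expansion of $\Phi(\rho^+)$ can remove them. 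If you carry the bookkeeping through, the marginal is the half-space Macdonald measure with $\rho^+=(\rho^+_1,\ldots,\rho^+_k)$ and $\rho^-=(\rho^-_k,\ldots,\rho^-_n,\rho^+_{k+1},\ldots,\rho^+_n)$; the statement as written omits the $\rho^+_{>k}$ from $\rho^-$. This does not affect anything downstream in the paper, which only invokes the case $k=n$ (the ascending process marginal at the top), where there is no right block and the issue is vacuous.
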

\begin{proof}
The result follows from  summing the probabilities  \eqref{eq:defPMP} over all $\mu^{(i)}$ and $\la^{(j)}$ for $j\neq k$, using  the branching rule \eqref{eq:branching}, the skew Cauchy identity  \eqref{eq:SkewCauchy} and the summation formula  \eqref{eq:generalsummation}.
\end{proof}


\begin{remark}
Half-space Macdonald measures/processes naturally generalize Pfaffian Schur measures/processes  introduced in \cite{borodin2005eynard, baik2001algebraic}, the latter corresponding to the degeneration when $q=t$. The term Pfaffian comes from the fact that the Pfaffian Schur process defines a  Pfaffian point process (Theorem 3.3 in \cite{borodin2005eynard}). However, half-space Macdonald processes do not correspond to any Pfaffian point process.
\end{remark}

\begin{definition}
We define the \emph{ascending half-space Macdonald process} as the half-space Macdonald process defined by specializations\footnote{we could  allow $\rho_n^-$ to be arbitrary to study  more general  boundary conditions.}  $\rho^-_1 = \dots = \rho^-_n=\varnothing$ and $\rho^+_i=(a_i)$ (specialization into a single variable $a_i$), where the variables $(a_1, \dots, a_n)$ are in $(0,1)$. This process is supported on sequences of interlaced partitions
$$ \la^{(1)}  \prec   \la^{(2)} \prec  \dots \prec  \la^{(n)}.$$
\label{def:ascendingprocess}
\end{definition}

In the rest of this section, we focus on the marginal of $\la^{(n)}$ for the ascending half-space  Macdonald process, i.e. the half-space Macdonald measure with specializations
$$ \rho^+=(a_1, \dots, a_n), \ \ \rho^-=\varnothing.$$

We will need one more identity, first conjectured in \cite{betea2015refined} and proved in \cite{rains2014multivariate}.
\begin{proposition}[\cite{rains2014multivariate}, Proposition 6.26] For an even integer $n$ and any $u\in \C$, Macdonald symmetric polynomials satisfy
\begin{equation}
\frac{1}{\Phi(x)}\sum_{\la'\textrm{ even}}  \ \  \prod_{i \textrm{ even}}\left(1-uq^{\lambda_i}t^{n-i} \right)\ \bel{\la}P_{\la}(x_1, \dots, x_n) =  \dfrac{\Pf\left[ \dfrac{x_i-x_j}{1-x_ix_j} - u\dfrac{x_i-x_j}{1-tx_ix_j}\right]}{\Pf\left[ \dfrac{x_i-x_j}{1-x_ix_j}\right]}.
\label{eq:refinedLittlewood}
\end{equation}
\label{prop:refinedLittlewood}
\end{proposition}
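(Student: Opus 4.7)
The plan is to prove this as a polynomial identity in the parameter $u$, exploiting the fact that both sides have the same degree in $u$. Multiplying both sides by $\Pf\left[\frac{x_i-x_j}{1-x_ix_j}\right]$, each side becomes a polynomial in $u$ of degree exactly $n/2$: on the left, this is visible from the product $\prod_{i\text{ even}}(1-uq^{\lambda_i}t^{n-i})$, which has $n/2$ factors linear in $u$; on the right, it follows from expanding the Pfaffian as a signed sum over perfect matchings of $[n]$, each contributing $n/2$ entries that are affine in $u$. Hence it suffices to verify the identity at $n/2+1$ distinct values of $u$.

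The distinguished evaluation is $u=0$: the right hand side is manifestly $1$, and the left hand side collapses to $\frac{1}{\Phi(x)}\sum_{\lambda'\text{ even}} b_\lambda^{\mathrm{el}} P_\lambda(x) = 1$ by the unrefined Littlewood identity \eqref{eq:Littleweood}. For the remaining $n/2$ evaluations, the natural candidates are $u = t^{-(n-2k)}$ for $k=1,\dots,n/2$. At $u = t^{-(n-2k)}$ the even-index factor $(1-uq^{\lambda_{2k}}t^{n-2k}) = 1-q^{\lambda_{2k}}$ vanishes unless $\lambda_{2k}=0$, forcing $\ell(\lambda)<2k$ and truncating the left-hand sum to partitions supported on at most $2k-2$ parts. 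A parallel collapse should occur on the right: at these values of $u$ the matrix inside the Pfaffian acquires a rank-deficient block structure, and the Pfaffian factors as a lower-rank Pfaffian times an explicit product.

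The plan is then to set up an induction on $n$ (even). The base case $n=2$ is a direct check: only partitions $\lambda=(k,k)$ contribute, $b_\lambda^{\mathrm{el}}$ simplifies to $(t;q)_k/(q;q)_k$, and the identity reduces to a single-variable $q$-hypergeometric summation matching the $2\times 2$ Pfaffian. For the inductive step, I would combine the special evaluations above with the branching rule
\begin{equation*}
P_\lambda(x_1,\dots,x_n) = \sum_{\mu\subseteq\lambda} P_{\lambda/\mu}(x_{n-1},x_n)\, P_\mu(x_1,\dots,x_{n-2}),
\end{equation*}
together with explicit formulas for the two-variable skew Macdonald polynomials, to reduce the identity at $u=t^{-(n-2k)}$ to the same identity in $2k-2$ variables.

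The main obstacle is controlling the right-hand Pfaffian at the special values $u=t^{-(n-2k)}$: extracting the precise factorization out of a sum over $(n-1)!!$ perfect matchings is combinatorially delicate, and the even-column constraint on $\lambda$ interacts nontrivially with the branching rule. A cleaner and more uniform route — the one followed in \cite{rains2014multivariate} — bypasses this combinatorial verification by embedding both sides into the framework of $BC_n$-symmetric Koornwinder interpolation polynomials, where the identity becomes a consequence of a vanishing/symmetry property of these polynomials, yielding the result for all $u$ simultaneously without case-by-case checking.
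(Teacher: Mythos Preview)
The paper does not supply its own proof of this proposition: it is quoted verbatim as a result of Rains \cite{rains2014multivariate}, conjectured earlier in \cite{betea2015refined}, and used as a black box. So there is no ``paper's proof'' to compare against beyond the reference to the Koornwinder/interpolation-polynomial machinery you already mention in your last paragraph.

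Your proposed elementary route has a concrete error that breaks the inductive scheme. At $u=t^{-(n-2k)}$ the factor $(1-uq^{\lambda_{2k}}t^{n-2k})$ indeed becomes $1-q^{\lambda_{2k}}$, but this quantity vanishes precisely when $\lambda_{2k}=0$, not when $\lambda_{2k}\neq 0$. Thus the specialization kills the terms with $\ell(\lambda)<2k$ and retains only those with $\ell(\lambda)\geqslant 2k$; it does not truncate the sum to short partitions as you claim. Consequently the reduction ``to the same identity in $2k-2$ variables'' does not occur, and the induction as outlined collapses. You would need either a different family of evaluation points or a different mechanism (e.g.\ specialising some of the $x_i$ rather than $u$) to get a genuine reduction in the number of variables. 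Combined with the Pfaffian factorisation you already flag as the main unresolved obstacle, the proposal as written is a plan with a gap rather than a proof; the $BC_n$-interpolation argument of \cite{rains2014multivariate} remains the only complete route in the literature.
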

The most striking fact in the above identity \eqref{eq:refinedLittlewood} is that  the right-hand-side does not depend on $q$. This yields identities  that relate half-space Macdonald measures for different values of $q$. Consider a set of parameters $a_1, \dots, a_n \in (0,1)$ for $n$ even, and let us  denote by $\PSM$ and $\EPSM$ the half-space Macdonald measure and expectation associated with parameters $a_i$
in the Schur case $q=t$. It is important to note that $\PSM$ and $\EPSM$ do not depend on the parameter $q=t$. In this context \eqref{eq:refinedLittlewood} implies that for any $q,t\in[0,1)$ and $u\in \C$
\begin{equation}
\EPMM\left[ \prod_{i \textrm{ even}}\left( 1-uq^{\lambda_i}t^{n-i}\right)\right] = \EPSM\left[ \prod_{i\textrm{ even}} \left(1-ut^{\la_i+n-i}\right)\right].
\label{eq:relationMMSchurbis}
\end{equation}
Letting $u=-t^x$ and dividing both sides by $(-t^x; t^2)_{\infty}$, we rewrite this identity as
\begin{equation}
\EPMM\left[ \prod_{i \in 2\Z_{\geqslant 0}}\left(\frac{1+q^{\lambda_{n-i}}t^{x+i}}{1+t^{x+i}}\right)\right] = \EPSM\left[  \prod_{i\in 2\Z_{\geqslant 0}} \left(\frac{1+t^{x+\la_{n-i}+i}}{1+t^{x+i}}\right)\right],
\label{eq:relationMMSchur}
\end{equation}
for any $x\in \R$, with the convention that $\la_{-m}=+\infty$ for $m>0$.

\begin{remark}
One could prove using \eqref{eq:relationMMSchur} a half-space analogue of Corollary 5.9 in \cite{borodin2016stochastic}: The distribution of the length of a half-space Macdonald random partition is asymptotically the same whatever are the values  of $q$ and $t$, in the sense of asymptotic equivalence as in \cite[Definition 5.2]{borodin2016stochastic}. We do not need this result for the present paper.
\label{rem:asymptoticequivalence}
\end{remark}

\section{Fredholm Pfaffian formulas}
\label{sec:Fredholm}

\subsection{Notations}

The Pfaffian of a skew-symmetric  $2k\times 2k$ matrix $A$ is defined by
\begin{equation}
\Pf(A) = \frac{1}{2^k k!} \sum_{\sigma\in\mathcal{S}_{2k}} \mathrm{sgn}(\sigma) a_{\sigma(1)\sigma(2)}a_{\sigma(3)\sigma(4)} \dots a_{\sigma(2k-1)\sigma(2k)},
\label{eq:defPfaffian}
\end{equation}
where $\mathrm{sgn}(\sigma)$ is the signature of the permutation $\sigma$.

Let $(\mathbb{X}, \mu) $ be a measure space. For a $2\times 2$-matrix valued  skew-symmetric kernel,
$$ \kernel(x,y) = \begin{pmatrix}
\kernel_{11}(x,y) & \kernel_{12}(x,y)\\
\kernel_{21}(x, y) & \kernel_{22}(x,y)
\end{pmatrix},\ \ x,y\in \mathbb{X},$$
we define the Fredholm Pfaffian (introduced in \cite[Section 8]{rains2000correlation}) by
\begin{equation}
\Pf\big[\mathsf{J}+\kernel\big]_{L^2(\mathbb{X}, \mu)} = 1+\sum_{k=1}^{\infty} \frac{1}{k!}
\int_{\mathbb{X}} \dots \int_{\mathbb{X}}  \Pf\Big( \kernel(x_i, x_j)\Big)_{i,j=1}^k \mathrm{d}\mu^{\otimes k}(x_1, \dots, x_k),
\label{eq:defFredholmPfaffian}
\end{equation}
provided the series converges, where $\mu^{\otimes k}$ is the product measure. The kernel $\mathsf{J}$ is defined by
$$ \mathsf{J}(x,y)  = \delta_{x=y}
\begin{pmatrix}
0 & 1\\-1 &0
\end{pmatrix} .$$
For a function $\fkernel: \mathbb{X} \to \R$, we define
\begin{align*}
\Pf\big[\mathsf{J}+\fkernel \cdot \kernel\big]_{L^2(\mathbb{X}, \mu)} &:= \Pf\big[\mathsf{J}+ \kernel\big]_{L^2(\mathbb{X}, \fkernel \mu)}\\
&= 1+\sum_{k=1}^{\infty} \frac{1}{k!}
\int_{\mathbb{X}} \dots \int_{\mathbb{X}}  \left(\prod_{i=1}^k\fkernel(x_i)\right)\ \Pf\Big( \kernel(x_i, x_j)\Big)_{i,j=1}^k \mathrm{d}\mu^{\otimes k}(x_1 \dots x_k),
\end{align*}
provided the series converge. We will use the notation the notation  $\Pf\big[\mathsf{J}+\kernel\big]_{\mathbb{L}^2(\mathbb{X})},$
when $\mathbb{X}$ is a subset of $\R^n$ equipped with Lebesgue measure, and the notation
$\Pf[\mathsf{J}+\kernel]_{\ell^2(\mathbb{X})}$
when  $\mathbb{X}$ is a discrete set equipped with the counting measure.
In order to study limits of Fredholm Pfaffians, we will need that the expansion in \eqref{eq:defFredholmPfaffian} is absolutely convergent, and for that we will use Hadamard's bound in the form of the next lemma.
\begin{lemma}[Lemma 2.5 in \cite{baik2017pfaffian}]
	Let $\kernel(x, y)$ a $2\times 2$ matrix valued skew symmetric kernel. Assume that there exist a constant $C>0$ and constants $0\leqslant a < b$ such that for $x,y \in \mathbb{X}\subset \R$,
	\begin{equation*}
	\vert \kernel_{11}(x, y) \vert <C e^{ax+ay}, \ \
	\vert \kernel_{12}(x, y)\vert  = \vert K_{21}(y,x)\vert  <C e^{ax-by}, \ \
	\vert \kernel_{22}(x, y)\vert  <C e^{-bx-by}.
	\end{equation*}
	Then, for all $k\in \Z_{>0}$, $x_1, \dots, x_k\in \mathbb{X}$,
	$$ \Big\vert \Pf\big( \kernel(x_i, x_j)\big)_{i,j=1}^k \Big\vert < (2k)^{k/2}C^k\prod_{i=1}^{k}e^{-(b-a) x_i}.$$
	\label{lem:hadamard}
\end{lemma}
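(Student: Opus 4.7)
The plan is to reduce to a standard application of Hadamard's inequality via a diagonal conjugation that absorbs the exponential prefactors and leaves a matrix whose entries are all bounded by $C$.

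Let $A = \bigl(\kernel(x_i,x_j)\bigr)_{i,j=1}^{k}$, viewed as a $2k \times 2k$ skew-symmetric matrix with rows and columns indexed by pairs $(i,\alpha)$, $\alpha \in \{1,2\}$. The three given inequalities can be summarized uniformly: the block $\kernel(x_i,x_j)$ has, after multiplication on the left by $\mathrm{diag}(e^{-ax_i},e^{bx_i})$ and on the right by $\mathrm{diag}(e^{-ax_j},e^{bx_j})$, all four of its entries bounded by $C$ in absolute value. So the first step is to introduce
\begin{equation*}
D = \mathrm{diag}\bigl(e^{-ax_1},e^{bx_1},\,e^{-ax_2},e^{bx_2},\,\ldots,\,e^{-ax_k},e^{bx_k}\bigr),
\end{equation*}
and consider $B := DAD$, which is skew-symmetric (since $D$ is diagonal and $A$ is skew) and whose every entry satisfies $|B_{ij}| \le C$.

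The second step is to relate $\Pf(A)$ to $\Pf(B)$. From the standard identity $\Pf(D A D) = \det(D)\,\Pf(A)$ for any skew-symmetric $A$ and any matrix $D$ (here diagonal), and from $\det(D) = \prod_{i=1}^{k} e^{(b-a)x_i}$, one gets
\begin{equation*}
|\Pf(A)| = |\Pf(B)|\,\prod_{i=1}^{k} e^{-(b-a)x_i}.
\end{equation*}
So the bound will follow once we control $|\Pf(B)|$. For that, use $\Pf(B)^2 = \det(B)$ together with Hadamard's inequality: each of the $2k$ rows of $B$ has Euclidean norm at most $C\sqrt{2k}$, hence $|\det(B)| \le (C\sqrt{2k})^{2k} = C^{2k}(2k)^{k}$, so $|\Pf(B)| \le C^{k}(2k)^{k/2}$. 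Combining the two displays gives exactly the claimed bound.

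There is no real obstacle here; the only thing to be a little careful about is that the hypothesis $|\kernel_{12}(x,y)| = |\kernel_{21}(y,x)|$ (which is the skew-symmetry of the kernel at the block level) really does force $B$ to be skew-symmetric so that $\Pf(B)$ makes sense, and that the identity $\Pf(DAD)=\det(D)\Pf(A)$ is used in its diagonal form (where it is immediate from the definition \eqref{eq:defPfaffian} by factoring $D_{ii}D_{jj}$ out of each pair appearing in the Pfaffian sum). Everything else is bookkeeping.
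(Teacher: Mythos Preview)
Your proof is correct. The paper does not give its own proof of this lemma but simply cites it from \cite{baik2016facilitated}; your argument via the diagonal conjugation $B=DAD$, the identity $\Pf(D^{T}AD)=\det(D)\Pf(A)$, and Hadamard's inequality on the $2k\times 2k$ matrix $B$ is exactly the standard route and matches what one finds in the cited reference.
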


\subsection{Pfaffian Schur measure correlation kernel}
Consider a Pfaffian Schur measure with parameters  $(a_1, \dots, a_n) \in (0,1)^n $, i.e. a measure of the form
$$\PSM(\la) = \prod_{i<j}(1-a_ia_j) s_{\la}(a_1, \dots, a_n) \mathds{1}_{\lambda' \  \mathrm{even}}, \ \ \ \la\in \Y.$$
We know from Theorem 3.3 in \cite{borodin2005eynard} that the random point configuration $\Lambda:=\lbrace \la_i-i\rbrace_{i \in \Z_{>0}}$ generates a Pfaffian point process on $\Z$.  This means that for $y_1, \dots, y_k\in \Z$,
$$ \PP\left( \lbrace y_1, \dots, y_k\rbrace \subset \Lambda \right) = \Pf\big[\kernel(y_i, y_j)\big]_{i,j=1}^k, $$
 where $\kernel( u,  v)$ is a $2 \times 2$ matrix-valued skew symmetric kernel
  $$ \kernel(u,v)= \begin{pmatrix}
  \kernel_{11}(u,v) & \kernel_{12}(u,v) \\
  \kernel_{21}(u,v) & \kernel_{22}(u,v)
\end{pmatrix}, \ \ u,v\in \Z, $$
with $\kernel_{12} = -\big(\kernel_{21}\big)^T$. We refer to \cite[Section 4.1]{baik2017pfaffian} for general background on Pfaffian point processes.  For the Pfaffian Schur process,  the kernel is   \cite[Theorem 3.3]{borodin2005eynard}
\begin{align*}
\kernel^{\mathrm{Schur}}_{11}(u,v)&= \frac{1}{(2\I\pi)^2} \iint \frac{z-w}{(z^2-1)(w^2-1)(zw-1)}
\prod_{j=1}^n\left( \frac{(z-a_j)(w-a_j)}{zw(1-a_jz)(1-a_jw)} \right) \frac{\mathrm{d}z}{z^u}\frac{\mathrm{d}w}{w^v},\\
\kernel^{\mathrm{Schur}}_{12}(u,v)&=\frac{1}{(2\I\pi)^2} \iint \frac{z-w}{(z^2-1)w(zw-1)}  \prod_{j=1}^n\left( \frac{(z-a_j)(w-a_j)}{zw(1-a_jz)(1-a_jw)} \right) \frac{\mathrm{d}z}{z^u}\frac{\mathrm{d}w}{w^v},\\
\kernel^{\mathrm{Schur}}_{22}(u,v)&= \frac{1}{(2\I\pi)^2} \iint \frac{z-w}{zw(zw-1)}\prod_{j=1}^n\left( \frac{(z-a_j)(w-a_j)}{zw(1-a_jz)(1-a_jw)} \right) \frac{\mathrm{d}z}{z^u}\frac{\mathrm{d}w}{w^v},
\end{align*}
where for $\kernel^{\mathrm{Schur}}_{11}$ and $\kernel^{\mathrm{Schur}}_{12}$, the contours for $z$ and $w$ are positively oriented circles around zero with radius between $1$ and the $\min \lbrace a_j^{-1}\rbrace $ (so that in particular $\vert z w\vert>1$ along these contours), while for $\kernel^{\mathrm{Schur}}_{22}$ the contours for $z$ and $w$ are positively oriented circles around zero with radius smaller than $1$ (so that $\vert zw\vert <1$ along these contours).

Notice that $-\ell(\la)$ is the leftmost hole in the point process $\Lambda$ (or leftmost point in $\Lambda^\complement:=\Z\setminus \Lambda$). The correlation kernel of $\Lambda^\complement$ is $\kernel '= \mathsf{J}-\kernel^{\rm Schur}$.
Indeed, the probability that a set $Y=\lbrace y_1, \dots, y_k\rbrace $ is included in $\Lambda^\complement$ is also the probability that there are no points of $\Lambda$ in $Y$, i.e. the gap probability, which is given by $\Pf[\mathsf{J}-\kernel^{\rm Schur}]_{\ell^2(Y)}$. For a finite set $Y$, the Fredholm Pfaffian on $\ell^2(Y)$ is simply the Pfaffian of the matrix indexed by elements of $Y$, i.e. $\Pf\left[\kernel'(y_i, y_j)\right]_{i,j=1}^{k}$, such that the correlation kernel  of the point process $\Lambda^\complement$ is $\kernel'$ as claimed.
In particular,
$$ \PP\left(-\ell(\la)>x\right) = \Pf\left[ \kernel^{\mathrm{Schur}}\right]_{\ell^2(-\infty, x]} = \Pf\left[ \mathsf{J}-\kernel^\prime\right]_{\ell^2( -\infty, x]}.$$
Shifting the point process by $n$, we obtain that the correlation kernel of $ n+\Lambda^{\complement} $ is $\kernel^{\complement}$ where
\begin{subequations}
\begin{align}
\kernel^{\complement}_{11}(u,v)&= \frac{1}{(2\I\pi)^2} \iint \frac{w-z}{(z^2-1)(w^2-1)(zw-1)}
\prod_{j=1}^n\left( \frac{(z-a_j)(w-a_j)}{(1-a_jz)(1-a_jw)} \right) \frac{\mathrm{d}z}{z^u}\frac{\mathrm{d}w}{w^v},\label{eq:K6V11}\\
\kernel^{\complement}_{12}(u,v)&= \mathds{1}_{u=v} + \frac{1}{(2\I\pi)^2} \iint \frac{w-z}{(z^2-1)w(zw-1)} \prod_{j=1}^n\left( \frac{(z-a_j)(w-a_j)}{(1-a_jz)(1-a_jw)} \right) \frac{\mathrm{d}z}{z^u}\frac{\mathrm{d}w}{w^v},\label{eq:K6V12}\\
\kernel^{\complement}_{22}(u,v)&= \frac{1}{(2\I\pi)^2} \iint \frac{w-z}{zw(zw-1)}\prod_{j=1}^n\left( \frac{(z-a_j)(w-a_j)}{(1-a_jz)(1-a_jw)} \right) \frac{\mathrm{d}z}{z^u}\frac{\mathrm{d}w}{w^v},\label{eq:K6V22}
\end{align}
\end{subequations}
where the contours are as before.
The point process $n+\Lambda^\complement$ is almost surely supported on the nonnegative integers, so that one can compute the Fredholm Pfaffian on $\ell^2[0,x]$ instead of $\ell^2( -\infty, x]$.
Thus, by \cite[Theorem 8.2]{rains2000correlation}, one may write that for any function $\fkernel :\Z_{\geqslant 0}\to \R$,
\begin{equation}
\EPSM\left[ \prod_{\lambda\in n+\Lambda^{\complement}}\left( 1+\fkernel(\lambda) \right) \right] = \Pf\left[ \mathsf{J}+\fkernel \cdot  \kernel^{\complement}  \right]_{\ell^2(\Z_{\geqslant 0})},
\label{eq:multiplicativefunctionalsSchur}
\end{equation}
whenever both sides admit absolutely convergent expansions.
In particular,
$$ \PSM\big(n-\ell(\la)>x\big)  =  \Pf\left[ \mathsf{J}-\kernel^{\complement}\right]_{\ell^2[0, x]}.$$

It will be more convenient to work with integral formulas where the contours  are all circles with radius less than $1$ (because we will later let $a_j\equiv a$ go to $1$ and there is a pole at $1/a$). When deforming the contours inside the unit circle, we pick some residues which yield the following formulas.
\begin{lemma}
Denoting $f(z)= \prod_{j=1}^n \frac{z-a_j}{1-a_jz} $, for $n$ even and $u,v\in \Z_{\geqslant 0}$, we have that
\begin{align*}
\kernel^{\complement}_{11}(u,v)&=  \frac{1}{(2\I\pi)^2} \iint \frac{(w-z)f(z)f(w)}{(z^2-1)(w^2-1)(zw-1)}
 \frac{\mathrm{d}z}{z^u}\frac{\mathrm{d}w}{w^v}
 + \frac{1}{2\I\pi}\int  \frac{f(w)}{w^2-1} \frac{\mathrm{d}w}{w^v}\mathds{1}_{u \in 2\Z}\\
 & \hspace{3cm}  -\frac{1}{2\I\pi}\int  \frac{f(w)}{w^2-1} \frac{\mathrm{d}w}{w^u}\mathds{1}_{v \in 2\Z}  + r(u,v),\\
 \kernel^{\complement}_{12}(u,v)&= \frac{1}{(2\I\pi)^2} \iint \frac{(w-z)f(z)f(w)}{(z^2-1)w(zw-1)}   \frac{\mathrm{d}z}{z^u}\frac{\mathrm{d}w}{w^v}  +  \frac{1}{2\I\pi} \int \frac{f(z)}{z^{v+1}}\mathrm{d}z\mathds{1}_{u\in 2\Z},\\
\kernel^{\complement}_{22}(u,v)&= \frac{1}{(2\I\pi)^2} \iint \frac{(w-z)f(z)f(w)}{zw(zw-1)} \frac{\mathrm{d}z}{z^u}\frac{\mathrm{d}w}{w^v},
\end{align*}
where the contours are all positively oriented circles around $0$ with radius smaller than $1$, and
\begin{equation}
 r(u,v) = \frac{1}{4}\big((-1)^u - (-1)^v\big) +\frac{1}{2}\sgn(v-u)\mathds{1}_{v-u\in 2\Z+1},
 \label{eq:defr}
\end{equation}
with the convention that
$$ \sgn(x) = \begin{cases} 1, &\text{ if }\  x>0, \\  -1,
&\text{ if }\ x<0, \\ 0, &\text{ if }\ x=0.
\end{cases}$$
\label{lem:residuecomputations}
\end{lemma}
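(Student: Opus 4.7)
The plan is to prove the three identities by contour deformation, shrinking the defining circles (radius $>1$ for $\kernel^{\complement}_{11}, \kernel^{\complement}_{12}$) to circles of radius less than $1$, while carefully accounting for the residues picked up along the way. Two elementary identities for the function $f(z) = \prod_{j=1}^n (z-a_j)/(1-a_jz)$ drive the computation: $f(1)=1$ and $f(-1)=(-1)^n=1$ (the second uses the hypothesis that $n$ is even), together with the factor-by-factor relation $f(1/z)\,f(z)=1$.

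For $\kernel^{\complement}_{11}$, first shrink the $z$-contour through $|z|=1$ while keeping $|w|>1$. The poles crossed are $z=\pm 1$ and $z=1/w$. Using $f(\pm 1)=1$, the residues at $z=\pm 1$ sum to $f(w)(1+(-1)^u)/(2(w^2-1)w^v) = f(w)\mathds{1}_{u\in 2\Z}/((w^2-1)w^v)$, producing the first single integral claimed. Using $f(1/w)f(w)=1$, the residue at $z=1/w$ simplifies to $-w^{u-v}/(w^2-1)$. Next, shrink the $w$-contour through $|w|=1$: in the remaining main double integral, the poles at $w=\pm 1$ yield, by the same mechanism, the second single integral (with $z$ taking the role of integration variable); the pole at $w=1/z$ is avoided by choosing $r_z<1/R_w$. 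The single-integral term from $z=\pm 1$ then contributes constants $\mathds{1}_{u\in 2\Z}(1-(-1)^v)/2$ upon crossing $w=\pm 1$. The ``$z=1/w$'' term $-(2\I\pi)^{-1}\oint w^{u-v}/(w^2-1)\,\mathrm{d}w$ contributes $-(1-(-1)^{u-v})/2$ from the residues at $w=\pm 1$, and then on the small contour is nonzero only through the residue at $w=0$, giving $\mathds{1}_{v>u,\,v-u\text{ odd}}$. A straightforward parity check with $a=(-1)^u,\,b=(-1)^v$ verifies that the sum of these three constants is exactly $r(u,v)$ as defined in \eqref{eq:defr}.

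For $\kernel^{\complement}_{12}$, the same shrinking of the $z$-contour produces the single integral $\mathds{1}_{u\in 2\Z}(2\I\pi)^{-1}\oint f(w)/w^{v+1}\,\mathrm{d}w$ from $z=\pm 1$ and a term $-w^{u-v-1}$ from $z=1/w$. Integrating the latter around any contour enclosing only $w=0$ yields $-\mathds{1}_{u=v}$, which cancels the explicit $\mathds{1}_{u=v}$ appearing in \eqref{eq:K6V12}. Further shrinking of the $w$-contour in the main double integral crosses no new poles: the $w$-denominator vanishes only at $w=0$ (enclosed by both large and small contours) and at $w=1/z$ (outside both if the radii are ordered as above). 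For $\kernel^{\complement}_{22}$, the defining contours already have radius less than $1$, so no deformation is needed and the claimed formula coincides with the defining one.

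The main obstacle is essentially bookkeeping: tracking residues at four distinct pole locations ($z=\pm 1,\,z=1/w,\,w=\pm 1,\,w=0$) in the correct order, arranging the radii so that the ``cross-poles'' $z=1/w$ and $w=1/z$ are handled consistently, and finally recognizing that the three apparently ad hoc constant contributions identified above reassemble into the compact expression $r(u,v)$.
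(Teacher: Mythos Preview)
Your proof is correct and follows the same approach as the paper: deform the contours inward, collecting residues at $z=\pm1$, $z=1/w$, and then at $w=\pm1$ (and $w=0$ for the purely rational piece), using $f(\pm1)=1$ and $f(1/w)f(w)=1$. The only cosmetic difference is that for $\kernel^{\complement}_{12}$ the paper shrinks the $w$-contour first (noting no poles are crossed) and then the $z$-contour, whereas you shrink $z$ first; both orderings yield the same cancellation of the $\mathds{1}_{u=v}$ term, and your explicit verification that the three constants assemble into $r(u,v)$ is slightly more detailed than the paper's, which simply lists the seven $(z,w)$ residue pairs contributing to $r(u,v)$.
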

\begin{proof}
Let $\mathcal{C}_{>1}$ be a contour defined by a (positively oriented) circle of radius larger than $1$ but arbitrarily close to $1$, let $\mathcal{C}_{<1}$ be a contour with radius smaller than $1$ but arbitrarily close to $1$, and $\mathcal{C}_{\ll 1}$ a contour with radius arbitrarily close to zero.


In the formula for $\kernel^{\complement}_{11}(u,v)$ in \eqref{eq:K6V11}, the integration variables $z$ and $w$ are such that $\vert z\vert, \vert w\vert, \vert zw\vert>1$.  We may first deform the $z$ contour from $\mathcal{C}_{>1}$ to $\mathcal{C}_{\ll 1}$ thus picking residues at $z=\pm1$ and $z=1/w$. This yields
\begin{multline*}
\kernel^{\complement}_{11}(u,v) =  \frac{1}{(2\I\pi)^2} \int_{\mathcal{C}_{>1}} \mathrm{d}w \int_{\mathcal{C}_{\ll 1}} \mathrm{d}z  \frac{(w-z)f(z)f(w)}{(z^2-1)(w^2-1)(zw-1)}
 \frac{1}{z^u}\frac{1}{w^v}\\
+ \frac{1}{2\I\pi}\int_{\mathcal{C}_{>1}} \mathrm{d}w \frac{1}{2(w^2-1)} \frac{f(w)}{w^v}\Big(f(1)+(-1)^uf(-1) \Big)
+ \int_{\mathcal{C}_{>1}} \mathrm{d}w \frac{1}{1-w^2}\frac{1}{w^{v-u}}.
\end{multline*}
Evaluating the residues at $\pm 1$ in the second integral, and taking into account that $n$ is even so that $f(1)=f(-1)=1$, we find
\begin{multline*}
\kernel^{\complement}_{11}(u,v) =  \frac{1}{(2\I\pi)^2} \int_{\mathcal{C}_{>1}} \mathrm{d}w \int_{\mathcal{C}_{\ll 1}} \mathrm{d}z  \frac{(w-z)f(z)f(w)}{(z^2-1)(w^2-1)(zw-1)}
 \frac{1}{z^u}\frac{1}{w^v}\\
+\frac{1}{2\I\pi} \int_{\mathcal{C}_{\ll 1}} \mathrm{d}w \frac{1}{w^2-1} \frac{f(w)}{w^v}\mathds{1}_{u \in 2\Z} + r(u,v),
\end{multline*}
where $r(u,v)$ is defined in \eqref{eq:defr}.
This term  $r(u,v)$ corresponds to taking residues in the variable $z$ and then  in the variables $w$ in  \eqref{eq:K6V11} for values of $(z,w)$ equal to
$$(1/w,1), (1/w,-1), (1/w, 0), (1, 1), (1, -1), (-1, 1), \text{ and } (-1, -1).$$
One readily checks that the sum of all these residues equals $r(u,v)$.

Deforming the contour for the variable $w$ in the first integral from $\mathcal{C}_{>1}$ to $\mathcal{C}_{\ll 1}$ we pick residues at $\pm 1$ which yield
\begin{multline*}
\kernel^{\complement}_{11}(u,v) =  \frac{1}{(2\I\pi)^2} \int_{\mathcal{C}_{\ll 1}} \mathrm{d}w \int_{\mathcal{C}_{\ll 1}} \mathrm{d}z  \frac{(w-z)f(z)f(w)}{(z^2-1)(w^2-1)(zw-1)}
\frac{1}{z^u}\frac{1}{w^v}\\
 -\frac{1}{2\I\pi}\int_{\mathcal{C}_{\ll 1}} \mathrm{d}w \frac{1}{w^2-1} \frac{f(w)}{w^u}\mathds{1}_{v \in 2\Z} + \frac{1}{2\I\pi}\int_{\mathcal{C}_{\ll 1}} \mathrm{d}w \frac{1}{w^2-1} \frac{f(w)}{w^v}\mathds{1}_{u \in 2\Z} + r(u,v).
\end{multline*}

In the formula for $\kernel^{\complement}_{12}(u,v)$ in \eqref{eq:K6V12}, we may deform the contour for $w$ to $\mathcal{C}_{< 1}$ without picking any residue. Then, deforming the contour for $z$ to $\mathcal{C}_{< 1}$ we pick residues at $z=1/w$ and $z=\pm 1$ which yields
\begin{multline*}
\kernel^{\complement}_{12}(u,v)= \mathds{1}_{u=v} + \frac{1}{(2\I\pi)^2} \int_{\mathcal{C}_{< 1}} \mathrm{d}w \int_{\mathcal{C}_{< 1}} \mathrm{d}z \frac{(w-z)f(z)f(w)}{(z^2-1)w(zw-1)}  \frac{\mathrm{d}z}{z^u}\frac{\mathrm{d}w}{w^v}\\
+\frac{1}{2\I\pi} \int_{\mathcal{C}_{<1}} \mathrm{d}w \frac{-1}{w^{v-u+1}} + \frac{1}{2\I\pi} \int_{\mathcal{C}_{<1}}  \frac{f(w)}{w^{v+1}}\mathds{1}_{u\in 2\Z},
\end{multline*}
which simplifies to
\begin{equation*}
\kernel^{\complement}_{12}(u,v)= \frac{1}{(2\I\pi)^2} \int_{\mathcal{C}_{< 1}} \mathrm{d}w \int_{\mathcal{C}_{< 1}} \mathrm{d}z \frac{(w-z)f(z)f(w)}{(z^2-1)w(zw-1)}   \frac{\mathrm{d}z}{z^u}\frac{\mathrm{d}w}{w^v}
 + \frac{1}{2\I\pi} \int_{\mathcal{C}_{<1}}  \frac{f(w)}{w^{v+1}}\mathds{1}_{u\in 2\Z}.
\end{equation*}
\end{proof}

\subsection{Hall-Littlewood observables}

We can use the relation between Pfaffian Schur and Hall-Littlewood measure \eqref{eq:relationMMSchurbis} to express certain observables of the latter using the correlation kernel of the former.
\begin{proposition} For any $x\in \R$, $n\in 2\Z_{>0}$, and any parameters $(a_1, \dots, a_n)\in(0,1)^n$,
\begin{equation}
\EPHL\left[\frac{1}{(-t^{x+n-\ell(\la)}, t^2)_{\infty}}\right] =  \Pf\left[  \mathsf{J}+\fkernel_x\cdot \kernel^{\complement}\right]_{\ell^2(\Z_{\geqslant 0})},
\label{eq:FredholmHL}
\end{equation}
where
\begin{equation}
 \fkernel_x(j) = \frac{(-t^{x+j+1}; t^2)_{\infty}}{(-t^{x+j}; t^2)_{\infty}}-1.
\label{eq:deffkernel}
\end{equation}
\label{prop:FredholmHL}
\end{proposition}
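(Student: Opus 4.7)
The plan is to combine three ingredients: specialize the refined-Littlewood consequence \eqref{eq:relationMMSchur} at $q=0$, verify a partition-by-partition deterministic identity that matches the resulting Pfaffian Schur expectation with a multiplicative functional of $n+\Lambda^{\complement}$, and conclude via the general formula \eqref{eq:multiplicativefunctionalsSchur}.

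First, I would set $q=0$ in \eqref{eq:relationMMSchur}. Since $q^{\la_{n-i}}=\mathds{1}_{\la_{n-i}=0}$ (with $\la_m=+\infty$ for $m\leqslant 0$), the $i$-th factor on the left equals $1$ when $i<n-\ell(\la)$ and $(1+t^{x+i})^{-1}$ when $i\geqslant n-\ell(\la)$. Because $\PHL$ (with $\rho^-=\varnothing$) is supported on partitions with $\la'$ even (so $\ell(\la)$ is even) and $n$ is even by hypothesis, the remaining product of reciprocals is by definition $1/(-t^{x+n-\ell(\la)};t^2)_\infty$, yielding
\begin{equation*}
\EPHL\!\left[\frac{1}{(-t^{x+n-\ell(\la)};t^2)_\infty}\right]=\EPSM\!\left[\prod_{i\in 2\Z_{\geqslant 0}}\frac{1+t^{x+\la_{n-i}+i}}{1+t^{x+i}}\right].
\end{equation*}

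Second, for every partition $\la$ with $\la'$ even and $\ell(\la)=L\leqslant n$, I would prove the deterministic identity
\begin{equation*}
\prod_{i\in 2\Z_{\geqslant 0}}\frac{1+t^{x+\la_{n-i}+i}}{1+t^{x+i}}=\prod_{\mu\in n+\Lambda^{\complement}}\frac{(-t^{x+\mu+1};t^2)_\infty}{(-t^{x+\mu};t^2)_\infty}.
\end{equation*}
Using the elementary relation $1+t^a=(-t^a;t^2)_\infty/(-t^{a+2};t^2)_\infty$, the left side collapses (after pairing $i=n-2j$ and setting $c_j:=\la_{2j}$) to $(-t^{x+n-L};t^2)_\infty^{-1}\prod_{j=1}^{L/2}(-t^{x+n+c_j-2j};t^2)_\infty/(-t^{x+n+c_j-2j+2};t^2)_\infty$. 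For the right side, I would use the disjoint decomposition $\Z_{\geqslant 0}=(n+\Lambda^{\complement})\sqcup\bigl(\{0,\dots,n-L-1\}\cup\{n+\la_k-k:1\leqslant k\leqslant L\}\bigr)$ together with the global telescope $\prod_{\mu\geqslant 0}(-t^{x+\mu+1};t^2)_\infty/(-t^{x+\mu};t^2)_\infty=1/(-t^x;t^2)_\infty$; dividing out the complementary product gives $(-t^{x+n-L};t^2)_\infty^{-1}\prod_{k=1}^{L}(-t^{x+n+\la_k-k};t^2)_\infty/(-t^{x+n+\la_k-k+1};t^2)_\infty$. The two forms agree because $\la'$ even forces $\la_{2j-1}=\la_{2j}=c_j$, so consecutive pairs $k=2j-1,2j$ in the $L$-factor product telescope into single factors that reproduce the left-side expression.

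Third, the right side of the deterministic identity is precisely $\prod_{\mu\in n+\Lambda^{\complement}}(1+\fkernel_x(\mu))$ by the definition \eqref{eq:deffkernel} of $\fkernel_x$. Applying \eqref{eq:multiplicativefunctionalsSchur} then converts the Pfaffian Schur expectation into $\Pf[\mathsf{J}+\fkernel_x\cdot\kernel^{\complement}]_{\ell^2(\Z_{\geqslant 0})}$. Absolute convergence of the Pfaffian expansion is automatic since $\fkernel_x(\mu)=O(t^{x+\mu})$ as $\mu\to\infty$, which combined with the kernel bounds of Lemma~\ref{lem:hadamard} controls all terms.

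The only nontrivial step is the deterministic identity, and its essence is the pairing argument enabled by $\la'$ even: the ``even leg'' constraint hard-coded into the half-space Hall--Littlewood and Pfaffian Schur measures is exactly what merges consecutive Maya-diagram factors into single factors, which is structurally why the refined Littlewood identity yields a Pfaffian rather than a determinantal answer.
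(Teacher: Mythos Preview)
Your proof is correct and follows essentially the same three-step structure as the paper: specialize the refined Littlewood relation at $q=0$, convert the Pfaffian Schur expectation into a multiplicative functional over $n+\Lambda^{\complement}$, and apply \eqref{eq:multiplicativefunctionalsSchur}. The only stylistic difference is in the middle step: the paper introduces $P(x)=\prod_{j\in J}(1+t^{x+j})$ with $J=\{\la_i-i+n:i\text{ even}\}$, observes that $J\sqcup(J+1)=(n+\Lambda)\cap\Z_{\geqslant 0}$ (equivalent to your pairing $\la_{2j-1}=\la_{2j}$), and uses the functional equation $P(x)P(x+1)=\prod_{j\in(n+\Lambda)\cap\Z_{\geqslant 0}}(1+t^{x+j})$ to solve for $P(x)$ as an infinite product of $q$-Pochhammer ratios; you instead expand both sides of the deterministic identity directly and telescope. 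These are the same argument in different clothing---your explicit telescoping is the unwinding of the paper's $P(x)P(x+1)$ trick.
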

Before proving this proposition, it is worth noticing that $\fkernel_x(j)\in(-1,0)$ and $\fkernel_x(j)$ is increasing in $x$ and $j$. This will be of importance later when we consider scaling limits.
\begin{proof}
Letting $u=-t^x$ and $q=0$ in identity  \eqref{eq:relationMMSchurbis} implies that
\begin{equation}
\EPHL\left[\prod_{i\in 2\Z\cap[0, n-\ell(\la)-2]}  \left(1+t^{x+i}\right)\right] =  \EPSM\left[ \prod_{i\in 2\Z\cap [0,n]} \left(1+t^{x+\la_i-i+n}\right)\right].
\label{eq:relationHLSchur}
\end{equation}
The left-hand-side of \eqref{eq:relationHLSchur} can be rewritten
$$  (-t^x; t^2)_{\infty}\ \  \EPHL\left[\frac{1}{(-t^{x+n-\ell(\la)}, t^2)_{\infty}}\right],$$
and  the right-hand-side can be rewritten  as
$$ \EPSM\left[  \prod_{j\in J} \left( 1+t^{x+j} \right)\right], $$
where  $J= \lbrace \la_i-i+n\rbrace_{i\in 2\Z\cup[0,n]}$. Since $\la_1=\la_2, \la_3=\la_4, \dots$ under the Pfaffian Schur measure that we consider, the set $J$ is characterized by  $J \sqcup (J+1) = (n+\Lambda)\cap \Z_{\geqslant 0}$, where $\sqcup$ denotes the union  of disjoint sets and $\Lambda = \lbrace \la_i-i \rbrace_{i\in \Z_{>0}}$ as before. Let us call $P(x)$ the product inside the last expectation. We have
$$ P(x)P(x+1) = \prod_{j\in (n+\Lambda)\cap \Z_{\geqslant 0} }(1+t^{x+j})=:N(x).$$
This implies that
$$ P(x)  = \frac{N(x)N(x+2)N(x+4)\dots}{N(x+1)N(x+3)N(x+5)\dots} = \prod_{j\in (n+\Lambda)\cap \Z_{\geqslant 0}} \frac{(-t^{x+j}; t^2)_{\infty}}{(-t^{x+j+1}; t^2)_{\infty}}.$$
Thus we have shown that
\begin{equation}
\EPSM\left[ \prod_{i\in 2\Z\cap [0,n]} \left(1+t^{x+\la_i-i+n}\right)\right] = \EPSM\left[ \prod_{j\in (n+\Lambda)\cap \Z_{\geqslant 0}} \frac{(-t^{x+j}; t^2)_{\infty}}{(-t^{x+j+1}; t^2)_{\infty}}\right].
\label{eq:todivide}
\end{equation}
Notice the simplification
$$ \prod_{j\in \Z_{\geqslant 0}}\frac{(-t^{x+j}; t^2)_{\infty}}{(-t^{x+j+1}; t^2)_{\infty}}  = (-t^x; t^2)_{\infty}, $$
so that dividing  both sides of \eqref{eq:todivide} by that quantity,
$$\frac{1}{ (-t^x; t^2)_{\infty}} \EPSM\left[ \prod_{i\in 2\Z\cap [0,n]}\left( 1+t^{x+\la_i-i+n}\right)\right] = \EPSM\left[ \prod_{j\in n+\Lambda^{\complement}}  \frac{(-t^{x+j+1}; t^2)_{\infty}}{(-t^{x+j}; t^2)_{\infty}} \right].$$
At this point we have shown the following relation between observables of the half-space Schur and Hall-Littlewood processes:
$$  \EPHL\left[\frac{1}{(-t^{x+n-\ell(\la)}, t^2)_{\infty}}\right]  =  \EPSM\left[ \prod_{j\in n+\Lambda^{\complement}}  \frac{(-t^{x+j+1}; t^2)_{\infty}}{(-t^{x+j}; t^2)_{\infty}} \right].$$
To complete the proof, we note that the multiplicative functional of the Pfaffian Schur measure on the right-hand-side can be computed using \eqref{eq:multiplicativefunctionalsSchur} as in the statement of the proposition.
\end{proof}

\section{Half-space Hall-Littlewood measures and stochastic six-vertex model in a half-quadrant}
\label{sec:6v}

\renewcommand{\leq}{\leqslant}
\renewcommand{\geq}{\geqslant}
\colorlet{lgray}{white!85!black}
\colorlet{lred}{white!65!red}

\newcommand{\bra}[1]{\left\langle #1\right|}
\newcommand{\ket}[1]{\left|#1\right\rangle}
\renewcommand{\tikz}[2]{\begin{tikzpicture}[scale=#1,baseline=(current bounding box.center),>=stealth] #2 \end{tikzpicture}}
\renewcommand{\gather}[1]{\begin{gathered} #1 \end{gathered}}
We will define a measure on lattice paths as in Figure \ref{fig:example6v}. There are two types of vertices, bulk and corner vertices.
\subsection{Definition of the model}
\label{sec:def6v} Consider the square lattice.
A {\it bulk vertex} is the crossing of a horizontal and a vertical line, as well as the four edges which surround the point of intersection. We refer to the four edges which constitute a vertex as its north, east, south and west edges, following standard compass orientation. Bulk vertices have the following generic form:

\begin{align}
\label{eq:vert}
\begin{tikzpicture}[scale=1.2,baseline=0]
\draw[dotted] (-1,0) -- node[circle,scale=0.7,fill=gray] {\color{white} $i_1$} (0,0) -- node[circle,scale=0.7,fill=gray] {\color{white} $j_1$} (1,0);
\draw[dotted] (0,-1) -- node[circle,scale=0.7,fill=gray] {\color{white} $i_2$} (0,0) -- node[circle,scale=0.7,fill=gray] {\color{white} $j_2$} (0,1);
\node[left] at (-1,0) {$a_y$};
\node[below] at (0,-1) {$a_x$};
\end{tikzpicture}\ ,
\quad\quad
\{i_1,i_2,j_1,j_2\} \in \{0,1\}.
\end{align}
The four indices placed around the vertex represent its {\it edge states.} We refer to $i_1,i_2$ as {\it incoming} states, while $j_1,j_2$ are called {\it outgoing.} These indices take values in $\{0,1\} = \{\text{empty},\text{occupied}\}$. Whenever an edge state is equal to $1$ we draw an up-oriented or right-oriented {\it path} on that edge, but leave the edge empty if its state is equal to $0$. A vertex centered at position $(x,y)$ also has two parameters $a_x,a_y$ associated to it and is assigned a Boltzmann weight that depends on its edge states and which is a function of the product  $a_x a_y$. Of the sixteen possible edge state configurations, only six receive non-zero Boltzmann weights: these are precisely the six that exhibit conservation of paths passing through the vertex. We list their weights below:
\begin{align}
\label{eq:6v}
\begin{array}{cccccc}
\tikz{0.7}{
\draw[dotted] (-1,0) -- (1,0);
\draw[dotted] (0,-1) -- (0,1);
\node[left] at (-1,0) {$a_y$};
\node[below] at (0,-1) {$a_x$};
}
&
\tikz{0.7}{
\draw[path] (-1,0) -- (1,0);
\draw[path] (0,-1) -- (0,1);
\node[left] at (-1,0) {$a_y$};
\node[below] at (0,-1) {$a_x$};
}
&
\tikz{0.7}{
\draw[path] (-1,0) -- (1,0);
\draw[dotted] (0,-1) -- (0,1);
\node[left] at (-1,0) {$a_y$};
\node[below] at (0,-1) {$a_x$};
}
&
\tikz{0.7}{
\draw[dotted] (-1,0) -- (1,0);
\draw[path] (0,-1) -- (0,1);
\node[left] at (-1,0) {$a_y$};
\node[below] at (0,-1) {$a_x$};
}
&
\tikz{0.7}{
\draw[path] (-1,0) -- (0,0) -- (0,1);
\draw[dotted] (0,-1) -- (0,0) -- (1,0);
\node[left] at (-1,0) {$a_y$};
\node[below] at (0,-1) {$a_x$};
}
&
\tikz{0.7}{
\draw[dotted] (-1,0) -- (0,0) -- (0,1);
\draw[path] (0,-1) -- (0,0) -- (1,0);
\node[left] at (-1,0) {$a_y$};
\node[below] at (0,-1) {$a_x$};
}
\\ \\
\ \ \ 1
&
\ \ \ 1
&
\ \ \ \dfrac{1-a_xa_y}{1-ta_xa_y}
&
\ \ \ \dfrac{t(1-a_xa_y)}{1-ta_xa_y}
&
\ \ \ \dfrac{(1-t)a_xa_y}{1-ta_xa_y}
&
\ \ \ \dfrac{1-t}{1-ta_xa_y}
\end{array}
\end{align}
The Boltzmann weights are stochastic in the following sense:
\begin{proposition}[Stochasticity]\label{prop:stoch}
Let $w_{a_x a_y} (i_1,i_2 ; j_1,j_2)$ be the weight of the vertex in \eqref{eq:vert}. The parameters $a_x,a_y,t$ can be chosen such that $0 \leq w_{a_x a_y}(i_1,i_2 ; j_1,j_2) \leq 1$ for all $\{i_1,i_2,j_1,j_2\}$, and for any fixed $i_1,i_2$, we have
\begin{align}
\label{eq:prob1}
\sum_{j_1,j_2 \in \{0,1\}}
w_{a_x a_y}(i_1,i_2 ; j_1,j_2)
\equiv
1.
\end{align}
\end{proposition}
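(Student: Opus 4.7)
The plan is to prove the proposition by direct case analysis over the four possible values of the incoming pair $(i_1,i_2) \in \{0,1\}^2$, exploiting the fact that only the six path-conserving configurations carry nonzero weight.

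First I would verify the summation identity \eqref{eq:prob1}. When $(i_1,i_2) = (0,0)$ the only path-conserving outgoing configuration is $(j_1,j_2) = (0,0)$ with weight $1$, and similarly when $(i_1,i_2) = (1,1)$ the only conserving outgoing configuration is $(j_1,j_2)=(1,1)$ with weight $1$; both trivially sum to $1$. The two nontrivial cases are $(i_1,i_2)=(1,0)$ and $(i_1,i_2)=(0,1)$. In the first, the two allowed outgoing states $(1,0)$ (horizontal straight) and $(0,1)$ (turn up) contribute
\begin{equation*}
\frac{1-a_xa_y}{1-ta_xa_y} + \frac{(1-t)a_xa_y}{1-ta_xa_y} = \frac{1-ta_xa_y}{1-ta_xa_y} = 1,
\end{equation*}
and in the second, the allowed outgoing states $(0,1)$ (vertical straight) and $(1,0)$ (turn right) contribute
\begin{equation*}
\frac{t(1-a_xa_y)}{1-ta_xa_y} + \frac{1-t}{1-ta_xa_y} = \frac{1-ta_xa_y}{1-ta_xa_y} = 1,
\end{equation*}
which is the desired identity.

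For the nonnegativity statement, I would observe that under the standing assumption $t \in [0,1)$ made throughout the paper, together with $a_x, a_y \in [0,1)$ (so that in particular $a_x a_y \in [0,1)$ and $t a_x a_y \in [0,1)$), each of the six explicit weights listed in \eqref{eq:6v} is manifestly a ratio of nonnegative numbers with positive denominator, hence lies in $[0,+\infty)$. Combined with the fact just proved that all such weights sum to $1$ for each fixed $(i_1,i_2)$, this forces each weight to lie in $[0,1]$. The constant weights $1$ attached to the fully empty and fully occupied vertices are of course already in $[0,1]$.

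There is essentially no obstacle here: the proposition is a straightforward verification, with the only mild subtlety being to spell out the admissible parameter range that guarantees positivity of both numerator and denominator simultaneously. Since the algebraic cancellation in the numerator of the sum (the $a_xa_y$ terms cancelling against each other, and the $t$ terms against the $(1-t)$ term) is what produces the stochasticity, I would present the two nontrivial case computations as displayed equations and leave the remaining sign/range check to a single sentence.
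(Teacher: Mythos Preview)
Your proof is correct and follows essentially the same approach as the paper's own proof, which simply states the parameter range $0 \leq a_x,a_y,t < 1$ for nonnegativity and remarks that the four cases of \eqref{eq:prob1} can be checked directly from the weights in \eqref{eq:6v}. You have merely written out those case computations explicitly.
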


\begin{proof}
The first property holds if we assume that $a_x,a_y,t$ are real and satisfy $0 \leq a_x,a_y,t < 1$. The four cases $\{i_1,i_2\} = \{0,0\}, \{1,1\}, \{0,1\}, \{1,0\}$ of \eqref{eq:prob1} can be easily checked, using the vertex weights \eqref{eq:6v}.
\end{proof}

\begin{figure}
\begin{tabular}{cc}
\begin{tikzpicture}[scale=0.7]
\foreach \x in {1, ..., 7} {
\draw[dotted] (0,\x) -- (\x,\x) -- (\x, 8);
\draw[gray] (\x, \x-1) node{$\x$};
\draw[gray] (-1,\x) node{$\x$};
}
\end{tikzpicture}
\quad
&
\quad
\begin{tikzpicture}[scale=0.7]
\foreach \x in {1, ..., 7} {
\draw[dotted] (0,\x) -- (\x,\x) -- (\x, 8);
\draw[path] (0,\x) --(0.1,\x);
\draw[gray] (\x, \x-1) node{$\x$};
\draw[gray] (-1,\x) node{$\x$};
}
\draw[path] (0,1) --(1,1);
\draw[path] (0,2) --(1,2) -- (1,3) -- (2,3) -- (2,4) -- (4,4);
\draw[path] (0,3) --(1,3) -- (1,4) -- (2,4) -- (2,5) -- (3,5) -- (3,6) -- (4,6) -- (5,6) -- (5,7)--(7,7) ;
\draw[path] (2,2) -- (2,3) -- (3,3);
\draw[path] (5,5) -- (5,6) -- (6,6);
\draw[path] (0,4) --(1,4) -- (1,5) -- (2,5) -- (2,6) -- (3,6) -- (3,7) -- (5,7) -- (5,8);
\draw[path] (0,5) --(1,5) -- (1,6) -- (2,6) -- (2,7) -- (3,7) -- (3,8);
\draw[path] (0,6) --(1,6) -- (1,7) -- (2,7) -- (2,8) ;
\draw[path] (0,7) --(1,7) -- (1,8)  ;
\draw[path] (2,2)--(2,2.1);
\draw[path] (5,5)--(5,5.1);
\end{tikzpicture}
\end{tabular}
\caption{Left panel: the half-quadrant. Right panel: sample configuration $\mathcal{C}$ of the stochastic six-vertex model in the half-quadrant, for which $\mathfrak{h}(7,7)=4$. The seventh path string is given by $\sigma_7(\mathcal{C})=(1,1,1,0,1,0,0)$.}
\label{fig:half_quad}
\end{figure}
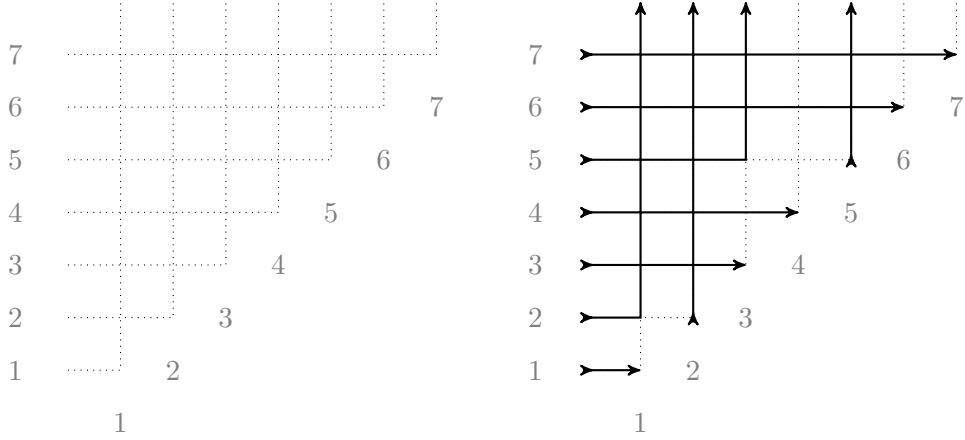

A {\it corner vertex} is a vertex formed by the union of a north and west edge (with the omission of south and east edges). They have the following form:
\begin{align}
\label{eq:corner_vert}
\begin{tikzpicture}[scale=1.2,baseline=0]
\draw[dotted] (-1,0) -- node[circle,scale=0.7,fill=gray] {\color{white} $i$} (0,0) -- node[circle,scale=0.7,fill=gray] {\color{white} $j$} (0,1);
\node[left] at (-1,0) {$a_x$};
\node[below] at (0,-0.5) {$a_x$};
\end{tikzpicture}\ ,
\quad\quad
\{i,j\} \in \{0,1\}.
\end{align}
There are two types of corner vertices to which we assign Boltzmann weight $1$: those in which a path enters from the left and is absorbed at the center of the vertex, with no path emerging from the top; and those in which no path enters from the left, with a path being created at the center of the vertex and then emerging from the top. The remaining two corner vertex configurations are disallowed and receive a weight of $0$:
\begin{align}
\label{eq:corner}
\begin{array}{cccc}
\tikz{0.7}{
\draw[dotted] (-1,0) -- (0,0);
\draw[path] (0,0) -- (0,1);
\node[left] at (-1,0) {$a_x$};
\node[below] at (0,-0.5) {$a_x$};
}
\quad&\quad
\tikz{0.7}{
\draw[path] (-1,0) -- (0,0);
\draw[dotted] (0,0) -- (0,1);
\node[left] at (-1,0) {$a_x$};
\node[below] at (0,-0.5) {$a_x$};
}
\quad&\quad
\tikz{0.7}{
\draw[dotted] (-1,0) -- (0,0);
\draw[dotted] (0,0) -- (0,1);
\node[left] at (-1,0) {$a_x$};
\node[below] at (0,-0.5) {$a_x$};
}
\quad&\quad
\tikz{0.7}{
\draw[path] (-1,0) -- (0,0);
\draw[path] (0,0) -- (0,1);
\node[left] at (-1,0) {$a_x$};
\node[below] at (0,-0.5) {$a_x$};
}
\\ \\
\ \ \ 1
\quad&\quad
\ \ \ 1
\quad&\quad
\ \ \ 0
\quad&\quad
\ \ \ 0
\end{array}
\end{align}
In contrast to bulk vertices, the Boltzmann weight assigned to a corner vertex is independent of the parameter $a_x$ which is attached to it.

The Boltzmann weights are chosen as in \eqref{eq:6v} and \eqref{eq:corner} because they satisfy the Yang--Baxter equation (bulk vertices) and a boundary Yang--Baxter or reflection equation (corner vertices).
\begin{proposition}[Yang--Baxter equation]
For any fixed $0 \leq i_1,i_2,i_3,j_1,j_2,j_3 \leq 1$, we have
\begin{multline*}
\sum_{0 \leq k_1,k_2,k_3 \leq 1}\
w_{a_z/a_y}(i_1,i_2 ; k_1,k_2)
w_{a_x a_z}(k_1,i_3 ; j_1,k_3)
w_{a_x a_y}(k_2,k_3 ; j_2,j_3)
\\
=
\sum_{0 \leq k_1,k_2,k_3 \leq 1}\
w_{a_x a_y}(i_2,i_3 ; k_2,k_3)
w_{a_x a_z}(i_1,k_3 ; k_1,j_3)
w_{a_z/a_y}(k_1,k_2 ; j_1,j_2),
\end{multline*}
where $a_x,a_y,a_z$ are three arbitrary parameters ($w_{a_z/a_y}$ is obtained from $w_{a_xa_y}$ by substituting $a_z/a_y$ in place of $a_xa_y$ in \eqref{eq:6v}). 
\label{prop:YangBaxtersimple}
\end{proposition}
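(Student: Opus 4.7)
My plan is a direct verification by case analysis, exploiting path conservation. Because every nonzero weight in \eqref{eq:6v} preserves the number of paths through a vertex, both sides of the Yang--Baxter identity vanish unless
\[
N := i_1 + i_2 + i_3 = j_1 + j_2 + j_3,
\]
so only configurations with $N\in\{0,1,2,3\}$ contribute. This reduces the a priori $2^6=64$ boundary choices to the $\binom{3}{0}^2+\binom{3}{1}^2+\binom{3}{2}^2+\binom{3}{3}^2=20$ with a chance of contributing.

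The extreme sectors $N=0$ and $N=3$ are immediate. For $N=0$, the only admissible internal configuration on either side has $k_1=k_2=k_3=0$, and both sides equal the product of three empty-vertex weights, namely $1\cdot 1\cdot 1=1$. For $N=3$, each side reduces analogously to a product of three full-vertex weights, all equal to $1$. Note that in either extreme sector, path conservation forces $(i_1,i_2,i_3)=(j_1,j_2,j_3)$, so no cross-checking between distinct $j$-configurations is required.

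The sectors $N=1$ and $N=2$ comprise the main content: each contains $9$ admissible external configurations, and in each case the internal sum on either side runs over at most three triples $(k_1,k_2,k_3)$ satisfying the local conservation constraints at each of the three vertices. Both sides become rational functions of $a_x,a_y,a_z,t$ with common denominator $(1-ta_xa_y)(1-ta_xa_z)(1-ta_z/a_y)$; after clearing, the Yang--Baxter identity reduces to a polynomial identity that can be verified case by case by direct expansion.

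The main obstacle is therefore not conceptual but purely the bookkeeping for the $18$ nontrivial cases, particularly those with ``turn'' configurations where three internal terms must cancel against one another. A cleaner alternative is to observe that the weights in \eqref{eq:6v} are the well-known stochastic gauge of the trigonometric six-vertex $R$-matrix (obtained by multiplying each weight by an entry-dependent factor chosen so that \eqref{eq:prob1} holds). Because such a gauge transformation acts as a change of normalization compatible with the braiding, Proposition \ref{prop:YangBaxtersimple} is equivalent to the classical Yang--Baxter equation for the trigonometric six-vertex $R$-matrix, which is a standard result. Either route (direct polynomial verification, or invocation of the known six-vertex YBE plus a gauge argument) completes the proof.
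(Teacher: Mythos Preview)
Your proposal is correct and takes essentially the same approach as the paper: the paper simply notes that this is a classical result in statistical mechanics (citing Baxter) and that it can alternatively be checked by direct computation of the $2^6$ cases. Your write-up elaborates on how that direct computation is organized via path conservation and mentions the gauge-equivalence to the standard trigonometric six-vertex $R$-matrix, which is precisely the content of the paper's citation.
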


\begin{proof}
This is a classical result in statistical mechanics \cite{baxter1982exactly}. It can also be checked by direct computation, although in this case there are $2^6$ individual equations to verify.
\end{proof}
\begin{proposition}[Reflection equation]
Let the weight of the corner vertex in \eqref{eq:corner_vert} be denoted by $c(i;j) = \delta_{i,1-j}$.
For any fixed $0 \leq i_1,i_2,j_1,j_2 \leq 1$, we have
\begin{multline}
\label{eq:reflect}
\sum_{0 \leq k_1,k_2,\ell_1,\ell_2 \leq 1}
w_{a_y/a_x}(i_1,i_2;k_1,k_2) c(k_1;\ell_1)
w_{a_x a_y}(k_2,\ell_1;\ell_2,j_1) c(\ell_2;j_2)
\\
=
\sum_{0 \leq k_1,k_2,\ell_1,\ell_2 \leq 1}
c(i_2;k_2) w_{a_x a_y}(i_1,k_2;k_1,\ell_2)
c(k_1;\ell_1) w_{a_y/a_x}(\ell_2,\ell_1;j_2,j_1),
\end{multline}
where $a_x,a_y$ are two arbitrary parameters ($w_{a_y/a_x}$ is obtained from $w_{a_xa_y}$ by substituting $a_y/a_x$ in place of $a_xa_y$ in \eqref{eq:6v}). 
\label{prop:boundaryYangBaxtersimple}
\end{proposition}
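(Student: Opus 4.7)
The plan is to reduce the identity to a finite case check. The key observation is that the corner weight $c(i;j) = \delta_{i,1-j}$ is a Dirac delta, so each summation over $(k_1,k_2,\ell_1,\ell_2)$ on either side collapses to at most one nonzero term for every fixed boundary tuple $(i_1,i_2,j_1,j_2) \in \{0,1\}^4$.

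First I would exploit the corner constraints to eliminate two of the four internal indices on each side. On the LHS, $c(k_1;\ell_1)$ and $c(\ell_2;j_2)$ force $\ell_1 = 1-k_1$ and $\ell_2 = 1-j_2$; path conservation at the two bulk vertices (only the six configurations in the table have nonzero weight) then forces
\[
k_1 = \tfrac{1}{2}(i_1+i_2-j_1+j_2), \qquad k_2 = \tfrac{1}{2}(i_1+i_2+j_1-j_2).
\]
On the RHS, $c(i_2;k_2)$ and $c(k_1;\ell_1)$ force $k_2 = 1-i_2$ and $\ell_1 = 1-k_1$; conservation at V1 and V2 then forces
\[
k_1 = \tfrac{1}{2}(i_1-i_2+2-j_1-j_2), \qquad \ell_2 = i_1+1-i_2-k_1.
\]
Both sides vanish automatically whenever the forced internal indices fail to lie in $\{0,1\}$, and in particular whenever the two sides'' parity constraints exclude the boundary tuple. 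A short case inspection shows the set of $(i_1,i_2,j_1,j_2)$ for which either side is nonzero coincides on LHS and RHS.

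Second, for each of the remaining (at most) boundary tuples, I would write down the product of the two bulk weights -- one evaluated at $u = a_y/a_x$ and the other at $u = a_x a_y$ -- using the six-vertex weight table, and check that the two resulting rational expressions in $(a_x,a_y,t)$ coincide. The computation is entirely parallel to the case-by-case verification of the ordinary Yang--Baxter equation in Proposition~\ref{prop:YangBaxtersimple}; most cases are either trivial (all weights equal $1$), related by the symmetry $(i_1,j_1)\leftrightarrow(i_2,j_2)$ combined with $a_x \leftrightarrow a_y$, or reduce to a single rational identity of the form $\tfrac{(1-t)a_y}{a_x - ta_y}$ appearing on both sides.

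The main obstacle is not conceptual but bookkeeping: there are in principle $2^4$ boundary tuples and six possible weight patterns for each bulk vertex, leading to a moderately long list of identities to verify. An alternative, more structural route would be to interpret the LHS and RHS as partition functions of two lattice configurations differing by the passage of a spectral line through a corner, and to reduce the reflection equation to a minimal ``1-corner, 1-bulk'' check using Proposition~\ref{prop:YangBaxtersimple} (the bulk Yang--Baxter equation); however, even this route bottoms out in a finite manual verification at the corner, so the direct approach is the most efficient.
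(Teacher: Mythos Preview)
Your proposal is correct and follows essentially the same approach as the paper: use $c(i;j)=\delta_{i,1-j}$ to eliminate two of the four internal summation indices on each side, then reduce to a finite check over the sixteen boundary tuples $(i_1,i_2,j_1,j_2)$. Your additional observation that path conservation at the two bulk vertices pins down the remaining internal indices uniquely (so each side collapses to at most one term rather than a sum over $k_1,k_2$) is a helpful refinement of the bookkeeping, but the strategy is the same.
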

\begin{proof}
This result is due to Kuperberg \cite{kuperberg2002symmetry, sklyanin1988boundary}. One can eliminate redundant sums in \eqref{eq:reflect}, using the fact that $c(i;j) = \delta_{i,1-j}$. This gives
\begin{multline*}
\sum_{0 \leq k_1,k_2 \leq 1}
w_{a_y/a_x}(i_1,i_2;k_1,k_2) w_{a_x a_y}(k_2,1-k_1;1-j_2,j_1)
\\
=
\sum_{0 \leq \ell_1,\ell_2 \leq 1}
w_{a_x a_y}(i_1,1-i_2;1-\ell_1,\ell_2) w_{a_y/a_x}(\ell_2,\ell_1;j_2,j_1),
\end{multline*}
which can then be checked for the sixteen possible values of $\{i_1,i,_2,j_1,j_2\}$.
\end{proof}
Propositions \ref{prop:YangBaxtersimple} and \ref{prop:boundaryYangBaxtersimple} are the reasons behind the integrability of the stochastic six-vertex model and its symmetries, and this is why we mention them. We will not use Propositions \ref{prop:YangBaxtersimple} and \ref{prop:boundaryYangBaxtersimple} explicitly in the following, because these results are superseded by Propositions \ref{prop:graphicalCauchy} and \ref{prop:gzrelation} below, which apply to a slightly more general model.
\subsection{Markov process on the half-quadrant}

Let us consider the following subset of $\mathbb{Z}^2$:
\begin{align*}
\lbrace (x,y)\in \Z_{>0}^2: x\leqslant y \rbrace.
\end{align*}
We refer to this as the {\it half-quadrant.} There is a bulk vertex at each point $(x,y) \in \mathbb{Z}_{>0}^2$ such that $x<y$, while the points $(x,x) \in \mathbb{Z}_{>0}^2$ are occupied by corner vertices (see the left panel of Figure \ref{fig:half_quad}). We now study a discrete time Markov process of up-right paths on the half-quadrant. It is defined inductively as follows:
\begin{itemize}
\item Let there be an incoming path on the west edge of each vertex at $(1,j)$, $j \in \mathbb{Z}_{\geq 1}$.
\item Assume that for some $n \geq 2$, the incoming edge states of the vertices
$\{(x,y)\}_{x+y=n}$ are all determined. Choose the outgoing edge states of these vertices by sampling from the Bernoulli distribution imposed by the vertex weights \eqref{eq:6v}, \eqref{eq:corner}:
\begin{align*}
\PP\left(\pathrr\right) = \frac{1-a_xa_y}{1-ta_xa_y},
\ \ \PP\left(\pathru\right) = \frac{(1-t)a_xa_y}{1-ta_xa_y},
\ \ \PP\left(\pathuu\right) = \frac{t(1-a_xa_y)}{1-ta_xa_y},
\ \ \PP\left(\pathur\right) =\frac{1-t}{1-ta_xa_y},
\end{align*}
 and when $n$ is even, choose the configuration of the corner vertex according to
\begin{align*}
\PP\left(\pathbrr\hspace{0.2cm}\right) =  \PP\left(\pathbuu\hspace{0.2cm}\right) = 1,
\ \ \ \PP\left(\pathbru\hspace{0.2cm}\right) =  \PP\left(\pathbur\hspace{0.2cm}\right) =0.
\end{align*}
This determines the incoming states of the vertices $\{(x,y)\}_{x+y=n+1}$.
\item One can repeat this procedure to fill out the whole of the half-quadrant by induction on $n$.
\end{itemize}
This Markovian procedure defines the stochastic six-vertex model on the half-quadrant. Equivalently, one can think of this procedure as inducing a probability measure on random configurations of paths in the half-quadrant: the probability of the cylindric set of path that start off some fixed finite configuration near the origin is just the product of the Boltzmann weights of the vertices in that configuration. The latter point of view will be especially useful in what follows.

\subsection{Height function and path string distribution}

The height function $\mathfrak{h}$ is a random variable defined on the vertices of the half-quadrant. For all $(x,y)$ such that $x \leq y$, we define
\begin{align}
\mathfrak{h}(x,y) =
\text{number of paths that cross one of the vertices}\ (i,y)\ \text{for}\ 1 \leq i \leq x.
\label{eq:defheight6v}
\end{align}
More generally, we will be interested in the distribution of up-right paths which exit to the north of the $n$-th horizontal gridline, for some $n \geq 1$. This collection of paths forms a random binary string $(s_1,\dots,s_n)$, where each $s_k$ equals $1$ if the north edge of the vertex $(k, n)$ is occupied and $0$ else. We refer to it as the $n$-th {\it path string.} If $\mathcal{C}$ is a configuration of the stochastic six-vertex model on the half-quadrant, we let $\sigma_n(\mathcal{C})$ denote its $n$-th path string (see the right panel of Figure \ref{fig:diagram}). The problem of finding the distribution of $\sigma_n(\mathcal{C})$ is equivalent to calculating the partition functions on triangles in the half-quadrant:
\begin{align*}
\PP \Big[ \sigma_n(\mathcal{C}) = \{s_1,\dots,s_n\} \Big]
\ \ = \ \
\begin{tikzpicture}[scale=0.8,baseline=(current bounding box.center)]
\foreach \x in {1, ..., 5} {
\draw[dotted] (0,\x) -- (\x,\x) -- (\x, 6) node[circle,scale=0.65,fill=gray] {\color{white} $s_\x$};
\draw[path] (0,\x) --(1,\x);
\draw[gray] (\x, \x-1) node{$\x$};
\draw[gray] (-1,\x) node{$\x$};
}
\end{tikzpicture}
\end{align*}
where summation is implicit over all internal edges in the lattice shown above.

When one knows $\sigma_n(\mathcal{C}) = \{s_1,\dots,s_n\}$, one can clearly reconstruct the value of the height function everywhere along the $n$-th horizontal gridline, using the fact that
\begin{align}
\label{height-path_string}
\mathfrak{h}(x,n) = \sum_{k=1}^{x} s_k.
\end{align}
Now we state the main result of this section:
\begin{theorem}\label{th:matchingHL6V}
Let $\lambda^{(1)} \subset \cdots \subset \lambda^{(n)}$ be a sequence of partitions in the ascending half-space Hall--Littlewood process (i.e. the process of Definition \ref{def:ascendingprocess}, at $q=0$), with associated probability measure
\begin{align}
\label{asc_HL}
\PHL \left( \varnothing = \lambda^{(0)} \subset \lambda^{(1)} \subset \cdots \subset \lambda^{(n)} = \lambda \right)
=
\frac{
P_{\lambda^{(1)}/\lambda^{(0)}}(a_1)
\dots
P_{\lambda^{(n)}/\lambda^{(n-1)}}(a_n)
}
{\Phi(a_1,\dots,a_n)}
\ \bel{\lambda}\
\mathds{1}_{\lambda'\,\text{even}}.
\end{align}
Let $[\lambda^{(1)} \subset \cdots \subset \lambda^{(n)}]$ encode the {\it support} of the sequence $\lambda^{(1)} \subset \cdots \subset \lambda^{(n)}$, defined as  the vector obtained by taking the difference in lengths of adjacent partitions:
\begin{align*}
[\lambda^{(1)} \subset \cdots \subset \lambda^{(n)}]
:=
\Big(\ell\left(\lambda^{(i)}\right) - \ell\left(\lambda^{(i-1)}\right)\Big)_{1 \leq i \leq n}.
\end{align*}
The following equivalence of distributions holds:
\begin{align}
\label{equiv_distrib}
\PHL \left( [\lambda^{(1)} \subset \cdots \subset \lambda^{(n)}] = (s_1,\dots,s_n) \right)
=
\PP \Big[ \sigma_n(\mathcal{C}) = (s_1,\dots,s_n) \Big],
\end{align}
where the right hand side is the path-string distribution in the stochastic six-vertex model.
\end{theorem}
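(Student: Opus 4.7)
The plan is to realize both sides of \eqref{equiv_distrib} as partition functions of two-dimensional vertex models on the half-quadrant, and to identify them via repeated application of the Yang--Baxter and reflection equations, following the general philosophy of \cite{borodin2016between} for the full-space case and the boundary refinement of \cite{wheeler2016refined}.

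First, I would express each skew Hall--Littlewood polynomial $P_{\lambda^{(i)}/\lambda^{(i-1)}}(a_i)$ as a single-row partition function in the $t$-boson vertex model, where horizontal edges carry nonnegative integer occupation numbers, vertical edges are $\{0,1\}$-valued, and the incoming/outgoing vertical states of the row encode the characteristic functions of $\lambda^{(i-1)}$ and $\lambda^{(i)}$ respectively. Stacking $n$ such rows with spectral parameters $a_1,\dots,a_n$ realizes the product $\prod_{i=1}^n P_{\lambda^{(i)}/\lambda^{(i-1)}}(a_i)$ as a partition function on an $n$-row horizontal strip; the interlacing $\lambda^{(i-1)}\subset\lambda^{(i)}$ and the support statistic $[\lambda^{(1)}\subset\cdots\subset\lambda^{(n)}]$ are then read off directly from the binary states on the vertical edges exiting the top of each row.

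Second, I would represent the boundary weight $\bel{\lambda}\mathds{1}_{\lambda' \text{ even}}$ as a column of $K$-matrix/corner vertices placed at the top of the strip, producing exactly the restriction to even-column partitions together with the correct Hall--Littlewood boundary factor. Combined with the normalization $\Phi(a_1,\dots,a_n)^{-1}$ that arises from the Littlewood identity \eqref{eq:Littleweood}, this exhibits the left-hand side of \eqref{equiv_distrib} as a single vertex-model partition function on a half-strip with empty bottom boundary and with the binary data $(s_1,\dots,s_n)$ fixed along the top.

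Third, I would apply Yang--Baxter and reflection moves to rotate this partition function into the standard half-quadrant stochastic six-vertex form. Concretely, the intertwiner between the $t$-boson row and a column of stochastic six-vertex weights can be pushed through the strip using the bulk Yang--Baxter equation (in the form of Proposition \ref{prop:graphicalCauchy}) and through the diagonal corner vertices using the reflection equation (Proposition \ref{prop:gzrelation}, refining the unfused Proposition \ref{prop:boundaryYangBaxtersimple}). Iterating this row by row converts the $t$-boson plus $K$-matrix diagram into the half-quadrant stochastic six-vertex model with bulk weights \eqref{eq:6v}, corner weights \eqref{eq:corner}, and parameters $(a_1,\dots,a_n)$, while preserving the boundary data $(s_1,\dots,s_n)$ on the top. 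Since this latter partition function is by definition $\PP[\sigma_n(\mathcal C)=(s_1,\dots,s_n)]$, the identity \eqref{equiv_distrib} follows.

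The principal obstacle is the treatment of the diagonal: the $t$-boson intertwiner cannot simply be commuted through the corner vertices using Yang--Baxter, so a genuine $K$-matrix reflection identity must be established with weights compatible with \eqref{eq:corner} on one side and with the symmetric Littlewood kernel $\bel{\lambda}$ on the other. Verifying this compatibility, and checking that no spurious boundary contributions appear when the intertwiner eventually exits the system after all row-by-row swaps, is the main calculation; the bulk commutations are a direct half-space transcription of the argument in \cite{borodin2016between}.
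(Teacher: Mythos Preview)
Your overall strategy matches the paper's: express the Hall--Littlewood side as a $t$-boson partition function with a boundary state encoding $\bel{\lambda}\mathds{1}_{\lambda'\text{ even}}$, then use the exchange relations of Proposition~\ref{prop:graphicalCauchy} and the reflection relation of Proposition~\ref{prop:gzrelation} row by row to unzip the bosonic lattice into the stochastic six-vertex half-quadrant, with the bosonic part freezing at the end.

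Two orientation issues in your description should be corrected before you execute the argument. First, in the $t$-boson model used here (see \eqref{eq:black-vertices}), it is the \emph{vertical} edges that carry nonnegative-integer occupation numbers (encoding the partitions via $\lambda=1^{m_1}2^{m_2}\cdots$), while horizontal edges are $\{0,1\}$-valued; you have these swapped. The support data $(s_1,\dots,s_n)$ sits on the \emph{right} boundary of the bosonic strip, not the top. Second, the boundary weight $\bel{\lambda}\mathds{1}_{\lambda'\text{ even}}$ is not realized as a column of corner vertices but as the covector $\bra{\rm ev}=\sum_{\lambda'\text{ even}}\bel{\lambda}\bra{\lambda}$ placed at the \emph{bottom} of the bosonic lattice. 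The corner vertices of the six-vertex model only emerge \emph{after} the reflection relation \eqref{eq:global_gz} is applied: that relation converts a grey row $A(a)$ or $B(a)$ acting on $\bra{\rm ev}$ into a red row $\bar D(a)$ or $\bar C(a)$, and it is the subsequent Yang--Baxter commutation \eqref{graph-exchange} of this red row past the remaining grey rows that deposits stochastic six-vertex crossings on the right. The ``dot'' (annihilation/creation boundary) in Proposition~\ref{prop:local_gz} is what eventually becomes a corner vertex. With these geometric corrections your outline is exactly the paper's proof, including the final trivialization step where the fully red lattice is saturated by horizontal paths and all $m_i$ are forced to zero.
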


In view of \eqref{height-path_string}, this theorem straight away yields the following corollary:
\begin{corollary}
Let $\big(\ell(\lambda^{(i)})\big)_{1 \leq i \leq n}$ be  the lengths of partitions in an ascending half-space Hall--Littlewood process \eqref{asc_HL}, and $\big(\mathfrak{h}(i,n)\big)_{1 \leq i \leq n}$ be the values of the height function along the $n$-th horizontal line in the half space six-vertex model. These two random vectors are equally distributed:
\begin{align*}
\PHL\left(
\big(\ell(\lambda^{(i)})\big)_{1 \leq i \leq n}
=
\big(k_i\big)_{1 \leq i \leq n} \right)
=
\PP\left(
\big(\mathfrak{h}(i,n)\big)_{1 \leq i \leq n}
=
\big(k_i\big)_{1 \leq i \leq n}
\right).
\end{align*}
\label{cor:equaldistHL6V}
\end{corollary}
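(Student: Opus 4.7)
The plan is to reinterpret both sides of \eqref{equiv_distrib} as partition functions of vertex-model configurations on the triangular half-quadrant, and then identify them by a row-by-row intertwining argument based on the graphical Yang--Baxter and reflection equations (the forthcoming Propositions \ref{prop:graphicalCauchy} and \ref{prop:gzrelation}, which are the reason the authors stressed that Propositions \ref{prop:YangBaxtersimple} and \ref{prop:boundaryYangBaxtersimple} are superseded rather than directly used).

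First I would invoke the $t$-boson/vertex representation of Hall--Littlewood polynomials. In this representation, a skew $P_{\lambda^{(i)}/\lambda^{(i-1)}}(a_i)$ equals the partition function of a single horizontal row of $t$-boson vertices with spectral parameter $a_i$, whose bottom/top horizontal edges record the column multiplicities of $\lambda^{(i-1)}$ and $\lambda^{(i)}$, respectively, and whose left/right horizontal edges carry prescribed occupations. Stacking the rows $i=1,\dots,n$ with the shared partition boundary conditions turns $\prod_i P_{\lambda^{(i)}/\lambda^{(i-1)}}(a_i)$ into a partition function on a vertical strip whose bottom boundary encodes $\lambda^{(0)}=\varnothing$ and whose top boundary encodes $\lambda=\lambda^{(n)}$. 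The prefactor $\bel{\lambda}\,\mathds{1}_{\lambda'\text{ even}}$ together with $1/\Phi(a_1,\dots,a_n)$ is exactly what is produced by the (skew) Littlewood identity \eqref{eq:skewLittlewood}; graphically it corresponds to closing the right-hand side of the strip with a reflecting wall, i.e.\ by folding the strip along a diagonal and inserting the corner weights from \eqref{eq:corner}. Hence the left side of \eqref{equiv_distrib}, summed over all $\lambda^{(i)}$ with fixed support vector $(s_1,\dots,s_n)$, becomes the partition function of a $t$-boson model on the half-quadrant with diagonal reflecting corners, conditioned on a prescribed top-edge occupancy pattern $(s_1,\dots,s_n)$.

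Next I would intertwine this $t$-boson half-quadrant model with the stochastic six-vertex half-quadrant model of Section~\ref{sec:def6v}. Concretely, the graphical Cauchy identity (Proposition~\ref{prop:graphicalCauchy}), which is an iteration of a Yang--Baxter relation between a $t$-boson row and an auxiliary row of stochastic six-vertex weights, lets one slide an auxiliary line from the right edge of the strip all the way to the left, converting the $t$-boson row with parameter $a_i$ into a stochastic six-vertex row with the same parameter. Doing this for each row $i=1,\dots,n$ converts the bulk of the $t$-boson partition function into the bulk of the stochastic six-vertex partition function. The diagonal boundary is handled by the graphical reflection equation (Proposition~\ref{prop:gzrelation}), which guarantees that the auxiliary rows can be absorbed into the corner vertices without changing the partition function, so that after the intertwining the diagonal carries exactly the Kuperberg-type corner weights from \eqref{eq:corner}.

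Finally, after the intertwining the partition function is that of the stochastic six-vertex model on the half-quadrant with the prescribed empty initial condition on the south boundary (which corresponds in the $t$-boson picture to $\lambda^{(0)}=\varnothing$) and with the prescribed top-edge occupation vector $(s_1,\dots,s_n)$ on the $n$-th horizontal gridline. By definition, this is the probability $\PP[\sigma_n(\mathcal{C})=(s_1,\dots,s_n)]$, and by construction the support vector of the Hall--Littlewood sequence matches the occupations of the north edges along the $n$-th line. Combining these identifications yields \eqref{equiv_distrib}, and the corollary about $(\ell(\lambda^{(i)}))_{i}$ versus $(\mathfrak h(i,n))_i$ follows at once from \eqref{height-path_string}. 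The main obstacle will be the bookkeeping in steps two and three: one has to check that the normalization $\Phi(a_1,\dots,a_n)$ emerging from the Littlewood identity is exactly consumed by the auxiliary-line sliding, and that the reflecting-corner weights produced by $\bel{\lambda}\mathds{1}_{\lambda'\text{ even}}$ coincide on the nose with the stochastic corner weights \eqref{eq:corner}; this is where the precise form of the refined Littlewood identity and the boundary Yang--Baxter relation of Kuperberg do all the work.
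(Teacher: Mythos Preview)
Your plan is the paper's: represent the Hall--Littlewood support distribution via $t$-boson row operators (Lemma~\ref{lem:partitionrowLittlewood}, equation~\eqref{eq:distrib_exp}), then iterate the reflection relation (Proposition~\ref{prop:gzrelation}) and the exchange relation (Proposition~\ref{prop:graphicalCauchy}) row by row, with the corollary following from~\eqref{height-path_string}.

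However, two points in your sketch are misstated and would derail a literal execution. The factor $\bel{\lambda}\mathds{1}_{\lambda'\text{ even}}$ is not realized by the corner weights~\eqref{eq:corner} or by any ``folding'': it enters as the boundary covector $\bra{\rm ev}=\sum_{\lambda'\text{ even}}\bel{\lambda}\bra{\lambda}$ imposed at one end of the bosonic strip, and the six-vertex corners (the dots) are \emph{produced} by the reflection step~\eqref{eq:global_gz} rather than present at the outset. Likewise, the intertwining does not ``convert a $t$-boson row into a six-vertex row'': \eqref{eq:global_gz} changes the outermost row from type $A/B$ to $\bar D/\bar C$, and then \eqref{graph-exchange} commutes this row through the remaining ones, each commutation emitting a single six-vertex crossing on the right; after $n$ iterations the bosonic lattice freezes to weight~$1$ and only the triangular array of six-vertex crossings survives. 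The prefactors $(1-a_ia_j)/(1-ta_ia_j)$ from~\eqref{graph-exchange} accumulate to exactly $1/\Phi(a_1,\dots,a_n)$, which resolves your normalization worry.
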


The rest of the section is devoted to the proof of Theorem \ref{th:matchingHL6V}. It proceeds along parallel lines to a proof in \cite{borodin2016between}, relating the distribution of lengths of partitions in an (ordinary) ascending Hall--Littlewood process to the distribution of the six-vertex model height function in the full quadrant $\mathbb{Z}_{\geq 0}^2$. Both the proof in \cite{borodin2016between} and in the present paper are extensions of ideas that were developed in \cite{wheeler2016refined}, where integrability in a model of $t$-deformed bosons was used to prove refined Cauchy and Littlewood-type summation identities involving Hall--Littlewood polynomials. Similar ideas in the context of slightly more general higher spin six-vertex model were developed independently  in  \cite{borodin2017family, borodin2016higher}.

\begin{remark}
One could prove a slightly more general version of Theorem \ref{th:matchingHL6V} relating not-necessarily ascending half-space Hall-Littlewood process with the height distribution in a six-vertex model in a more complicated domain (jagged domain) following the same lines as in the proof of Theorem 5.6 in \cite{borodin2016between}. This is not useful for our present purposes, and we do not pursue it.
\end{remark}
\subsection{An integrable model of $t$-bosons}

Following \cite{borodin2016between,wheeler2016refined}, we consider another integrable model of up-right paths, in which horizontal edges of the lattice can be occupied by at most one path, but no restriction is imposed on the number of paths that traverse a vertical edge. Assuming conservation of lattice paths through a vertex, four types of vertices are possible. We indicate these below, along with their associated Boltzmann weights:
%


\begin{align}
\label{eq:black-vertices}
\begin{array}{cccc}
\begin{tikzpicture}[scale=0.8,>=stealth]
\draw[lgray,ultra thick] (-1,0) -- (1,0);
\draw[lgray,line width=10pt] (0,-1) -- (0,1);
\node[below] at (0,-1) {$m$};
\draw[ultra thick,->,rounded corners] (-0.075,-1) -- (-0.075,1);
\draw[ultra thick,->,rounded corners] (0.075,-1) -- (0.075,1);
\node[above] at (0,1) {$m$};
\end{tikzpicture}
\quad\quad\quad
&
\begin{tikzpicture}[scale=0.8,>=stealth]
\draw[lgray,ultra thick] (-1,0) -- (1,0);
\draw[lgray,line width=10pt] (0,-1) -- (0,1);
\node[below] at (0,-1) {$m$};
\draw[ultra thick,->,rounded corners] (-0.075,-1) -- (-0.075,1);
\draw[ultra thick,->,rounded corners] (0.075,-1) -- (0.075,0) -- (1,0);
\node[above] at (0,1) {$m-1$};
\end{tikzpicture}
\quad\quad\quad
&
\begin{tikzpicture}[scale=0.8,>=stealth]
\draw[lgray,ultra thick] (-1,0) -- (1,0);
\draw[lgray,line width=10pt] (0,-1) -- (0,1);
\node[below] at (0,-1) {$m$};
\draw[ultra thick,->,rounded corners] (-1,0) -- (-0.15,0) -- (-0.15,1);
\draw[ultra thick,->,rounded corners] (0,-1) -- (0,1);
\draw[ultra thick,->,rounded corners] (0.15,-1) -- (0.15,1);
\node[above] at (0,1) {$m+1$};
\end{tikzpicture}
\quad\quad\quad
&
\begin{tikzpicture}[scale=0.8,>=stealth]
\draw[lgray,ultra thick] (-1,0) -- (1,0);
\draw[lgray,line width=10pt] (0,-1) -- (0,1);
\node[below] at (0,-1) {$m$};
\draw[ultra thick,->,rounded corners] (-1,0) -- (-0.15,0) -- (-0.15,1);
\draw[ultra thick,->,rounded corners] (0,-1) -- (0,1);
\draw[ultra thick,->,rounded corners] (0.15,-1) -- (0.15,0) -- (1,0);
\node[above] at (0,1) {$m$};
\end{tikzpicture}
\\
1
\quad\quad\quad
&
a
\quad\quad\quad
&
(1-t^{m+1})
\quad\quad\quad
&
a
\end{array}\end{align}
Here $m$ denotes the number of incoming vertical arrows,  $a$ is the horizontal parameter associated to the vertex, and $t$ is a global parameter of the model. The vertex model that comes from such a construction is called the {\it $t$-boson model}, and  we will call vertices with such weights {\it bosonic vertices}. It is ideal for our purposes since, on the one hand, the wavefunctions of the $t$-boson model are known to be Hall--Littlewood polynomials \cite{tsilevich2006quantum}, while on the other its integrability is intrinsically related to the stochastic six-vertex model. It is therefore a useful tool for bridging the two sides of \eqref{equiv_distrib}, that we wish to prove.

\tikzset{arr/.style={postaction={decorate,thick,decoration={markings,mark = at position #1 with {\arrow{>}}}}}}

\begin{theorem}
\label{thm:RLL}
For any fixed $0 \leq i_1,i_2,j_1,j_2 \leq 1$ and $m,n \in \mathbb{Z}_{\geq 0}$, the Yang--Baxter equation holds:
\begin{align}
\label{eq:RLL}
\sum_{k_1,k_2 \in \{0,1\}}\
\sum_{p=0}^{\infty}\ \ \
\begin{tikzpicture}[baseline=(current bounding box.center),scale=0.8]
\draw[dotted,thick] (-2,1) node[left] {$i_1$} -- (-1,0) node[below] {$k_1$};
\draw[dotted,thick] (-2,0) node[left] {$i_2$} -- (-1,1) node[above] {$k_2$};
\draw[lgray,ultra thick] (-1,1) -- (1,1) node[right,black] {$j_2$};
\draw[lgray,ultra thick] (-1,0) -- (1,0) node[right,black] {$j_1$};
\draw[lgray,line width=10pt] (0,-1) -- (0,2);
\node[below] at (0,-1) {$m$};
\node at (0,0.5) {$p$};
\node[above] at (0,2) {$n$};
\draw[thin,dashed,->] (0,0) -- (1,-1) node[right] {$b^{-1}$};
\draw[thin,dashed,->] (0,1) -- (1,2) node[right] {$a$};
\end{tikzpicture}
\quad
=
\quad
\sum_{k_1,k_2 \in \{0,1\}}\
\sum_{p=0}^{\infty}\ \ \
\begin{tikzpicture}[baseline=(current bounding box.center),scale=0.8]
\draw[dotted,thick] (1,1) node[above] {$k_1$} -- (2,0) node[right] {$j_1$};
\draw[dotted,thick] (1,0) node[below] {$k_2$} -- (2,1) node[right] {$j_2$};
\draw[lgray,ultra thick] (-1,1) node[left,black] {$i_1$} -- (1,1);
\draw[lgray,ultra thick] (-1,0) node[left,black] {$i_2$} -- (1,0);
\draw[lgray,line width=10pt] (0,-1) -- (0,2);
\node[below] at (0,-1) {$m$};
\node at (0,0.5) {$p$};
\node[above] at (0,2) {$n$};
\draw[thin,dashed,->] (0,0) -- (-1,-1) node[left] {$a$};
\draw[thin,dashed,->] (0,1) -- (-1,2) node[left] {$b^{-1}$};
\end{tikzpicture}
\end{align}
where the spectral parameters of the bosonic vertices are $a$ and $b^{-1}$ as indicated on the picture, and the diagonally attached vertices are vertices in the stochastic six-vertex model of \eqref{eq:6v} with parameters $a_x=a$ and $a_y=b$, rotated clockwise by 45 degrees.
\end{theorem}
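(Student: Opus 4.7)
The proof rests on path conservation, which reduces \eqref{eq:RLL} to a short finite case check. Every bosonic vertex of \eqref{eq:black-vertices} and every stochastic six-vertex vertex of \eqref{eq:6v} conserves total arrow number, so both sides of \eqref{eq:RLL} vanish identically unless the global balance $i_1+i_2+m = j_1+j_2+n$ holds; I assume this throughout the proof.

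Under this constraint the internal indices are essentially rigid. On the LHS, conservation at the six-vertex crossing forces $k_1+k_2 = i_1+i_2$, and conservation at the lower bosonic vertex forces the intermediate vertical occupation to be $p = m + k_2 - j_1$; the upper bosonic vertex then automatically satisfies $p + k_1 = n + j_2$ by the global balance. Hence at most $\binom{2}{i_1+i_2}\in\{1,2,1\}$ summands survive on the LHS. The analogous analysis on the RHS yields the same count, with $(k_1,k_2)$ playing the mirrored roles. The cases $i_1+i_2\in\{0,2\}$ are thus immediate: a single nonzero term on each side, and rewriting the relevant bosonic weight $1-t^{p+1}$ using the conservation formula shows that the two expressions match term by term.

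The substantive cases are $i_1+i_2 = 1$. For each admissible boundary choice $(i_1,i_2,j_1,j_2)$ one writes out the resulting two-term sum on each side, inserts the Boltzmann weights from \eqref{eq:6v} and \eqref{eq:black-vertices} with the bottom bosonic line carrying spectral parameter $b^{-1}$, the top line $a$, and the rotated six-vertex carrying $a_xa_y=ab$, and clears the common factor $(1-tab)$ coming from the six-vertex denominator. The $m$-dependence enters only through a single factor $(1-t^{m+1})$ or $(1-t^{m+2})$, and each identity collapses to a short polynomial relation in $a,b,t$ that is verified by direct expansion. In effect, these are the same identities as for the ordinary six-vertex Yang--Baxter equation (Proposition \ref{prop:YangBaxtersimple}), but dressed by a $t$-boson level $m$ on the vertical line.

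The chief obstacle I anticipate is bookkeeping: correctly translating the $45^\circ$-rotated pictures in \eqref{eq:RLL} into the unrotated configurations \pathrr, \pathru, \pathur, \pathuu of \eqref{eq:6v}, and pairing them consistently with the bosonic weights. A cleaner but less elementary route is to identify \eqref{eq:black-vertices} as the matrix elements of the $L$-operator of the $t$-boson algebra, whose $R$-matrix is precisely the rotated stochastic six-vertex \cite{wheeler2016refined, tsilevich2006quantum}; from that viewpoint \eqref{eq:RLL} is the standard intertwining relation $R_{12}L_1L_2 = L_2L_1R_{12}$ and can largely be cited. My plan is to present the conservation reduction explicitly, point out the algebraic interpretation for context, and then dispatch the remaining two-term identities by direct computation.
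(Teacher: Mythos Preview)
Your approach is correct and essentially identical to the paper's: the paper simply notes that conservation constrains $p$ (via $|m-p|,|n-p|\le 1$) so the sums are finite, and then says the sixteen $(i_1,i_2,j_1,j_2)$ cases are dispatched by direct computation with $m,n$ treated as parameters. Your organization by the value of $i_1+i_2$ is a natural refinement of the same method; watch the minor index slip in your conservation relation (it should read $p=m+k_1-j_1$, not $k_2$), which is exactly the bookkeeping hazard you anticipated.
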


\begin{proof}
This is by direct computation, since there are only sixteen relations to verify (all possible choices of $i_1,i_2,j_1,j_2$), treating $m$ and $n$ as arbitrary non-negative integers. The sums over $p$ are finite and easily taken, since the Boltzmann weight of the configuration (either on the left or right hand side) vanishes unless $|m-p|, |n-p| \leq 1$.
\end{proof}

It is important to introduce an alternative normalization of the vertex weights \eqref{eq:black-vertices}, obtained by sending $a \rightarrow b^{-1}$ and then simply multiplying weights of all vertices by $b$:
\begin{align}
\label{eq:red-vertices}
\begin{array}{cccc}
\begin{tikzpicture}[scale=0.8,>=stealth]
\draw[lred, ultra thick] (-1,0) -- (1,0);
\draw[lred,line width=10pt] (0,-1) -- (0,1);
\node[below] at (0,-1) {$m$};
\draw[ultra thick,->,rounded corners] (-0.075,-1) -- (-0.075,1);
\draw[ultra thick,->,rounded corners] (0.075,-1) -- (0.075,1);
\node[above] at (0,1) {$m$};
\end{tikzpicture}
\quad\quad\quad
&
\begin{tikzpicture}[scale=0.8,>=stealth]
\draw[lred,ultra thick] (-1,0) -- (1,0);
\draw[lred,line width=10pt] (0,-1) -- (0,1);
\node[below] at (0,-1) {$m$};
\draw[ultra thick,->,rounded corners] (-0.075,-1) -- (-0.075,1);
\draw[ultra thick,->,rounded corners] (0.075,-1) -- (0.075,0) -- (1,0);
\node[above] at (0,1) {$m-1$};
\end{tikzpicture}
\quad\quad\quad
&
\begin{tikzpicture}[scale=0.8,>=stealth]
\draw[lred,ultra thick] (-1,0) -- (1,0);
\draw[lred,line width=10pt] (0,-1) -- (0,1);
\node[below] at (0,-1) {$m$};
\draw[ultra thick,->,rounded corners] (-1,0) -- (-0.15,0) -- (-0.15,1);
\draw[ultra thick,->,rounded corners] (0,-1) -- (0,1);
\draw[ultra thick,->,rounded corners] (0.15,-1) -- (0.15,1);
\node[above] at (0,1) {$m+1$};
\end{tikzpicture}
\quad\quad\quad
&
\begin{tikzpicture}[scale=0.8,>=stealth]
\draw[lred,ultra thick] (-1,0) -- (1,0);
\draw[lred,line width=10pt] (0,-1) -- (0,1);
\node[below] at (0,-1) {$m$};
\draw[ultra thick,->,rounded corners] (-1,0) -- (-0.15,0) -- (-0.15,1);
\draw[ultra thick,->,rounded corners] (0,-1) -- (0,1);
\draw[ultra thick,->,rounded corners] (0.15,-1) -- (0.15,0) -- (1,0);
\node[above] at (0,1) {$m$};
\end{tikzpicture}
\\
b
\quad\quad\quad
&
1
\quad\quad\quad
&
b (1-t^{m+1})
\quad\quad\quad
&
1
\end{array}
\end{align}
We use a red background to indicate that this normalization is employed, rather than that of \eqref{eq:black-vertices}.

\subsection{Row-operators and their exchange relations}
\label{sec:row-ops}

For all integers $i \geq 1$, let $V_i$ be an infinite dimensional vector space with basis vectors $\{\ket{m}_i\}_{m \in \mathbb{Z}_{\geq0}}$. Its dual space $V_i^{*}$ is spanned by $\{\bra{m}_i\}_{m \in \mathbb{Z}_{\geq0}}$, where $\bra{m}_i \ket{n}_i = \delta_{m,n}$ for all $m,n \in \mathbb{Z}_{\geq 0}$. Further, we let $V_{1\dots L}$ denote the tensor product $\bigotimes_{i =1}^{L} V_i$. Joining $L$ of the vertices \eqref{eq:black-vertices} with common spectral parameter $a$ horizontally (likewise for the vertices \eqref{eq:red-vertices}), and summing over all possible states on internal horizontal edges, we obtain a {\it row vertex.} We denote the Boltzmann weight of a row vertex as shown below:
\begin{align}
\label{eq:row*-vert}
w_a
\left(
\begin{tikzpicture}[scale=0.6,baseline=(current bounding box.center)]
\draw[lgray,thick] (-1,0) -- (6,0);
\node[left] at (-0.8,0) {\tiny $i$};\node[right] at (5.8,0) {\tiny $j$};
\foreach\x in {0,...,5}{
\draw[lgray,line width=7pt] (\x,-1) -- (\x,1);
}
\node[below,text centered] at (0,-0.8) {\tiny $m_{L}$};\node[above] at (0,0.8) {\tiny $n_{L}$};
\node[below,text centered] at (3,-0.8) {$\cdots$};\node[above] at (3,0.8) {$\cdots$};
\node[below,text centered] at (5,-0.8) {\tiny $m_1$};\node[above] at (5,0.8) {\tiny $n_1$};
\end{tikzpicture}
\right)
&=:
w_a\Big(i,\{m_1,\dots,m_{L}\} \Big| j,\{n_1,\dots,n_{L}\}\Big),
\\
\label{eq:row-vert}
w_b
\left(
\begin{tikzpicture}[scale=0.6,baseline=(current bounding box.center)]
\draw[lred,thick] (-1,0) -- (6,0);
\node[left] at (-0.8,0) {\tiny $i$};\node[right] at (5.8,0) {\tiny $j$};
\foreach\x in {0,...,5}{
\draw[lred,line width=7pt] (\x,-1) -- (\x,1);
}
\node[below,text centered] at (0,-0.8) {\tiny $m_{L}$};\node[above] at (0,0.8) {\tiny $n_{L}$};
\node[below,text centered] at (3,-0.8) {$\cdots$};\node[above] at (3,0.8) {$\cdots$};
\node[below,text centered] at (5,-0.8) {\tiny $m_1$};\node[above] at (5,0.8) {\tiny $n_1$};
\end{tikzpicture}
\right)
&=:
\bar{w}_b\Big(i,\{m_1,\dots,m_{L}\} \Big| j,\{n_1,\dots,n_{L}\}\Big).
\end{align}
We then construct row-operators that act linearly on $V_{1\dots L}$ as follows:
\begin{align*}
T_a(i|j) : \ket{n_1}_1 \otimes \cdots \otimes \ket{n_{L}}_{L}
\mapsto
\sum_{m_1,\dots,m_{L} \geq 0}
w_a\Big(i,\{m_1,\dots,m_{L}\} \Big| j,\{n_1,\dots,n_{L}\}\Big)
\ket{m_1}_1 \otimes \cdots \otimes \ket{m_{L}}_{L},
\\
\bar{T}_b(i|j) : \ket{n_1}_1 \otimes \cdots \otimes \ket{n_{L}}_{L}
\mapsto
\sum_{m_1,\dots,m_{L} \geq 0}
\bar{w}_b\Big(i,\{m_1,\dots,m_{L}\} \Big| j,\{n_1,\dots,n_{L}\}\Big)
\ket{m_1}_1 \otimes \cdots \otimes \ket{m_{L}}_{L}.
\end{align*}
There are in total four such operators, corresponding to all possible values of $0 \leq i,j \leq 1$. It is more conventional to label them alphabetically, by writing
\begin{align}
\label{eq:row-ops}
\begin{pmatrix}
T_a(0|0) & T_a(0|1)
\\ \\
T_a(1|0) & T_a(1|1)
\end{pmatrix}
=:
\begin{pmatrix}
A_{L}(a) & B_{L}(a)
\\ \\
C_{L}(a) & D_{L}(a)
\end{pmatrix},
\quad
\begin{pmatrix}
\bar{T}_b(0|0) & \bar{T}_b(0|1)
\\ \\
\bar{T}_b(1|0) & \bar{T}_b(1|1)
\end{pmatrix}
=:
\begin{pmatrix}
\bar{A}_{L}(b) & \bar{B}_{L}(b)
\\ \\
\bar{C}_{L}(b) & \bar{D}_{L}(b)
\end{pmatrix}.
\end{align}
Now consider the limit $L \rightarrow \infty$, assuming that $|a|, \vert b\vert<1$. In this limit, $T_a(i|j)$ only remains finite when $i=0$. Indeed when $L$ tends to infinity, since only finitely many $m_i$'s and $m_j$'s will remain nonzero, one finds that $T_a(1|j)$ produces infinitely many of the last type of vertex appearing in \eqref{eq:black-vertices} with $m=0$, whose weight is  $a$, giving the row-operator a vanishingly small Boltzmann weight. Similarly, $\bar{T}_b(i|j)$ only remains finite when $i=1$. As for the cases which have a non-vanishing limit, we find it convenient to define
\begin{align*}
A(a) := \lim_{L \rightarrow \infty} A_{L}(a),\quad
B(a) := \lim_{L \rightarrow \infty} B_{L}(a),\quad
\bar{C}(b) := \lim_{L \rightarrow \infty} \bar{C}_{L}(b),\quad
\bar{D}(b) := \lim_{L \rightarrow \infty} \bar{D}_{L}(b).
\end{align*}
\begin{proposition}\label{prop:graphicalCauchy}
Let $a,b$ be two complex parameters satisfying $|ab| < 1$. The following exchange relations hold:
\begin{equation}
\label{eq:exchange2}
\begin{aligned}
(1-ab)
\bar{C}(b) A(a)
&=
(1-tab)
A(a) \bar{C}(b),
\\
(1-ab)
\bar{C}(b) B(a)
&=
t(1-ab)
B(a) \bar{C}(b)
+
ab (1-t)
A(a) \bar{D}(b),
\\
(1-ab)
\bar{D}(b) A(a)
&=
(1-ab)
A(a) \bar{D}(b)
+
(1-t)
B(a) \bar{C}(b),
\\
(1-ab)
\bar{D}(b) B(a)
&=
(1-tab)
B(a) \bar{D}(b).
\end{aligned}
\end{equation}
Graphically, we can write all of these equations in the form
\begin{multline}
\label{graph-exchange}
\left(
\frac{1-a b}{1-t a b}
\right)
\sum_{p_1,p_2,\dots \geq 0}\ \ \
\begin{tikzpicture}[baseline=(current bounding box.center),>=stealth,scale=0.7]
\draw[lgray,ultra thick] (-1,1) node[left,black] {$a$}
-- (4,1) node[right,black] {$j_2$};
\draw[lred,ultra thick] (-1,0) node[left,black] {$b$}
-- (4,0) node[right,black] {$j_1$};
\foreach\x in {0,...,3}{
\draw[lgray,line width=10pt] (3-\x,0.5) -- (3-\x,2);
\draw[lred,line width=10pt] (3-\x,-1) -- (3-\x,0.5);
}
\node[below] at (3,-1) {$m_1$};
\node at (3,0.5) {$p_1$};
\node[above] at (3,2) {$n_1$};
\node[below] at (2,-1) {$m_2$};
\node at (2,0.5) {$p_2$};
\node[above] at (2,2) {$n_2$};
\node[text centered] at (0,0.5) {$\cdots$};
\node[text centered] at (1,0.5) {$\cdots$};
\draw[ultra thick,->] (-1,0) -- (0,0);
\end{tikzpicture}
\quad
=
\\
\quad
\sum_{ k_1,k_2 \in\lbrace 0,1\rbrace}\
\sum_{p_1,p_2,\dots \geq 0}\ \ \
\begin{tikzpicture}[baseline=(current bounding box.center),>=stealth,scale=0.7]
\foreach\x in {0,...,3}{
\draw[lgray,line width=10pt] (3-\x,-1) -- (3-\x,0.5);
\draw[lred,line width=10pt] (3-\x,0.5) -- (3-\x,2);
}
\draw[lred,ultra thick] (-1,1) node[left,black] {$b$}
-- (4,1);
\draw[lgray,ultra thick] (-1,0) node[left,black] {$a$}
-- (4,0);
\draw[dotted,thick] (4,1) node[above] {$k_1$} -- (5,0) node[right] {$j_1$};
\draw[dotted,thick] (4,0) node[below] {$k_2$} -- (5,1) node[right] {$j_2$};
\node[below] at (3,-1) {$m_1$};
\node at (3,0.5) {$p_1$};
\node[above] at (3,2) {$n_1$};
\node[below] at (2,-1) {$m_2$};
\node at (2,0.5) {$p_2$};
\node[above] at (2,2) {$n_2$};
\node[text centered] at (0,0.5) {$\cdots$};
\node[text centered] at (1,0.5) {$\cdots$};
\draw[ultra thick,->] (-1,1) -- (0,1);
\end{tikzpicture}
\end{multline}
where bosonic vertices on the same row have the same spectral parameter (as indicated on the left of each row). The four possibilities in \eqref{eq:exchange2} given by the four possible choices of $j_1,j_2 \in \{0,1\}$ in \eqref{graph-exchange}.
\end{proposition}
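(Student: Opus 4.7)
The plan is a classical ``train argument'' based on the Yang--Baxter equation of Theorem \ref{thm:RLL}, followed by the $L \to \infty$ limit.

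Starting with a two-row configuration of $L$ bosonic columns (top row with spectral parameter $a$, bottom row with spectral parameter $b^{-1}$) attached to a diagonal stochastic six-vertex on the right, one would apply Theorem \ref{thm:RLL} iteratively: each application moves the six-vertex one column to the left and interchanges the spectral parameters of that column. After $L$ steps the six-vertex sits on the far left and the two rows have globally swapped their spectral parameters. Translating to the matrix-element language of Section \ref{sec:row-ops} (and writing $w^{6v}$ for the stochastic six-vertex weights of \eqref{eq:6v} with $a_x a_y = ab$), this yields the finite-$L$ RTT identity
\begin{equation*}
\sum_{k_1,k_2} w^{6v}(i_1,i_2;k_1,k_2)\, T_{b^{-1},L}(k_1|j_1)\, T_{a,L}(k_2|j_2) \;=\; \sum_{k_1,k_2} T_{a,L}(i_2|k_2)\, T_{b^{-1},L}(i_1|k_1)\, w^{6v}(k_1,k_2;j_1,j_2)
\end{equation*}
for all $i_1,i_2,j_1,j_2\in\{0,1\}$. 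Multiplying through by $b^L$ then converts each $T_{b^{-1},L}$ factor into the red-normalised $\bar T_{b,L}$.

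Next I would take $L \to \infty$. Under $|a|,|b|<1$, the operator $T_{a,L}(i|j)$ has a non-trivial limit only for $i=0$ (yielding $A(a),B(a)$), and $\bar T_{b,L}(i|j)$ only for $i=1$ (yielding $\bar C(b),\bar D(b)$); all other combinations vanish geometrically in $L$. Hence the left-hand sum collapses to $(k_1,k_2)=(1,0)$ and the right-hand sum requires $(i_1,i_2)=(1,0)$; by consistency one specialises to $(i_1,i_2)=(1,0)$ on both sides. Since $w^{6v}(1,0;1,0) = (1-ab)/(1-tab)$ (the $\pathrr$ configuration), the resulting identity is precisely the graphical equation \eqref{graph-exchange}.

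The four equations in \eqref{eq:exchange2} then follow by running $(j_1,j_2)$ over $\{0,1\}^2$ and expanding the sum on the right using path conservation $k_1+k_2 = j_1+j_2$ together with the six-vertex table \eqref{eq:6v}: in the cases $(j_1,j_2) = (0,0)$ or $(1,1)$ only one term contributes (giving the first and fourth equations), while for $(0,1)$ and $(1,0)$ two terms contribute (giving the second and third equations with the indicated prefactors $t(1-ab)$, $ab(1-t)$, $(1-ab)$, $(1-t)$). The main technical point is justifying the $L\to\infty$ limit at the operator level: one must verify that, on matrix elements against vertical states of bounded support, the non-surviving row operators tend to zero at a geometric rate in $L$ (controlled by $|a|^L$ or $|b|^L$) so that the limit may be exchanged with the finite sum over $k_1,k_2$.
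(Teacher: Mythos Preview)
Your argument is correct and is precisely the approach the paper has in mind: the paper's own proof simply states that these are the standard Yang--Baxter algebra relations among the entries $T_a(i|j)$, $\bar T_b(i|j)$ in the $L\to\infty$ limit and defers to \cite{borodin2016between} for details, whereas you have spelled out the train argument from Theorem~\ref{thm:RLL}, the renormalisation by $b^L$, and the collapse to $(k_1,k_2)=(i_1,i_2)=(1,0)$ in the limit. One minor slip in your prose: in the starting configuration before the train argument it is the \emph{bottom} row that carries parameter $a$ and the top row $b^{-1}$ (matching the RHS of \eqref{eq:RLL}), though your written RTT identity and subsequent specialisation are consistent with the correct orientation.
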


\begin{proof}
These identities are all relations in the Yang--Baxter algebra satisfied by the matrix entries $T_a(i|j)$ and $\bar{T}_a(i|j)$, in the limit $L \rightarrow \infty$, assuming the parameters $a,b$ satisfy $|a b| <1$. For more details of their derivation, using the same notations as in the present paper, see \cite{borodin2016between}.
\end{proof}

\subsection{A boundary relation}

In this section we note another property of the $t$-boson model, namely a reflection equation that it satisfies with respect to a particular choice of boundary \cite{wheeler2016refined}.

\begin{proposition}
\label{prop:local_gz}
Let $n \geq 0$ be any non-negative integer, and fix $i,j \in \{0,1\}$. The following identity holds:
\begin{align}
\label{eq:local_gz}
\sum_{m=0}^{\infty}
\prod_{k=1}^{m}
(1-t^{2k-1})
\times
w_a\left(
\begin{tikzpicture}[scale=0.7,>=stealth,baseline=(current bounding box.center)]
\draw[lgray,ultra thick] (-1.5,0) -- (1,0);
\draw[lgray,line width=10pt] (0,-1) -- (0,1);
\node[below] at (0,-1) {$2m$};
\node[above] at (0,1) {$n$};
\node[left] at (-1.5,0) {$i$};
\node[right] at (1,0) {$j$};
\node at (-1,0) {$\bullet$};
\end{tikzpicture}
\right)
=
\sum_{m=0}^{\infty}
\prod_{k=1}^{m}
(1-t^{2k-1})
\times
w_a\left(
\begin{tikzpicture}[scale=0.7,>=stealth,baseline=(current bounding box.center)]
\draw[lred,ultra thick] (-1,0) -- (1.5,0);
\draw[lred,line width=10pt] (0,-1) -- (0,1);
\node[below] at (0,-1) {$2m$};
\node[above] at (0,1) {$n$};
\node[left] at (-1,0) {$i$};
\node[right] at (1.5,0) {$j$};
\node at (1,0) {$\bullet$};
\end{tikzpicture}
\right)
\end{align}
where the dot has a path annihilating/creating property: if a rightward pointing path approaches the dot from its left, no path will emerge to the right of the dot (similarly, if no path approaches the dot from its left, a rightward pointing path will emerge to the right of the dot).
\end{proposition}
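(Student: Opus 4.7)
My plan is to reduce the identity to a bookkeeping exercise via path conservation at the bulk vertex. First, I would unpack the action of the dot: it is a local involution on the horizontal edge state (it toggles $0\leftrightarrow 1$). Hence on the LHS the bosonic vertex sees horizontal input $1-i$ and output $j$, while on the RHS it sees input $i$ and output $1-j$. Next, I invoke the fact that both the black weights \eqref{eq:black-vertices} and the red weights \eqref{eq:red-vertices} vanish unless $i_{\rm in}+m_{\rm in}=j_{\rm out}+n_{\rm out}$. Consequently, for each fixed triple $(i,j,n)$ the sums over $m$ on both sides collapse to a single surviving term, with indices
\[
m_L=\frac{n-1+i+j}{2},\qquad m_R=\frac{n+1-i-j}{2},\qquad m_R-m_L=1-i-j,
\]
provided $m_L,m_R\in\Z_{\geq 0}$ (the parity condition $n\equiv 1+i+j\pmod 2$ is identical on both sides, so if one side is forced to vanish then so is the other).

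I would then handle the four possibilities $(i,j)\in\{0,1\}^2$ in two groups. In the diagonal cases $(0,1)$ and $(1,0)$, one has $m_L=m_R$, so the boundary prefactor $\prod_{k=1}^{m}(1-t^{2k-1})$ matches automatically, and it remains to compare the single surviving vertex weights. Both pairs turn out to coincide (weights $a$ in the first subcase and $1$ in the second), as one reads off directly from the weight tables, using that the red normalization is $a$ times the black weight at the reciprocal spectral parameter. In the off-diagonal cases $(i,j)=(0,0)$ and $(1,1)$, the indices $m_L$ and $m_R$ differ by $\pm 1$, and the identity hinges on a single cancellation: the factor $(1-t^{2m+1})$ that appears in whichever vertex weight is of ``path-creation'' type is exactly what promotes $\prod_{k=1}^{m}(1-t^{2k-1})$ to $\prod_{k=1}^{m+1}(1-t^{2k-1})$, bringing the two sides into agreement.

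I do not anticipate a genuine obstacle: the whole statement is a finite, term-by-term verification once one recognizes that the specific boundary sequence $\prod_{k=1}^m(1-t^{2k-1})$ is tailored precisely to absorb the ``creation'' factor $(1-t^{m+1})$ from one vertex weight and redistribute it as a corresponding factor on the other side. The only point requiring slight care is parity: one should confirm that the conservation indices on both sides have the same integrality/non-negativity range as functions of $(i,j,n)$, so that vanishing terms on the two sides match up. Once this is checked, the four small computations close the argument.
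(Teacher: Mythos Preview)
Your proposal is correct and follows essentially the same approach as the paper: reduce each side to a single surviving $m$ via path conservation at the bosonic vertex, then verify the four cases $(i,j)\in\{0,1\}^2$ by direct comparison of weights, with the key observation in the cases $i=j$ that the creation factor $(1-t^{2m+1})$ exactly accounts for the shift in the boundary prefactor $\prod_{k=1}^m(1-t^{2k-1})$. Your presentation is slightly more systematic in writing out $m_L,m_R$ explicitly, but the substance is identical to the paper's case-by-case check.
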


\begin{proof}
We need to check the four possible values for the pair $(i,j)$. In each case, the value of $2m$ at the base of the vertex is completely determined by path conservation, so the infinite sums over $m$ trivialize. Below we list the four cases.

\underline{\it Case 1: $i=j=0$.} On the left hand side of \eqref{eq:local_gz}, the vertex vanishes unless $2m+1 = n$. On the right hand side, the vertex vanishes unless $2m = n+1$. In either case, we see that $n$ must be odd, otherwise both sides vanish identically. When $n$ is odd, we have
\begin{align*}
\prod_{k=1}^{(n-1)/2}
(1-t^{2k-1})
\times
w_a\left(
\begin{tikzpicture}[scale=0.7,>=stealth,baseline=(current bounding box.center)]
\draw[lgray,ultra thick] (-1.5,0) -- (1,0);
\draw[lgray,line width=10pt] (0,-1) -- (0,1);
\draw[ultra thick,->,rounded corners] (-1,0) -- (0,0) -- (0,1);
\node[below] at (0,-1) {$n-1$};
\node[above] at (0,1) {$n$};
\node[left] at (-1.5,0) {$0$};
\node[right] at (1,0) {$0$};
\node at (-1,0) {$\bullet$};
\end{tikzpicture}
\right)
=
\prod_{k=1}^{(n+1)/2}
(1-t^{2k-1})
\times
w_a\left(
\begin{tikzpicture}[scale=0.7,>=stealth,baseline=(current bounding box.center)]
\draw[lred,ultra thick] (-1,0) -- (1.5,0);
\draw[lred,line width=10pt] (0,-1) -- (0,1);
\draw[ultra thick,->,rounded corners] (-0.075,-1) -- (-0.075,1);
\draw[ultra thick,->,rounded corners] (0.075,-1) -- (0.075,0) -- (1,0);
\node[below] at (0,-1) {$n+1$};
\node[above] at (0,1) {$n$};
\node[left] at (-1,0) {$0$};
\node[right] at (1.5,0) {$0$};
\node at (1,0) {$\bullet$};
\end{tikzpicture}
\right),
\end{align*}
with the equality obviously holding thanks to the vertex weights \eqref{eq:black-vertices} and \eqref{eq:red-vertices}.

\underline{\it Case 2: $i=0, j=1$.} By conservation of paths, the left and right hand sides of \eqref{eq:local_gz} vanish unless $2m=n$. The equation then reads
\begin{align*}
\prod_{k=1}^{n/2}
(1-t^{2k-1})
\times
w_a\left(
\begin{tikzpicture}[scale=0.7,>=stealth,baseline=(current bounding box.center)]
\draw[lgray,ultra thick] (-1.5,0) -- (1,0);
\draw[lgray,line width=10pt] (0,-1) -- (0,1);
\draw[ultra thick,->,rounded corners] (-1,0) -- (-0.15,0) -- (-0.15,1);
\draw[ultra thick,->,rounded corners] (0,-1) -- (0,1);
\draw[ultra thick,->,rounded corners] (0.15,-1) -- (0.15,0) -- (1,0);
\node[below] at (0,-1) {$n$};
\node[above] at (0,1) {$n$};
\node[left] at (-1.5,0) {$0$};
\node[right] at (1,0) {$1$};
\node at (-1,0) {$\bullet$};
\end{tikzpicture}
\right)
=
\prod_{k=1}^{n/2}
(1-t^{2k-1})
\times
w_a\left(
\begin{tikzpicture}[scale=0.7,>=stealth,baseline=(current bounding box.center)]
\draw[lred,ultra thick] (-1,0) -- (1.5,0);
\draw[lred,line width=10pt] (0,-1) -- (0,1);
\draw[ultra thick,->,rounded corners] (-0.075,-1) -- (-0.075,1);
\draw[ultra thick,->,rounded corners] (0.075,-1) -- (0.075,1);
\draw[ultra thick,->,rounded corners] (1,0) -- (1.5,0);
\node[below] at (0,-1) {$n$};
\node[above] at (0,1) {$n$};
\node[left] at (-1,0) {$0$};
\node[right] at (1.5,0) {$1$};
\node at (1,0) {$\bullet$};
\end{tikzpicture}
\right),
\end{align*}
and the equality of the two sides is immediate since both vertices have weight equal to $a$.

\underline{\it Case 3: $i=1, j=0$.} Similarly to case 2, one finds that the left and right hand sides of \eqref{eq:local_gz} vanish unless $2m=n$. This yields the identity
\begin{align*}
\prod_{k=1}^{n/2}
(1-t^{2k-1})
\times
w_a\left(
\begin{tikzpicture}[scale=0.7,>=stealth,baseline=(current bounding box.center)]
\draw[lgray,ultra thick] (-1.5,0) -- (1,0);
\draw[lgray,line width=10pt] (0,-1) -- (0,1);
\draw[ultra thick,->,rounded corners] (-0.075,-1) -- (-0.075,1);
\draw[ultra thick,->,rounded corners] (0.075,-1) -- (0.075,1);
\draw[ultra thick,->,rounded corners] (-1.5,0) -- (-1,0);
\node[below] at (0,-1) {$n$};
\node[above] at (0,1) {$n$};
\node[left] at (-1.5,0) {$1$};
\node[right] at (1,0) {$0$};
\node at (-1,0) {$\bullet$};
\end{tikzpicture}
\right)
=
\prod_{k=1}^{n/2}
(1-t^{2k-1})
\times
w_a\left(
\begin{tikzpicture}[scale=0.7,>=stealth,baseline=(current bounding box.center)]
\draw[lred,ultra thick] (-1,0) -- (1.5,0);
\draw[lred,line width=10pt] (0,-1) -- (0,1);
\draw[ultra thick,->,rounded corners] (-1,0) -- (-0.15,0) -- (-0.15,1);
\draw[ultra thick,->,rounded corners] (0,-1) -- (0,1);
\draw[ultra thick,->,rounded corners] (0.15,-1) -- (0.15,0) -- (1,0);
\node[below] at (0,-1) {$n$};
\node[above] at (0,1) {$n$};
\node[left] at (-1,0) {$1$};
\node[right] at (1.5,0) {$0$};
\node at (1,0) {$\bullet$};
\end{tikzpicture}
\right),
\end{align*}
which holds because the vertices on both sides have Boltzmann weight equal to $1$.

\underline{\it Case 4: $i=j=1$.} This case is analogous to case 1. Again we find that both sides of \eqref{eq:local_gz} vanish unless $n$ is odd, and in the situation where $n$ is odd one has
\begin{align*}
\prod_{k=1}^{(n+1)/2}
(1-t^{2k-1})
\times
w_a\left(
\begin{tikzpicture}[scale=0.7,>=stealth,baseline=(current bounding box.center)]
\draw[lgray,ultra thick] (-1.5,0) -- (1,0);
\draw[lgray,line width=10pt] (0,-1) -- (0,1);
\draw[ultra thick,->,rounded corners] (-0.075,-1) -- (-0.075,1);
\draw[ultra thick,->,rounded corners] (0.075,-1) -- (0.075,0) -- (1,0);
\draw[ultra thick,->,rounded corners] (-1.5,0) -- (-1,0);
\node[below] at (0,-1) {$n+1$};
\node[above] at (0,1) {$n$};
\node[left] at (-1.5,0) {$1$};
\node[right] at (1,0) {$1$};
\node at (-1,0) {$\bullet$};
\end{tikzpicture}
\right)
=
\prod_{k=1}^{(n-1)/2}
(1-t^{2k-1})
\times
w_a\left(
\begin{tikzpicture}[scale=0.7,>=stealth,baseline=(current bounding box.center)]
\draw[lred,ultra thick] (-1,0) -- (1.5,0);
\draw[lred,line width=10pt] (0,-1) -- (0,1);
\draw[ultra thick,->,rounded corners] (-1,0) -- (0,0) -- (0,1);
\draw[ultra thick,->,rounded corners] (1,0) -- (1.5,0);
\node[below] at (0,-1) {$n-1$};
\node[above] at (0,1) {$n$};
\node[left] at (-1,0) {$1$};
\node[right] at (1.5,0) {$1$};
\node at (1,0) {$\bullet$};
\end{tikzpicture}
\right),
\end{align*}
where the equality can be easily checked using the Boltzmann weights \eqref{eq:black-vertices} and \eqref{eq:red-vertices}.
\end{proof}
For a partition $\lambda=1^{m_1}2^{m_2}\dots$, we introduce  the shorthand notations
$$\bra{\lambda} = \bigotimes_{i=1}^{\infty}\bra{m_i}_i, \ \ \ \ket{\la} = \bigotimes_{i=1}^{\infty}\ket{m_i}_i.$$
Proposition \ref{prop:local_gz} is a local relation on $t$-boson vertices, which can be extended to a global relation in the following way.

\begin{proposition}
Define a boundary covector
\begin{align*}
\bra{\rm ev}
:=
\sum_{\lambda' \ \text{even}}
\bel{\lambda}
\bra{\lambda}
=
\ssum_{m_1,m_2,\cdots}\
\prod_{i=1}^{\infty}
\prod_{k=1}^{m_i}
(1-t^{2k-1})
\times
\bigotimes_{i=1}^{\infty}
\bra{2m_i}_i,
\end{align*}
where the sum is over partitions $\la=1^{2m_1}2^{2m_2}\dots$ so that  only finitely many $m_j$'s are nonzero. We will use the notation $ \sum\limits^{*}$ to denote summations over infinite sequences of nonnegative integers, finitely many of which are nonzero.
The boundary covector satisfies the following reflection equations:
\begin{align}
\label{eq:global_gz}
\bra{\rm ev} A(a) = \bra{\rm ev} \bar{D}(a),
\quad
\bra{\rm ev} B(a) = \bra{\rm ev} \bar{C}(a).
\end{align}
\label{prop:gzrelation}
\end{proposition}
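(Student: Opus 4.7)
My plan is to prove both identities by an iterative \emph{dot-transport} argument that uses Proposition \ref{prop:local_gz} as its only analytic input. The starting observation is that the covector $\bra{\rm ev}$ imposes at column $i$ exactly the weighted even-sum $\sum_{m_i\geq 0}\prod_{k=1}^{m_i}(1-t^{2k-1})\bra{2m_i}_i$, which is precisely the combination appearing on both sides of the local reflection of Proposition \ref{prop:local_gz}. Thus $\bra{\rm ev}$ supplies, column by column, exactly the structure that allows one to pull a dot through a single vertex while simultaneously converting it from gray to red.

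I would begin by truncating to finite length $L$ and working with $A_L, B_L, \bar C_L, \bar D_L$ and the truncated covector $\bra{\rm ev}_L := \ssum_{m_1,\dots,m_L}\prod_{i=1}^L\prod_{k=1}^{m_i}(1-t^{2k-1})\bigotimes_{i=1}^L\bra{2m_i}_i$. For a fixed bottom state $\ket{\mu}$ supported on finitely many excited sites, the matrix element $\bra{\rm ev}_L A_L(a)\ket{\mu}$ is the partition function of a gray row of length $L$ with left boundary $0$, right boundary $0$, bottom states summed by $\bra{\rm ev}_L$, and top states fixed by $\mu$. The final passage from finite $L$ to $L\to\infty$ will be routine because $\ket{\mu}$ is supported on finitely many excited sites and $|a|<1$ controls the tail, which is exactly what was used in Section \ref{sec:row-ops} to define the infinite-volume row-operators.

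The key step is to insert on the left of the row a dot with external input $1$ and internal output $0$. Because the dot has weight $c(1;0)=1$ and its output agrees with the gray row's left boundary, this insertion does not change the partition function: it merely relabels the external left state as $1$ rather than $0$. I would then apply Proposition \ref{prop:local_gz} to the leftmost column, where the vertical even-weighted sum coming from $\bra{\rm ev}_L$ is exactly the one required by the local reflection; the effect is to convert the gray vertex into a red vertex and to push the dot one step to the right. Since the dot is now immediately to the left of the next gray vertex, the same local reflection applies there, and iterating $L$ times converts the entire row from gray to red while transporting the dot to the right end of the row. Since each local move preserves the external horizontal states of the enlarged (dot $+$ row) diagram, the final red row has left boundary $1$ (still the originally inserted input) and right boundary $1$ (so that the dot now sitting on the right, whose external output is the original $j=0$, receives $1$ from the red row). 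This is exactly $\bra{\rm ev}_L\bar D_L(a)\ket{\mu}$, giving the first identity. The proof of $\bra{\rm ev} B(a)=\bra{\rm ev}\bar C(a)$ is identical save that the initial external right is $j=1$ instead of $0$, so the resulting red row ends with left $1$ and right $0$, matching $\bar C_L$.

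The main obstacle I anticipate is the bookkeeping: I must check carefully at each intermediate step that the dot genuinely lies adjacent to a still-gray vertex (so that Proposition \ref{prop:local_gz} applies again), and that the external left and right of the combined diagram are preserved across every move, including the final dot at the right endpoint. This is the same tracking that underlies the analogous full-space identities of \cite{borodin2016between} and \cite{wheeler2016refined}, and once the matching between $\bra{\rm ev}$ and the local even-weighted sum of Proposition \ref{prop:local_gz} is in place, the half-space case reduces to essentially the same calculation.
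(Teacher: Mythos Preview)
Your proposal is correct and follows essentially the same dot-transport argument as the paper: insert a dot on the left, repeatedly apply Proposition~\ref{prop:local_gz} column by column (the even-weighted sum in $\bra{\rm ev}$ being precisely what the local relation requires) to convert gray vertices to red and push the dot to the right, then read off the resulting red row as $\bar D$ or $\bar C$. The only cosmetic difference is that you truncate to finite $L$ and take a limit, whereas the paper works directly at infinite volume using that only finitely many columns carry nonzero $m_i$ or $n_i$; both routes are equivalent here.
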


\begin{proof}
We need to check all possible components of the equations \eqref{eq:global_gz}, by projecting onto the arbitrary state
$\ket{n} := \otimes_{i=1}^{\infty} \ket{n_i}_i$. Let us consider firstly the proposed equation
$\bra{\rm ev} A(a) \ket{n} = \bra{\rm ev} \bar{D}(a) \ket{n}$, which when expressed pictorially reads
\begin{multline}
\label{dot-transfer}
\ssum_{m_1,m_2,\cdots}\
\prod_{i=1}^{\infty}
\prod_{k=1}^{m_i}
(1-t^{2k-1})
\times
w_a
\left(
\begin{tikzpicture}[baseline=(current bounding box.center),>=stealth,scale=0.8]
\node[left] (0,0) {$1$};
\draw[lgray,ultra thick] (0,0) -- (8,0);
\draw[ultra thick,->] (0,0) -- (1,0);
\foreach\x in {0,...,5}{
\draw[lgray,line width=10pt] (7-\x,-1) -- (7-\x,1);
}
\node at (1,0) {$\bullet$};
\node[right] at (8,0) {$0$};
\foreach\x in {1,2,3}{
\node[text centered,below] at (8-\x,-1) {\tiny $2m_{\x}$};
\node[text centered,above] at (8-\x,1) {\tiny $n_{\x}$};
}
\foreach\x in {3,4}{
\node[text centered,below] at (7-\x,-1) {\tiny $\cdots$};
\node[text centered,above] at (7-\x,1) {\tiny $\cdots$};
}
\end{tikzpicture}
\right)
\\
=
\ssum_{m_1,m_2,\cdots }\
\prod_{i=1}^{\infty}
\prod_{k=1}^{m_i}
(1-t^{2k-1})
\times
w_a
\left(
\begin{tikzpicture}[baseline=(current bounding box.center),>=stealth,scale=0.8]
\draw[lred,ultra thick] (0,0) -- (8,0);
\foreach\x in {1,...,6}{
\draw[lred,line width=10pt] (7-\x,-1) -- (7-\x,1);
}
\node[left] at (0,0) {$1$};
\node at (7,0) {$\bullet$};
\draw[ultra thick,->] (0,0) -- (1,0);
\draw[ultra thick,->] (6,0) -- (7,0);
\node[right] at (8,0) {$0$};
\foreach\x in {1,2,3}{
\node[text centered,below] at (7-\x,-1) {\tiny $2m_{\x}$};
\node[text centered,above] at (7-\x,1) {\tiny $n_{\x}$};
}
\foreach\x in {4,5}{
\node[text centered,below] at (7-\x,-1) {\tiny $\cdots$};
\node[text centered,above] at (7-\x,1) {\tiny $\cdots$};
}
\end{tikzpicture}
\right).
\end{multline}
This relation is now seen to be true by infinitely many applications of \eqref{eq:local_gz} (there will be finitely many nontrivial ones as only finitely many $m_i$'s and $n_j$'s are nonzero), transferring the dot at the left edge of the lattice all the way to the right edge. The proof of the relation $\bra{\rm ev} B(a) \ket{n} = \bra{\rm ev} \bar{C}(a) \ket{n}$ follows by the same argument; one needs only to replace the state $0$ on the right edge of the partition functions in \eqref{dot-transfer} by the state $1$.

\end{proof}

\subsection{One variable skew Hall--Littlewood polynomials}
Comparing the Boltzmann weights \eqref{eq:black-vertices} and \eqref{eq:red-vertices} used in the row-to-row operators with the explicit form of the one-variable skew Hall--Littlewood polynomials, one obtains the following lemma.
\begin{lemma}[Lemma 5.3 \cite{borodin2016between}]
The matrix elements of the operators $A(a),B(a),\bar{C}(b),\bar{D}(b)$ are one-variable skew Hall--Littlewood polynomials:
\begin{align}
\label{P-one-row}
&
\bra{\lambda} A(a) \ket{\mu}
=
\left( \mathds{1}_{\ell(\lambda) = \ell(\mu)} \right)
P_{\lambda/\mu}(a),
\quad
&&
\bra{\lambda} B(a) \ket{\mu}
=
\left( \mathds{1}_{\ell(\lambda) = \ell(\mu)+1} \right)
P_{\lambda/\mu}(a),
\\
\label{Q-one-row}
&
\bra{\mu} \bar{C}(b) \ket{\lambda}
=
\left( \mathds{1}_{\ell(\lambda) = \ell(\mu)+1} \right)
Q_{\lambda/\mu}(b),
\quad
&&
\bra{\mu} \bar{D}(b) \ket{\lambda}
=
\left( \mathds{1}_{\ell(\lambda) = \ell(\mu)} \right)
Q_{\lambda/\mu}(b),
\end{align}
where $\lambda$ and $\mu$ are any two partitions.
\label{lem:partitionrowLittlewood}
\end{lemma}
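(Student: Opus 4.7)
The plan is to interpret each matrix element as a sum over configurations of a single row of $t$-boson vertices with prescribed horizontal boundary states, and then to match the resulting product of Boltzmann weights with Macdonald's explicit one-variable formula for skew Hall--Littlewood polynomials from \cite[Ch.~III, \S 5]{macdonald1995symmetric}. Since the full-space analog is carried out in \cite[Lemma~5.3]{borodin2016between} in identical notation, the argument is essentially to transcribe that computation.

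First, I would identify admissible configurations. Using the convention $\ket{\nu} = \bigotimes_i \ket{m_i(\nu)}_i$, the matrix element $\bra{\lambda} A(a) \ket{\mu}$ is the sum over single-row configurations with spectral parameter $a$, empty horizontal states at the far left and far right, column occupations $\lambda$ at the bottom and $\mu$ at the top. Scanning columns from left to right, the horizontal state can toggle $0 \to 1$ only at a type-2 vertex of \eqref{eq:black-vertices} (column loses a path from bottom to top) and $1 \to 0$ only at a type-3 vertex; these must balance, forcing $\ell(\lambda) = \ell(\mu)$. Column $i$ carries a type-2 iff $m_i(\lambda) = m_i(\mu)+1$ and a type-3 iff $m_i(\mu) = m_i(\lambda)+1$, and since every horizontal edge hosts at most one arrow, these columns must alternate type-2, type-3, type-2, $\ldots$ when listed in decreasing index order---this is exactly the horizontal-strip condition $\mu \prec \lambda$, and it uniquely determines the configuration. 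The boundary-condition analysis for $B(a), \bar C(b), \bar D(b)$ is analogous and gives the stated length constraints together with $\mu \prec \lambda$.

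Second, I would multiply vertex weights along the unique configuration. In the black normalization, each closed horizontal segment from emission column $i$ to absorption column $j < i$ contributes $a$ (type-2) $\cdot\, a^{i-j-1}$ (intervening type-4's) $\cdot\, \bigl(1 - t^{m_j(\lambda)+1}\bigr) = \bigl(1 - t^{m_j(\mu)}\bigr)$ (type-3). Summing path lengths across all segments,
\begin{equation*}
\bra{\lambda} A(a) \ket{\mu} = a^{|\lambda|-|\mu|} \prod_{i\,:\, m_i(\mu) > m_i(\lambda)} \bigl(1 - t^{m_i(\mu)}\bigr),
\end{equation*}
which coincides with $P_{\lambda/\mu}(a) = \psi_{\lambda/\mu}(t)\, a^{|\lambda|-|\mu|}$. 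The $B(a)$ case introduces one unclosed emission extending to the right boundary but produces the same product formula; for the red-normalized operators $\bar C(b), \bar D(b)$, the weight $b$ now attaches to type-1 vertices inside the ``dips'' of the horizontal state, and aggregating these $b$-factors yields
\begin{equation*}
\bra{\mu} \bar D(b) \ket{\lambda} = b^{|\lambda|-|\mu|} \prod_{j\,:\, m_j(\lambda) > m_j(\mu)} \bigl(1 - t^{m_j(\lambda)}\bigr) = Q_{\lambda/\mu}(b),
\end{equation*}
with an analogous formula for $\bar C(b)$. The only step with genuine content is the unique-configuration argument via the horizontal-strip condition; the remainder is a direct identification of Boltzmann weight products with Macdonald's explicit one-variable formulas, and the $L \to \infty$ limit poses no difficulty since the infinite tail of columns with $m_i(\lambda) = m_i(\mu) = 0$ contributes weight $1$ per vertex in both normalizations (type-1 in black at state $0$, type-4 in red at state $1$).
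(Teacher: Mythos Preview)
Your proposal is correct and follows exactly the approach the paper indicates: the paper does not give its own proof but simply says the lemma follows by ``comparing the Boltzmann weights \eqref{eq:black-vertices} and \eqref{eq:red-vertices} \ldots\ with the explicit form of the one-variable skew Hall--Littlewood polynomials,'' deferring the details to \cite[Lemma~5.3]{borodin2016between}. Your write-up is precisely that comparison spelled out---the unique-configuration/horizontal-strip argument and the identification of the resulting Boltzmann weight products with $\psi_{\lambda/\mu}(t)\,a^{|\lambda|-|\mu|}$ and $\varphi_{\lambda/\mu}(t)\,b^{|\lambda|-|\mu|}$ are the standard ones.

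One tiny imprecision: in the red normalization the factor $b$ attaches not only to type-1 vertices but also to type-3 vertices (weight $b(1-t^{m+1})$); both occur exactly when $h_{k-1}=0$, and counting those columns via $1-h_{i-1}=\lambda'_i-\mu'_i$ gives the power $|\lambda|-|\mu|$. Your final formula already reflects this, so the argument goes through.
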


%
\begin{remark}
In light of Lemma \ref{lem:partitionrowLittlewood}, the exchange relation in Proposition \ref{prop:graphicalCauchy} corresponds to the skew Cauchy identity \eqref{eq:SkewCauchy} in the case $q=0$ when Hall-Littlewood polynomials are evaluated in a single variable,  while the exchange relation in Proposition \ref{prop:gzrelation} corresponds to the skew Littlewood identity \eqref{eq:skewLittlewood}.
\end{remark}
\begin{proposition}
Let $S=(s_1,\dots,s_n)$ denote a binary string, with each $s_k \in \{0,1\}$. The support distribution (defined in Theorem \ref{th:matchingHL6V}) in the ascending half-space Hall--Littlewood process can be written in the form
\begin{align}
\label{eq:distrib_exp}
\PHL\left(
[\lambda^{(1)} \subset \cdots \subset \lambda^{(n)}] = (s_1,\dots,s_n)
\right)
=
\frac{1}{\Phi(a_1,\dots,a_n)}
\times
\Big\langle {\rm ev} \Big|
\prod_{k=1}^{n}
O_k^S(a_k)
\Big| \varnothing \Big\rangle,
\end{align}
where we have defined
\begin{align*}
O_k^{S}(a)
=
\left\{
\begin{array}{ll}
A(a), \quad & s_k = 0,
\\
B(a), \quad & s_k = 1,
\end{array}
\right.
\end{align*}
and where the product in \eqref{eq:distrib_exp} is ordered from right to left as the index $k$ increases.
\end{proposition}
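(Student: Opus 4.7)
The plan is to unfold the product of row-operators on the right-hand side of \eqref{eq:distrib_exp} into a sum over admissible sequences of partitions using the completeness of the basis $\{\ket{\lambda}\}$, and to identify that sum with the definition of the ascending half-space Hall--Littlewood process \eqref{asc_HL}.

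First I would insert complete sets of intermediate states and compute
\begin{equation*}
\Big\langle {\rm ev} \Big| \prod_{k=1}^{n} O_k^S(a_k) \Big| \varnothing \Big\rangle
= \sum_{\lambda^{(1)},\dots,\lambda^{(n)}} \bra{{\rm ev}}\lambda^{(n)}\rangle
\prod_{k=1}^{n} \bra{\lambda^{(k)}} O_k^S(a_k) \ket{\lambda^{(k-1)}},
\end{equation*}
where $\lambda^{(0)} = \varnothing$. From the definition of $\bra{{\rm ev}}$ given just before Proposition \ref{prop:gzrelation}, we have $\bra{{\rm ev}}\lambda^{(n)}\rangle = \bel{\lambda^{(n)}}\,\mathds{1}_{\lambda^{(n)\prime}\text{ even}}$, which supplies the ``even'' condition and the weight $\bel{\lambda^{(n)}}$ appearing in \eqref{asc_HL}.

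Next I would invoke Lemma \ref{lem:partitionrowLittlewood} to evaluate each operator matrix element. For $s_k = 0$, the operator $O_k^S(a_k) = A(a_k)$ gives $\bra{\lambda^{(k)}} A(a_k) \ket{\lambda^{(k-1)}} = P_{\lambda^{(k)}/\lambda^{(k-1)}}(a_k)$ restricted to $\ell(\lambda^{(k)}) = \ell(\lambda^{(k-1)})$; for $s_k = 1$, the operator $O_k^S(a_k) = B(a_k)$ gives the same skew polynomial restricted to $\ell(\lambda^{(k)}) = \ell(\lambda^{(k-1)}) + 1$. The two cases together enforce $\ell(\lambda^{(k)}) - \ell(\lambda^{(k-1)}) = s_k$ for every $k$, i.e. exactly the support condition $[\lambda^{(1)}\subset\cdots\subset\lambda^{(n)}] = (s_1,\dots,s_n)$.

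Combining these observations,
\begin{equation*}
\Big\langle {\rm ev} \Big| \prod_{k=1}^{n} O_k^S(a_k) \Big| \varnothing \Big\rangle
= \sum_{\substack{\varnothing\subset\lambda^{(1)}\subset\cdots\subset\lambda^{(n)}\\ \lambda^{(n)\prime}\text{ even}\\ [\lambda^{(\cdot)}]=(s_1,\dots,s_n)}} \bel{\lambda^{(n)}} \prod_{k=1}^{n} P_{\lambda^{(k)}/\lambda^{(k-1)}}(a_k),
\end{equation*}
and dividing by $\Phi(a_1,\dots,a_n)$ produces precisely $\PHL([\lambda^{(1)}\subset\cdots\subset\lambda^{(n)}]=(s_1,\dots,s_n))$ by \eqref{asc_HL}. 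No genuine obstacle arises here, provided one keeps the operator ordering straight (operators applied right-to-left as $k$ grows match the convention that $\lambda^{(k-1)}$ appears to the right of $\lambda^{(k)}$ in each matrix element) and that the sums converge, which follows from $|a_i|<1$ and the finiteness of $\Phi(a_1,\dots,a_n)$ guaranteed by \eqref{eq:pfaffianmacdonaldsummation}.
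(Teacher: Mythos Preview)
Your proof is correct and follows essentially the same approach as the paper: insert resolutions of the identity between the row-operators, use Lemma \ref{lem:partitionrowLittlewood} to identify the matrix elements as one-variable skew Hall--Littlewood polynomials with the appropriate length constraints, read off the $\bel{\lambda}\,\mathds{1}_{\lambda'\text{ even}}$ factor from the definition of $\bra{{\rm ev}}$, and divide by $\Phi(a_1,\dots,a_n)$.
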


\begin{proof}
We insert the identity $\sum_{\lambda^{(k)}} \ket{\lambda^{(k)}}\bra{\lambda^{(k)}}$ at the left of each operator $O_k^S(a_k)$, for all $1 \leq k \leq n$. Using the formulae \eqref{P-one-row} for the one-variable skew Hall--Littlewood polynomials, this produces the sum
\begin{align*}
\Big\langle {\rm ev} \Big|
\prod_{k=1}^{n}
O_k^S(a_k)
\Big| \varnothing \Big\rangle
=
\sum_{\lambda^{(1)} \subset \cdots \subset \lambda^{(n)} = \lambda}
\mathds{1}_{\lambda'\ \text{even}}
\cdot
b^{\rm el}_{\lambda}
\prod_{k=1}^{n}
\left(
P_{\lambda^{(k)} / \lambda^{(k-1)}}(a_k)
\mathds{1}_{\ell(\lambda^{(k)}) - \ell(\lambda^{(k-1)}) = s_k}
\right),
\end{align*}
which when divided by $\Phi(a_1,\dots,a_n)$ recovers precisely the claimed distribution.
\end{proof}

\subsection{Equivalence between support and path string distributions}

We are now ready to prove Theorem \ref{th:matchingHL6V}, the direct equivalence between the support distribution in the ascending half-space Hall--Littlewood process and the path string distribution in the half-space stochastic six-vertex model.

\begin{proof}[Proof of Theorem \ref{th:matchingHL6V}]
We start by expressing the support distribution as an expectation value in the $t$-boson model, as in \eqref{eq:distrib_exp}. Graphically, this relation takes the form
\begin{multline}
\PHL
\left(
[\lambda^{(1)} \subset \cdots \subset \lambda^{(n)}] = (s_1,\dots,s_n)
\right)
=
\\
\prod_{1 \leq i<j \leq n}
\left(
\frac{1-a_i a_j}{1-t a_i a_j}
\right)
\times
\ssum_{m_1,m_2,\cdots}\
\prod_{i=1}^{\infty}
\prod_{k=1}^{m_i}
(1-t^{2k-1})
\begin{tikzpicture}[scale=0.7,baseline=(current bounding box.center),>=stealth]
\foreach\x in {0,...,6}{
\draw[lgray,line width=10pt] (\x,0) -- (\x,7);
}
\foreach\y in {1,...,6}{
\draw[lgray,thick] (-1,\y) -- (7,\y);
}
\draw[ultra thick,->] (6,1) -- (7,1); \draw[ultra thick,->] (6,2) -- (7,2);
\draw[ultra thick,->] (6,4) -- (7,4); \draw[ultra thick,->] (6,6) -- (7,6);
\node[right] at (7,1) {$s_n$}; \node[right] at (7,6) {$s_1$};
\node[left] at (-1,6) {$a_1$};
\node[left] at (-1,3.5) {$\vdots$};
\node[left] at (8,3.5) {$\vdots$};
\node[left] at (-1,1) {$a_n$};
\node[below] at (6,0) {\tiny $2m_1$};
\node[below] at (5,0) {\tiny $2m_2$};
\node[below] at (4,0) {\tiny $2m_3$};
\node[below] at (2,0) {$\cdots$};
\node[above] at (6,7) {\tiny $0$};
\node[above] at (5,7) {\tiny $0$};
\node[above] at (4,7) {\tiny $0$};
\node[above] at (2,7) {$\cdots$};
\end{tikzpicture}
\end{multline}
where  the boundary conditions of the partition function are as follows: {\bf 1.} No paths enter from the left edge of the lattice, which is considered to be infinitely far to the left. {\bf 2.} $2m_i$ paths enter from the bottom of the $i$-th column, where columns are counted from right to left. {\bf 3.} $s_i\in \lbrace 0,1\rbrace$ paths leave via the right boundary of the $i$-th row, where rows are counted from top to bottom. {\bf 4.} No paths leave from the top of the lattice.

Now consider the $n$-th row of the lattice. Depending on the value of $s_n$, it either encodes the operator $A(a_n)$ (in the case $s_n=0$) or the operator $B(a_n)$ (in the case $s_n=1$). Irrespective of the value of $s_n$, we will be able to apply one of the relations \eqref{eq:global_gz} to convert this operator into $\bar{D}(a_n)$ or $\bar{C}(a_n)$. This takes us to the partition function
\begin{multline}
\label{eq:proof1}
\PHL
\left(
[\lambda^{(1)} \subset \cdots \subset \lambda^{(n)}] = (s_1,\dots,s_n)
\right)
=
\\
\prod_{1 \leq i<j \leq n}
\left(
\frac{1-a_i a_j}{1-t a_i a_j}
\right)
\times
\ssum_{m_1,m_2,\cdots}\
\prod_{i=1}^{\infty}
\prod_{k=1}^{m_i}
(1-t^{2k-1})
\begin{tikzpicture}[scale=0.7,baseline=(current bounding box.center),>=stealth]
\foreach\x in {0,...,6}{
\draw[lgray,line width=10pt] (\x,1) -- (\x,7);
}
\foreach\y in {2,...,6}{
\draw[lgray,thick] (-1,\y) -- (7,\y);
}
\foreach\x in {0,...,6}{
\draw[lred,line width=10pt] (\x,-1) -- (\x,1);
}
\foreach\y in {0}{
\draw[lred,thick] (-1,\y) -- (8,\y);
}
\node at (7,0) {$\bullet$};
\draw[ultra thick,->] (-1,0) -- (0,0);
\draw[ultra thick,->] (7,0) -- (8,0); \draw[ultra thick,->] (6,2) -- (7,2);
\draw[ultra thick,->] (6,4) -- (7,4); \draw[ultra thick,->] (6,6) -- (7,6);
\node[right] at (7,2) {$s_{n-1}$};
\node[right] at (8,0) {$s_n$}; \node[right] at (7,6) {$s_1$};
\node[left] at (-1,6) {$a_1$};
\node[left] at (-1,3.5) {$\vdots$};
\node[left] at (8,3.5) {$\vdots$};
\node[left] at (-1,2) {$a_{n-1}$};
\node[left] at (-1,0) {$a_n$};
\node[below] at (6,-1) {\tiny $2m_1$};
\node[below] at (5,-1) {\tiny $2m_2$};
\node[below] at (4,-1) {\tiny $2m_3$};
\node[below] at (2,-1) {$\cdots$};
\node[above] at (6,7) {\tiny $0$};
\node[above] at (5,7) {\tiny $0$};
\node[above] at (4,7) {\tiny $0$};
\node[above] at (2,7) {$\cdots$};
\end{tikzpicture}
\end{multline}
Noting that the factor $\prod_{i=1}^{n-1} (1-a_i a_n)/(1-t a_i a_n)$ is present in  \eqref{eq:proof1}, we can now use the relation \eqref{graph-exchange} $n$ times to transfer the red row to the top of the lattice. The result of this procedure is
\begin{multline}
\label{eq:proof2}
\PHL
\left(
[\lambda^{(1)} \subset \cdots \subset \lambda^{(n)}] = (s_1,\dots,s_n)
\right)
=
\prod_{1 \leq i<j \leq n-1}
\left(
\frac{1-a_i a_j}{1-t a_i a_j}
\right)
\times
\\
\ssum_{m_1,m_2,\cdots}\
\prod_{i=1}^{\infty}
\prod_{k=1}^{m_i}
(1-t^{2k-1})
\begin{tikzpicture}[scale=0.7,baseline=(current bounding box.center),>=stealth]
\foreach\x in {0,...,6}{
\draw[lgray,line width=10pt] (\x,-1) -- (\x,5);
}
\foreach\y in {0,...,4}{
\draw[lgray,thick] (-1,\y) -- (7,\y);
}
\foreach\x in {0,...,6}{
\draw[lred,line width=10pt] (\x,5) -- (\x,7);
}
\foreach\y in {6}{
\draw[lred,thick] (-1,\y) -- (7,\y);
}
\draw[thick, dotted] (7,6) -- (10.5,2.5);
\node at (10.5,2.5) {$\bullet$}; \draw[ultra thick,->] (10.5,2.5) -- (11,3);
\draw[thick, dotted] (7,4) -- (8.5,5.5) node[above right] {$s_1$};
\draw[thick, dotted] (7,3) -- (9,5);
\draw[thick, dotted] (7,2) -- (9.5,4.5);
\draw[thick, dotted] (7,1) -- (10,4);
\draw[thick, dotted] (7,0) -- (10.5,3.5) node[above right] {$s_{n-1}$};
\draw[thick, dotted] (10.5,2.5) -- (11,3) node[above right] {$s_n$};
\draw[ultra thick,->] (8,5) -- (8.5,5.5); \draw[ultra thick,->] (9,4) -- (9.5,4.5);
\draw[ultra thick,->] (10,3) -- (10.5,3.5);
\draw[ultra thick,->] (-1,6) -- (0,6);
\node[left] at (-1,4) {$a_1$};
\node[left] at (-1,1.5) {$\vdots$};
\node[left] at (-1,0) {$a_{n-1}$};
\node[left] at (-1,6) {$a_n$};
\node[below] at (6,-1) {\tiny $2m_1$};
\node[below] at (5,-1) {\tiny $2m_2$};
\node[below] at (4,-1) {\tiny $2m_3$};
\node[below] at (2,-1) {$\cdots$};
\node[above] at (6,7) {\tiny $0$};
\node[above] at (5,7) {\tiny $0$};
\node[above] at (4,7) {\tiny $0$};
\node[above] at (2,7) {$\cdots$};
\end{tikzpicture}
\end{multline}
We can then iterate the steps in \eqref{eq:proof1} and \eqref{eq:proof2} a further $n-1$ times. This converts all grey rows into red ones:
\begin{multline}
\PHL
\left(
[\lambda^{(1)} \subset \cdots \subset \lambda^{(n)}] = (s_1,\dots,s_n)
\right)
=
\\
\ssum_{m_1,m_2,\cdots}\
\prod_{i=1}^{\infty}
\prod_{k=1}^{m_i}
(1-t^{2k-1})
\begin{tikzpicture}[scale=0.7,baseline=(current bounding box.center),>=stealth]
\foreach\x in {0,...,6}{
\draw[lred,line width=10pt] (\x,0) -- (\x,7);
}
\foreach\y in {1,...,6}{
\draw[lred,thick] (-1,\y) -- (7,\y);
\draw[ultra thick,->] (-1,\y) -- (0,\y);
}
\draw[thick, dotted] (7,6) -- (12.5,0.5); \node at (12.5,0.5) {$\bullet$};
\draw[thick, dotted] (12.5,0.5) -- (13,1) node[above right] {$s_n$}; \draw[ultra thick,->] (12.5,0.5) -- (13,1);
\draw[thick, dotted] (7,5) -- (11.5,0.5); \node at (11.5,0.5) {$\bullet$};
\draw[thick, dotted] (11.5,0.5) -- (12.5,1.5); \draw[ultra thick,->] (12,1) -- (12.5,1.5);
\draw[thick, dotted] (7,4) -- (10.5,0.5); \node at (10.5,0.5) {$\bullet$};
\draw[thick, dotted] (10.5,0.5) -- (12,2);
\draw[thick, dotted] (7,3) -- (9.5,0.5); \node at (9.5,0.5) {$\bullet$};
\draw[thick, dotted] (9.5,0.5) -- (11.5,2.5); \draw[ultra thick,->] (11,2) -- (11.5,2.5);
\draw[thick, dotted] (7,2) -- (8.5,0.5); \node at (8.5,0.5) {$\bullet$};
\draw[thick, dotted] (8.5,0.5) -- (11,3);
\draw[thick, dotted] (7,1) -- (7.5,0.5); \node at (7.5,0.5) {$\bullet$};
\draw[thick, dotted] (7.5,0.5) -- (10.5,3.5) node[above right] {$s_1$}; \draw[ultra thick,->] (10,3) -- (10.5,3.5);
\node[left] at (-1,6) {$a_n$};
\node[left] at (-1,3.5) {$\vdots$};
\node[left] at (-1,1) {$a_1$};
\node[below] at (6,0) {\tiny $2m_1$};
\node[below] at (5,0) {\tiny $2m_2$};
\node[below] at (4,0) {\tiny $2m_3$};
\node[below] at (2,0) {$\cdots$};
\node[above] at (6,7) {\tiny $0$};
\node[above] at (5,7) {\tiny $0$};
\node[above] at (4,7) {\tiny $0$};
\node[above] at (2,7) {$\cdots$};
\end{tikzpicture}
\end{multline}
Now we observe that the bosonic lattice is completely trivialized, since the $n$ incoming paths at the left edge cannot leave the lattice via the top external edges. This forces them to propagate horizontally, so that the red horizontal lines are completely saturated by paths. This in turn means that each $m_i$ must be equal to zero, trivializing the summation. The resulting frozen lattice configuration has weight $1$, so we remove it and retain only the six-vertex partition function emerging from the right:
\begin{align}
\PHL
\left(
[\lambda^{(1)} \subset \cdots \subset \lambda^{(n)}] = \{s_1,\dots,s_n\}
\right)
=
\begin{tikzpicture}[scale=0.7,baseline=(current bounding box.center),>=stealth]
\foreach \x in {1, ..., 6} {
\draw[dotted] (0,\x) -- (\x,\x) -- (\x, 7);
\draw[ultra thick,->] (0,\x) --(1,\x);
\node[above] at (\x,7) {$s_{\x}$};
\node at (\x,\x) {$\bullet$};
\draw[gray] (-1,\x) node{$a_\x$};
}
\foreach \x in {1,3,5,6}{
\draw[ultra thick,->] (\x,6) -- (\x,7);
}
\end{tikzpicture}
\end{align}
The partition function appearing on the right hand side evaluates the probability that the $n$-th path string in the stochastic six-vertex model is equal to $\{s_1,\dots,s_n\}$, i.e. $\PP( \sigma_n(\mathcal{C}) = \{s_1,\dots,s_n\})$, completing the proof of \eqref{equiv_distrib}.

\end{proof}

\begin{corollary}
For any $x\in \R$, $n\in 2\Z_{>0}$, and any parameters $(a_1, \dots, a_n)\in(0,1)^n$,
\begin{equation}
\EE\left[\frac{1}{(-t^{x+n-\mathfrak{h}(n,n)}, t^2)_{\infty}}\right] =  \Pf\left[  \mathsf{J}+\fkernel_x\cdot \kernel^{\complement}\right]_{\ell^2(\Z_{\geqslant 0})},
\end{equation}
where $\mathfrak{h}(n,n)$ is the height function at $(n,n)$ in the half-space  stochastic six-vertex model, and $\fkernel_x$ is defined in \eqref{eq:deffkernel}.
\label{cor:fredholmhnn}
\end{corollary}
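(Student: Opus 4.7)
The plan is to combine two results already established in the excerpt. First, Proposition \ref{prop:FredholmHL} asserts that
\[
\EPHL\left[\frac{1}{(-t^{x+n-\ell(\la)};\, t^2)_{\infty}}\right] = \Pf\left[\mathsf{J} + \fkernel_x \cdot \kernel^{\complement}\right]_{\ell^2(\Z_{\geqslant 0})},
\]
where the expectation is taken under the half-space Hall--Littlewood measure with specializations $\rho^+ = (a_1,\dots,a_n)$ and $\rho^- = \varnothing$ (equivalently, the marginal of $\lambda^{(n)}$ in the ascending half-space Hall--Littlewood process of Definition \ref{def:ascendingprocess}). Thus it suffices to identify the left-hand side in the Corollary with this Hall--Littlewood expectation.

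To make this identification, I would invoke Corollary \ref{cor:equaldistHL6V}, which is a direct consequence of Theorem \ref{th:matchingHL6V}. That corollary states the equality in distribution of the partition-length vector $\big(\ell(\lambda^{(i)})\big)_{1\leq i\leq n}$ in the ascending half-space Hall--Littlewood process and the height-function vector $\big(\mathfrak{h}(i,n)\big)_{1\leq i\leq n}$ in the half-space stochastic six-vertex model. In particular, taking $i = n$, we have the one-dimensional marginal identity
\[
\ell(\lambda^{(n)}) \stackrel{(d)}{=} \mathfrak{h}(n,n).
\]
Since the left-hand side of Corollary \ref{cor:fredholmhnn} is the expectation of a function only of $\mathfrak{h}(n,n)$, and the left-hand side of Proposition \ref{prop:FredholmHL} is the expectation of the same function of $\ell(\lambda^{(n)})$, the two expectations coincide.

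Combining these two observations completes the proof. There is no substantive obstacle: the key content is already packaged in Proposition \ref{prop:FredholmHL} (where the Pfaffian side was produced via the refined Littlewood identity and the Pfaffian Schur correlation kernel) and Theorem \ref{th:matchingHL6V} (the Yang--Baxter / reflection-equation argument matching the Hall--Littlewood process with the six-vertex model). The only thing to check is the trivial observation that the identity we want concerns a function of a single marginal, so the equality-in-distribution at a single site is all that is needed.
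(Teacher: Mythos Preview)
Your proposal is correct and follows exactly the paper's approach: the paper's proof simply says ``This follows immediately from Proposition \ref{prop:FredholmHL} and Corollary \ref{cor:equaldistHL6V},'' which is precisely the combination you spell out in more detail.
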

\begin{proof}
This follows immediately from Proposition \ref{prop:FredholmHL} and Corollary \ref{cor:equaldistHL6V}.
\end{proof}
\begin{remark}
Corollary \ref{cor:fredholmhnn} could be used to show an analogue of Theorem \ref{th:GOElimitintro} for the height function of the half-space stochastic six-vertex model, i.e. proving that $\mathfrak{h}(n,n)$ has Tracy-Widom GOE fluctuations as $n$ goes to infinity. A possible approach would be to adapt the arguments of \cite[Appendix B]{aggarwal2016phase} to the half-space case and we do not pursue that here.
\label{rem:asymptotics6v}
\end{remark}

\section{Half-line ASEP with open boundary}
\label{sec:ASEP}
Consider the six-vertex model in a half-quadrant from Section \ref{sec:def6v} where $a_x\equiv a$, and  scale $a$ as
$$ a=1-\frac{(1-t)\e}{2},\ \ \  \e\xrightarrow[\e>0]{}0,$$
so that to first order in $\e$,
\begin{equation}
\PP\left(\pathrr\right) \approx \e, \ \ \PP\left(\pathru\right) \approx 1-\e,\ \ \PP\left(\pathuu\right) \approx t\e, \ \ \PP\left(\pathur\right) \approx 1-t\e.
\label{eq:limitweights}
\end{equation}
Moreover, we rescale $n$ as $n=\tau \e^{-1}$ with finite $\tau>0$.

\begin{definition}
The \emph{half-line ASEP} is a continuous time Markov process on the state-space parametrized by occupation variables $\big\lbrace  \big(\eta_x\big)_{x\in\Z_{>0}}\in \lbrace 0,1\rbrace^{\Z_{>0}} \big\rbrace$. The state $\eta(\tau)$ at time $\tau$ evolves according to the following dynamics: at any given time $\tau\in [0,\infty)$ and $x\in \Z_{>0}$, a particle jumps from site $x$ to $x+1$ at exponential rate
$$\eta_x(\tau)(1-\eta_{x+1}(\tau)) \in \lbrace 0,1\rbrace, $$ and from site $x+1$ to $x$ at exponential rate $$t\ \eta_{x+1}(\tau)(1-\eta_x(\tau))\in\lbrace 0,t\rbrace.$$ Further,  a particle is created or annihilated at the site $1$ at exponential rates
$$ \ratealpha\ (1-\eta_1(\tau)) \ \  \text{and}\ \  \rategamma\ \eta_1(\tau).$$
All these events are independent. We will restrict our attention to the case $\ratealpha=1/2$ and $\rategamma=t/2$ (see Figure \ref{fig:halfASEP}), and to the empty initial condition where there are no particles in the system at time $0$.  We define the current at site $x$ by
$$ N_x(\tau) = \sum_{i=x}^{\infty} \eta_i(\tau),$$
and we simply denote by $\NN(\tau) = N_1(\tau)$ the  number of particles in the system at time $\tau$.
\label{def:halflineASEP}
\end{definition}
\begin{figure}
\begin{tikzpicture}[scale=0.7]
\draw[thick] (-1.2, 0) circle(1.2);
\draw (-1.2,0) node{reservoir};
\draw[thick] (0, 0) -- (12.5, 0);
\foreach \x in {1, ..., 12} {
	\draw[gray] (\x, 0.15) -- (\x, -0.15) node[anchor=north]{\footnotesize $\x$};
}

\fill[thick] (3, 0) circle(0.2);
\fill[thick] (6, 0) circle(0.2);
\fill[thick] (7, 0) circle(0.2);
\fill[thick] (10, 0) circle(0.2);
\draw[thick, ->] (3, 0.3)  to[bend left] node[midway, above]{$1$} (4, 0.3);
\draw[thick, ->] (6, 0.3)  to[bend right] node[midway, above]{$t$} (5, 0.3);
\draw[thick, ->] (7, 0.3) to[bend left] node[midway, above]{$1$} (8, 0.3);
\draw[thick, ->] (10, 0.3) to[bend left] node[midway, above]{$1$} (11, 0.3);
\draw[thick, ->] (10, 0.3) to[bend right] node[midway, above]{$t$} (9, 0.3);
\draw[thick, ->] (-0.1, 0.5) to[bend left] node[midway, above]{$1/2$} (0.9, 0.4);
\draw[thick, <-] (0, -0.5) to[bend right] node[midway, below]{$t/2$} (0.9, -0.4);
\end{tikzpicture}
\caption{Jump rates in the half-line ASEP. }
\label{fig:halfASEP}
\end{figure}
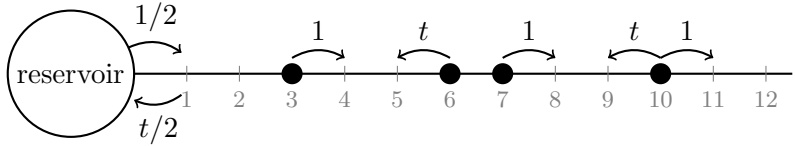

\begin{proposition}
Under the scalings and boundary and initial conditions as in Definition \ref{def:halflineASEP} (with $\ratealpha=1/2$ and $\rategamma=t/2$), for any $x\in\lbrace 1, 2, \dots\rbrace$,
$$ n-x -\mathfrak{h}(n-x,n) \xRightarrow[\e \to 0]{} N_x(\tau),$$
where $\mathfrak{h}$ is defined in \eqref{eq:defheight6v} and $N_x(\tau)$ is defined in Definition \ref{def:halflineASEP}.

Moreover, along a sequence of $\e$ such that $n
$ is even,
$$ n- \mathfrak{h}(n,n) \xRightarrow[\e \to 0]{} \lceil \NN(\tau)\rceil_2,$$
where for an integer $k$, we define
$$\lceil k \rceil_2 := \min \lbrace i\in 2\Z: i \geqslant k \rbrace.$$
\label{prop:cv6vtoASEP}
\end{proposition}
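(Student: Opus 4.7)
The plan is to couple the half-quadrant stochastic six-vertex model (under the stated scaling) to a discrete-time particle system on $\Z_{>0}$ that converges to half-line ASEP in the limit. The argument parallels the full-line convergence of the stochastic six-vertex model to ASEP \cite{aggarwal2016phase, borodin2016asep}, with the genuinely new feature being the analysis of the corner vertices along the diagonal.

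\textbf{Particle identification.} Let $s^n_k\in\{0,1\}$ indicate whether vertex $(k,n)$ has an outgoing upward edge, so that $\mathfrak h(x,n)=\sum_{k=1}^{x}s^n_k$ by \eqref{height-path_string}. Define
\[
\eta^{\e}_i(n)\;:=\;1-s^n_{n-i}\qquad\text{for } 1\leqslant i\leqslant n-1,
\]
and $\eta^{\e}_i(n):=0$ for $i\geqslant n$. A change of summation index gives $\sum_{i\geqslant x}\eta^{\e}_i(n)=(n-x)-\mathfrak h(n-x,n)$, reducing the first assertion to the joint weak convergence of finitely many coordinates $\eta^{\e}(\lfloor\tau/\e\rfloor)\Rightarrow \eta(\tau)$ as $\e\to 0$.

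\textbf{Bulk analysis.} The scaled weights are $\PP(\pathrr)=\e+O(\e^2)$, $\PP(\pathuu)=t\e+O(\e^2)$, $\PP(\pathru)=1-\e+O(\e^2)$, $\PP(\pathur)=1-t\e+O(\e^2)$. Tracking a single non-turning event through a row of otherwise-turning vertices, one sees that a $\pathrr$ event in row $n$ induces a single right-neighbour jump of one particle in the passage $\eta^{\e}(n-1)\to\eta^{\e}(n)$, and a $\pathuu$ event induces the symmetric leftward jump; the exclusion rule is automatic since a $\pathrr$ or $\pathuu$ event is impossible at a vertex with both inputs already present. Each row contains $O(1/\e)$ vertices with deviation probability $O(\e)$ per vertex, so the expected number of deviations per row is $O(1)$ and two deviations in the same row have combined probability $O(\e)$. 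After the time rescaling $\tau=n\e$, these deviations assemble into independent Poisson clocks of rates $1$ and $t$ at each interior site, matching the bulk generator of half-line ASEP.

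\textbf{Boundary analysis.} The corner vertex $(n,n)$ processed during row $n$ is deterministic given its west input, but that input depends stochastically on the whole row. At $a=1$, induction along the row shows that successive corners alternate deterministically between absorbing a path (odd $n$) and creating one (even $n$), giving the path-conservation identity
\[
\mathfrak h(n,n)\;=\;n+\#\{\text{creating corners in }[1,n]\}-\#\{\text{absorbing corners in }[1,n]\},
\]
so that $\mathfrak h(n,n)=2\#\{\text{create}\}$ is always even. For $\e>0$, the leading-order perturbations of this alternation come from a $\pathrr$ event at the column-$(n-1)$ vertex on an even row (which flips the corner from create to absorb and, once reinterpreted via $\eta^{\e}$, adds a particle at site $1$), or a $\pathuu$ event on an odd row (which removes a particle at site $1$). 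Since each such event has probability $\e$ or $t\e$ and only half of the $1/\e$ rows per unit ASEP time are eligible for an event of each type, the limiting boundary rates come out to $\ratealpha=1/2$ for creation at site $1$ (when $\eta^{\e}_1=0$) and $\rategamma=t/2$ for annihilation (when $\eta^{\e}_1=1$), matching Definition~\ref{def:halflineASEP}.

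\textbf{Convergence and parity.} With bulk and boundary generators matched, the process-level convergence $\eta^{\e}\Rightarrow\eta$ follows from a standard generator-convergence / Trotter-Kurtz argument, yielding the first assertion. For the second, including the dummy coordinate $\eta^{\e}_0(n):=1-s^n_n$ gives $n-\mathfrak h(n,n)=\eta^{\e}_0(n)+\sum_{i\geqslant 1}\eta^{\e}_i(n)$. The corner identity above forces the left-hand side to share the parity of $n$, so for $n$ even the value of $\eta^{\e}_0(n)$ is pinned to the parity of $\sum_{i\geqslant 1}\eta^{\e}_i(n)$. Passing to the limit, $\sum_{i\geqslant 1}\eta^{\e}_i(n)\to N(\tau)$ forces $\eta^{\e}_0(n)\to\mathbf 1_{N(\tau)\text{ odd}}$, whence $n-\mathfrak h(n,n)\Rightarrow N(\tau)+\mathbf 1_{N(\tau)\text{ odd}}=\lceil N(\tau)\rceil_2$.

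\textbf{Main obstacle.} The substantive new work is the boundary analysis: controlling how bulk fluctuations propagate all the way to the deterministic corner vertex, and verifying that the combinatorial accounting --- in particular the $1/2$ prefactors coming from the corner alternation and the conditioning on $\eta^{\e}_1$ --- produces exactly the ASEP boundary rates $\ratealpha=1/2$ and $\rategamma=t/2$. The full-line convergence argument does not see this step, so this is where the proof genuinely differs.
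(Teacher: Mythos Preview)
Your overall strategy---particle-hole inversion, separate bulk and boundary analyses, and the parity argument for the second assertion---matches the paper's. The parity bookkeeping for $n-\mathfrak h(n,n)$ is correct and essentially what the paper does.

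There is, however, a genuine gap in your bulk step. You write that each row has $O(1/\e)$ vertices, each deviating with probability $O(\e)$, and conclude that ``two deviations in the same row have combined probability $O(\e)$''. That is false: with $\sim 1/\e$ independent Bernoulli$(\e)$ trials the number of successes is approximately Poisson$(1)$, so multiple deviations per row occur with probability bounded away from zero. Consequently the appeal to a ``standard generator-convergence / Trotter--Kurtz argument'' is not justified as written: the one-step transition of your discrete chain does not approximate a single ASEP jump.

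The paper closes this gap by a preliminary reduction to finitely many particles. Because the initial configuration is empty, the total number of particles ever injected up to six-vertex time $\tau\e^{-1}$ is stochastically dominated by a Binomial$(\tau\e^{-1},\e)$ variable, whose tail decays exponentially \emph{uniformly in $\e$}. Conditioning on the event that at most $k$ particles were ever injected, only $O(1)$ vertices per row carry a particle (in the hole picture), and on that event your heuristic becomes correct: the probability of two active deviations in one row really is $O(\e)$, and the finite-particle dynamics converge to ASEP by elementary arguments. Summing over $k$ using the uniform tail bound then gives the unconditional convergence. Your boundary analysis and parity argument are fine once this reduction is in place; what is missing is precisely this control on the number of active particles.
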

\begin{remark}
For the stochastic six-vertex model in a quadrant and ASEP on $\Z$, a heuristic approach to the convergence was provided in \cite{borodin2016stochasticsix}: Consider the ensemble of paths in the six-vertex model   and interpret each path as the trajectory of a particle where the vertical axis is the time. Under the scalings that we consider, the dynamics of this particle system converge to those of ASEP (modulo a shift of particle positions by time). The convergence was proven rigorously in \cite{aggarwal2017convergence} for general initial condition. The main difficulty is that for a system with infinitely many particles, the distribution of one particle may depend on the position of other particles far away. This is why  \cite{aggarwal2017convergence} considers versions of both models restricted to an interval $[-N,M]$ (which consequently have finitely many particles), proves that the convergence holds for the bounded models, and shows that the unbounded models are well approximated by the bounded ones as $M,N$ goes to infinity.  In our case, the only additional complication is the boundary, which we address in the below proof.
\end{remark}
\begin{proof}[Proof of Proposition \ref{prop:cv6vtoASEP}]
For the empty initial condition that we consider in this paper, we will show that the tail distribution of the total number of particles injected by the reservoir into the system decays exponentially fast,  uniformly in $\epsilon$, so that the convergence of the dynamics of finitely many particles is enough to prove Proposition \ref{prop:cv6vtoASEP}. Hence we will focus on carefully justifying that the boundary behaviour in the six-vertex model converges to that of half-line ASEP as in Definition \ref{def:halflineASEP}.

The first step is to apply a particle-hole inversion to the ensemble of paths defined by the stochastic six-vertex model in a half-quadrant (see Figure \ref{fig:particlehole}). This means that edges of the lattice occupied by a path become empty and empty edges become occupied by a path.
\begin{figure}
\begin{center}
\begin{tikzpicture}[scale=0.8]
\foreach \x in {1, ..., 7} {
\draw[dotted] (0,\x) -- (\x,\x) -- (\x, 8);
\draw (7.9, \x) -- (8.1,\x) node[right]{\scriptsize{$\x$}};
}
\draw[path] (1,1) -- (1,2) -- (2,2);
\draw[path] (3,3) -- (3,5) -- (4,5) --  (4,8);
\draw[path] (4,4) -- (4,5) -- (5,5);
\draw[path] (6,6) -- (6,8);
\draw[path] (7,7) -- (7,8);
\draw[axis] (8,0.5) -- (8,8) node[right]{time};

\begin{scope}[xshift=10cm]
\foreach \x in {1, ..., 5} {
\draw (\x, 0) node{\scriptsize{$\x$}};
\foreach \y in {1, ..., 7}{
\draw (\x-1,\y-0.1) -- (\x-1, \y+0.1 );
\draw[path] (0, \y) -- (5,\y);
}
}
\fill[gray] (0,1) circle(0.1);
\fill[gray] (0,3) circle(0.1);
\fill[gray] (0,4) circle(0.1);
\fill (1,4) circle(0.1);
\fill (1,5) circle(0.1);
\fill[gray] (0,6) circle(0.1);
\fill (2,6) circle(0.1);
\fill[gray] (0,7) circle(0.1);
\fill (1,7) circle(0.1);
\fill (3,7) circle(0.1);
\end{scope}
\end{tikzpicture}
\end{center}
\caption{Left: the same arrow configuration as in Figure \ref{fig:example6v} after particle-hole transformation. Right: corresponding particle configurations. The presence of the particle in gray at zero is completely determined by the rest of the configuration, ensuring that the total number of particles and the time have the same parity. A particle may be injected at site $1$ in the next step only when there is a gray particle at $0$. A particle may be removed from the system in the next step only when there is no gray particle at $0$.}
\label{fig:particlehole}
\end{figure}
Next we associate to the (particle-hole transformed) ensemble of paths the evolution of a particle configuration: Let $\zeta_i(s)$  (viewed as an occupation variable at position $i$ and time $s$)  be $1$ if there is an outgoing vertical arrow out of vertex $ (s-i,s)$ and $0$ else (see Figure \ref{fig:particlehole}). Denote by
$$ y_1>y_2> \dots >y_{M(s)}>0$$
the associated positions of the $M(s)$ particles occupying positive sites. We do not record the presence of the particle at $0$ because it can be deduced from the parity of $s+M(s)$ (indeed, $s+M(s)+\zeta_0(s)$ is always even). Note that $M(s)=s-1-\mathfrak{h}(s-1,s)$ and for even $s$, one can write $\lceil M(s)\rceil_2 = s-\mathfrak{h}(s,s)$.

The particle configuration $(y_i)_{i\in \Z_{>0}}$ can be constructed as a discrete-time Markov process where particles are updated from right to left -- i.e. $y_1$, then $y_2$, etc. -- according to the following rules:
\begin{enumerate}
\item  For $1\leqslant i\leqslant M(s)$, the $i$th particle jumps by $1$ at time $s+1$ with probability  $\e+o(\e)$, provided $y_{i-1}(s+1)>y_i(s)+1$  (with the convention that $y_0=\infty$).
\item  For $1\leqslant i\leqslant M(s)$, and for all  $1\leqslant j \leqslant y_i(s)-y_{i+1}(s)-1$ such that $y_i(s)-j>0$, the $i$th particle jumps by $-j$ at time $s+1$ with probability $(t\e)^j+o(\e^j)$.
\item If $M(s)+s$ is odd and $y_{M(s)}(s)>1$, a new particle is created at $y_{M(s+1)}(s+1)=1$ and $M(s+1)=M(s)+1$, happening with probability $\e+o(\e)$.
\item If $M(s)+s$ is odd  and $y_{M(s)}(s)=k>0$, this particle  ejects from the system at time $s+1$ and $M(s+1)=M(s)-1$ with probability $(t\e)^k+o(\e^k)$.
\item Each particle stays put with the complementary probability.
\end{enumerate}

Let $x^{\e}_i(\tau): = y_i(\tau\e^{-1})$ be the position of the particles at large times.
Let $I_{6v}(s)$ (resp. $I(\tau)$) be the total number of particles injected in the six-vertex  process $\zeta$ (resp. the ASEP process  $\eta$) between times $0$ and $s$ (resp. $0$ and $\tau$). Observe that the tail distribution of $I_{6v}(s)$ can be naively bounded by
\begin{equation}
\PP\big(I_{6v}(\tau \e^{-1})>x\big) <\PP\big(\mathfrak{Bin}(\epsilon, \tau\e^{-1})>x\big) < e^{-c x},
\label{eq:controlI}
\end{equation}
for some constant $c>0$ independent of $\epsilon$, where $\mathfrak{Bin}(p,n)$ denotes a Binomial random variable with parameters $p$ and $n$. If for any $x,k\in \Z_{\geqslant 0}$,
\begin{equation}
 \PP\Big(M\big(\tau\e^{-1}\big)=x\ \& \  I_{6v}\big(\tau\e^{-1}\big)=k\Big) \xrightarrow[\e \to 0]{} \PP\big(\NN(\tau)=x\ \& \ I(\tau)=k\big),
 \label{eq:CVtoshow}
\end{equation}
we may sum over $k$ and use \eqref{eq:controlI} to conclude that
$$ \PP\Big(M\big(\tau\e^{-1}\big)=x\Big) \xrightarrow[\e \to 0]{} \PP\big(\NN(\tau)=x\big).$$

Now we turn to the proof of \eqref{eq:CVtoshow}. Fix $k\in \Z_{\geqslant 0}$ and let us restrict our analysis to the event where  $I_{6v}(\tau\e^{-1})=k$, so that we need to consider the dynamics  of only finitely many particles $x^{\e}_{1}(\tau), \dots ,  x^{\e}_{k}(\tau)$.
It is clear (from the convergence of discrete time random walk on $\Z^d$ to a continuous time one, or more precisely from \cite{aggarwal2017convergence}) that dynamics of particles away from the boundary converge to  ASEP. According to the rule (3) of the dynamics, whenever the site $1$ is empty, a particle is added to the system at site $1$ after a random time that converges to an  exponential with rate $1/2$ (this $1/2$ comes from the fact that the particle creation is not possible at all times but only half of them, because of the parity condition). Similarly, according to the rule (4) of the dynamics, whenever the site $1$ is occupied, this particle is removed from the system after a random time that converges to an  exponential with rate $t/2$. Hence we have shown that on the event where  $I_{6v}(\tau\e^{-1})=k$, the joint distribution of $x^{\e}_{1}(\tau), \dots ,  x^{\e}_{k}(\tau)$ converge as $\e$ to zero to the distribution of ASEP particles $x_{1}(\tau), \dots ,  x_{k}(\tau)$, which implies \eqref{eq:CVtoshow}.

One can similarly deduce the convergence of $n-x-\mathfrak{h}(n-x, n)$ to $N_x(\tau)$ for any $x\in\lbrace 1, 2, \dots\rbrace$.
\end{proof}

\begin{proposition} For any time $\tau>0$ and  $x\in \R$,
\begin{equation}
 \EE\left[   \frac{1}{(-t^{x+\lceil  \NN(\tau)  \rceil_2}, t^2)_{\infty}} \right]  =  \Pf\left[ \mathsf{J}+ \fkernel_x\cdot \kernel^{\mathrm{ASEP}}\right]_{\ell^2(\Z_{\geqslant 0})},
 \label{eq:FredholmASEP}
\end{equation}
where $\fkernel_x$ is defined in \eqref{eq:deffkernel} and
\begin{subequations}
\begin{align}
\kernel^{\mathrm{ASEP}}_{11}(u,v)&= \frac{1}{(2\I\pi)^2} \iint \frac{(w-z)g(z)g(w)}{(z^2-1)(w^2-1)(zw-1)}
 \frac{\mathrm{d}z}{z^u}\frac{\mathrm{d}w}{w^v}  + \frac{1}{2\I\pi}\int  \frac{g(z)}{z^2-1} \frac{\mathrm{d}z}{z^v}\mathds{1}_{u \in 2\Z}\label{eq:K11ASEP}\\
&\hspace{4cm} -\frac{1}{2\I\pi}\int  \frac{g(z)}{z^2-1} \frac{\mathrm{d}z}{z^u}\mathds{1}_{v \in 2\Z}  + r(u,v) \nonumber \\
 \kernel^{\mathrm{ASEP}}_{12}(u,v)&= \frac{1}{(2\I\pi)^2} \iint \frac{(w-z)g(z)g(w)}{(z^2-1)w(zw-1)}   \frac{\mathrm{d}z}{z^u}\frac{\mathrm{d}w}{w^v} +  \frac{1}{2\I\pi} \int \frac{g(z)}{z^{v+1}}\mathrm{d}z\mathds{1}_{u\in 2\Z}  \label{eq:K12ASEP}\\
\kernel^{\mathrm{ASEP}}_{22}(u,v)&= \frac{1}{(2\I\pi)^2} \iint \frac{(w-z)g(z)g(w)}{zw(zw-1)} \frac{\mathrm{d}z}{z^u}\frac{\mathrm{d}w}{w^v},
 \label{eq:K22ASEP}
\end{align}
 \label{eq:KASEP}
\end{subequations}
where
$g(z) = \exp\left( \frac{(1-t)\tau}{2}\frac{z+1}{z-1}\right)$, $r(u,v)$ is defined in \eqref{eq:defr}   and the contours are chosen as positively oriented circles with radius less that $1$.
\label{prop:Fredholmcurrent}
\end{proposition}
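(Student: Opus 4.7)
The plan is to obtain the formula by taking a scaling limit of the identity in Corollary \ref{cor:fredholmhnn}. Specifically, set $a_j \equiv a = 1 - (1-t)\e/2$ and $n = \tau \e^{-1}$ along a sequence of even integers, so that the boundary/bulk weights of the half-quadrant stochastic six-vertex model match \eqref{eq:limitweights}. On the left-hand side, Proposition \ref{prop:cv6vtoASEP} gives the weak convergence $n - \mathfrak{h}(n,n) \Rightarrow \lceil \NN(\tau)\rceil_2$; since the function $y \mapsto (-t^{x+y};t^2)_\infty^{-1}$ is continuous and bounded on $\R$ (uniformly in $y \geqslant 0$, say, by $(-1;t^2)_\infty^{-1}$), convergence in distribution upgrades to convergence of the expectations
$$\EE\!\left[\frac{1}{(-t^{x+n-\mathfrak{h}(n,n)};t^2)_\infty}\right] \xrightarrow[\e\to 0]{} \EE\!\left[\frac{1}{(-t^{x+\lceil \NN(\tau)\rceil_2};t^2)_\infty}\right].$$

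For the right-hand side, the only $\e$-dependent ingredient in the integrand of $\kernel^{\complement}$ is the factor $f(z) = \bigl(\tfrac{z-a}{1-az}\bigr)^n$. Writing $a = 1-\delta$ with $\delta = (1-t)\e/2$ and using $\log\bigl(\tfrac{z-a}{1-az}\bigr) = \log(-1) + \delta\,\tfrac{z+1}{z-1} + O(\delta^2)$ (uniformly on compact subsets of $\lbrace|z|<1\rbrace$), and keeping $n$ even so that the sign cancels, one obtains the pointwise convergence
$$f(z) \xrightarrow[\e\to 0]{} \exp\!\left(\tfrac{(1-t)\tau}{2}\,\tfrac{z+1}{z-1}\right) = g(z)$$
uniformly on the integration contours (which are fixed positively oriented circles of radius $r<1$ around $0$). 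Substituting yields the pointwise convergence of every matrix entry of $\kernel^\complement$ to the corresponding entry of $\kernel^{\mathrm{ASEP}}$ given by \eqref{eq:K11ASEP}--\eqref{eq:K22ASEP}, the extra $\mathds{1}_{u=v}$ in \eqref{eq:K6V12} being absorbed into the residue contribution already accounted for when $v \in 2\Z$.

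The main point to check is that this pointwise convergence may be pulled inside the Fredholm Pfaffian. Two uniform bounds do the job. First, $\left|\tfrac{z-a}{1-az}\right|\leqslant 1$ on $\lbrace |z|\leqslant r \rbrace$ for any $a\in(0,1)$ and $r<1$ (this is the Blaschke factor inequality, with real $a$), so $|f(z)|\leqslant 1$ along all contours uniformly in $\e$ and $n$. Combined with compactness of the contours, this gives a bound $|\kernel^\complement_{ij}(u,v)| \leqslant C$ independent of $\e$ for $u,v \in \Z_{\geqslant 0}$, uniformly in $\e$. Second, a direct computation gives $\fkernel_x(j) = \prod_{k=0}^\infty \tfrac{1+t^{x+j+1+2k}}{1+t^{x+j+2k}} - 1$, which lies in $(-1,0)$ and satisfies $|\fkernel_x(j)| \leqslant C' t^{x+j}$ for large $j$. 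Applying Lemma \ref{lem:hadamard} (with $a=b=0$ on the constant bounds, and exploiting the geometric decay of $\fkernel_x$), the $k$-th term in the Fredholm Pfaffian expansion is bounded by $(2k)^{k/2}C^k (C')^k \sum_{u_1,\dots,u_k\geqslant 0}\prod_i t^{x+u_i}$, which summed over $k$ yields an absolutely convergent series uniformly in $\e$. Dominated convergence then allows one to pass to the limit $\e\to 0$ inside the series defining the Fredholm Pfaffian, completing the identification with $\Pf[\mathsf{J}+\fkernel_x\cdot \kernel^{\mathrm{ASEP}}]_{\ell^2(\Z_{\geqslant 0})}$. The subtle step is really only the uniform-in-$\e$ bound on $|f(z)|$: a priori the zero of the denominator at $z=1/a$ approaches the contour as $a\to 1$, and the Blaschke observation is exactly what rules out any blow-up in $n$.
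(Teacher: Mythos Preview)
Your argument is correct and follows the same route as the paper: take the $\e\to 0$ limit in Corollary~\ref{cor:fredholmhnn}, handle the left-hand side via Proposition~\ref{prop:cv6vtoASEP} together with the fact that the observable lies in $(0,1)$, and handle the right-hand side by the uniform convergence $f\to g$ on fixed small contours combined with Hadamard's bound and dominated convergence. Your Blaschke-factor observation $\big|\tfrac{z-a}{1-az}\big|\leqslant 1$ on $\{|z|<1\}$ makes explicit the uniform bound the paper merely asserts; the only confused remark is the aside about $\mathds{1}_{u=v}$ --- since you are (correctly) working with the contours-inside formulas of Lemma~\ref{lem:residuecomputations}, that indicator has already been cancelled by the residue at $z=1/w$ there and does not reappear, so nothing needs to be ``absorbed'' into the $\mathds{1}_{u\in 2\Z}$ term.
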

\begin{proof}
Recall that from Corollary \ref{cor:fredholmhnn},
\begin{equation}
\EPHL\left[\frac{1}{(-t^{x+n-\mathfrak{h}(n,n)}, t^2)_{\infty}}\right] =  \Pf\left[  \mathsf{J}+\fkernel_x\cdot \kernel^{\complement}\right]_{\ell^2(\Z_{\geqslant 0})}.
\label{eq:relationheightSchur}
\end{equation}

Fix $\tau>0$ and consider an even integer $n$  and the corresponding $\e$ such that $ n=\tau \e^{-1}$. Recall that $a$ is a parameter also depending on $n$ through $\e$. We will let $n$ go to infinity along the even integers.
Under the scalings considered, with $f(z) = \left(\frac{z-a}{1-az}\right)^n$ as in Lemma \ref{lem:residuecomputations},
$$ (-1)^n f(z)\xrightarrow[n \to \infty]{} g(z),$$
uniformly on compact subsets of $\C\setminus\lbrace 1\rbrace$,
and we can discard this factor $(-1)^n$ since $n$ is assumed even.

By Proposition \ref{prop:cv6vtoASEP}, the left hand side of   \eqref{eq:relationheightSchur} converges to the left hand side of \eqref{eq:FredholmASEP} as $n$ goes to infinity (note that the observables under expectations have values in $(0,1)$ thus pointwise convergence is sufficient).

On the other hand, $\kernel^{\complement}$ converges pointwise to $\kernel^{\mathrm{ASEP}}$ as $\e\to 0$. To conclude that the Fredholm Pfaffians also converge, it is enough to check that the Fredholm determinant expansion is absolutely convergent. The integrand of the kernel is uniformly (in $\e$) bounded on its contour  and the integrals are on finite contours, so that the kernel is bounded. One concludes using Hadamard's bound (see Lemma \ref{lem:hadamard}) to control the Fredholm Pfaffian expansion.
\end{proof}
\begin{remark}
We could define a Pfaffian point process with correlation kernel $\kernel^{\rm ASEP}$. The Pfaffian point process describes the edge of the Pfaffian Schur measure under the specific limit that we considered. For the (determinantal) Schur  measure, a similar limit was computed in \cite{borodin2016asep} and called discrete Laguerre ensemble due to its close connection to Laguerre orthogonal polynomials.
\end{remark}

\section{Fluctuations of the current in half-line-ASEP}
\label{sec:GOElimit}

\begin{definition}
The \emph{GOE Tracy-Widom distribution} \cite{tracy1996orthogonal} is a continuous probability distribution on $\R$ with cumulative distribution function given by (see e.g. Section 2.3 in \cite{baik2017pfaffian})
$$ F_{\rm GOE}(x)
 = \Pf\big[ \mathsf{J}- \kernel^{\rm GOE}\big]_{\mathbb{L}^2(x, \infty)},$$
where $\kernel^{\rm GOE}$ is the $2\times 2$ matrix valued kernel defined by
\begin{subequations}
\begin{align}
\kernel_{11}^{\rm GOE}(x,y) &= \frac{1}{(2\I\pi)^2} \int \mathrm{d}z\int\mathrm{d}w  \frac{z-w}{z+w} e^{z^3/3 + w^3/3 - xz -yw},\\
\kernel_{12}^{\rm GOE}(x,y) &= -\kernel_{21}^{\rm GOE}(x,y) = \frac{1}{(2\I\pi)^2}\int\mathrm{d}z\int\mathrm{d}w  \frac{w-z}{2w(z+w)} e^{z^3/3 + w^3/3 - xz -yw} \\
& +\frac{1}{2}\frac{1}{2\I\pi} \int \mathrm{d}z e^{z^3/3  - xz },\nonumber \\
\kernel_{22}^{\rm GOE}(x,y) &= \frac{1}{(2\I\pi)^2} \int\mathrm{d}z\int\mathrm{d}w  \frac{z-w}{4zw(z+w)} e^{z^3/3 + w^3/3 - xz -yw} \\
& +\frac{1}{2\I\pi} \int e^{z^3/3-zx}\frac{\mathrm{d}z}{4z} - \frac{1}{2\I\pi} \int e^{z^3/3-zy}\frac{\mathrm{d}z}{4z} -\frac{\sgn{(x-y)}}{4}\nonumber,
\end{align}
\label{eq:KGOE}
\end{subequations}
where all integration contours are $\mathcal{C}_1^{\pi/3}$, the contour formed by the union of two semi-infinite rays departing $1$ with angles $\pi/3$ and $-\pi/3$,  oriented from $1+\infty e^{-\I\pi/3}$ to $1+\infty e^{+\I\pi/3}$. Moreover, there exists a Pfaffian point process on $\R$ having this kernel and we call this point process the \emph{GOE point process}. We will denote its points by
$ \mathfrak{a}_1>\mathfrak{a}_2>\dots $
\label{def:GOEdistribution}
\end{definition}

\begin{theorem} For any $t\in[0,1)$,
\begin{equation}
\lim_{T\to \infty} \PP\left(\frac{\NN\left(\frac{T}{1-t}\right) - \frac{T}{4}}{2^{-4/3}T^{1/3}}> -x\right) = F^{\rm GOE}(x).
\label{eq:GOElimit}
\end{equation}
\label{th:GOElimit}
\end{theorem}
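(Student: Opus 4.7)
The starting point is the Fredholm Pfaffian identity in Proposition~\ref{prop:Fredholmcurrent}. Set $\tau = T/(1-t)$, and choose the parameter there (denoted $\bar x$ to avoid clashing with the theorem's variable $x$) to be
\[
 \bar x := -\tfrac{T}{4} + x\cdot 2^{-4/3} T^{1/3}.
\]
Identity \eqref{eq:FredholmASEP} then reads
\[
 \EE\!\left[\frac{1}{(-t^{\bar x + \lceil \NN(\tau)\rceil_2};t^2)_\infty}\right]
 = \Pf\!\left[\mathsf{J} + \fkernel_{\bar x}\cdot\kernel^{\mathrm{ASEP}}\right]_{\ell^2(\Z_{\geq 0})}.
\]
The plan is to show, as $T\to\infty$: (i) the left-hand side converges to $\PP\!\big(\tfrac{\NN(\tau)-T/4}{2^{-4/3}T^{1/3}} > -x\big)$; (ii) the right-hand side converges to $F_{\rm GOE}(x)$.

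\textbf{Convergence of the left-hand side.} The map $n\mapsto 1/(-t^n;t^2)_\infty$ is smooth, increasing from $0$ to $1$, with transition region of width $O(1)$ for fixed $t\in[0,1)$. Once tightness of $(\NN(\tau)-T/4)/T^{1/3}$ is established, this observable, evaluated at $n = \bar x + \lceil \NN(\tau)\rceil_2$, converges by bounded convergence to $\mathbf{1}_{\NN(\tau)>-\bar x}$ on the macroscopic scale, since $T^{1/3}\to\infty$ dominates the $O(1)$ transition width and the ceiling $\lceil\cdot\rceil_2$ only shifts the argument by at most one. Tightness can be deduced from the monotonicity of the observable in $\bar x$ together with crude upper and lower bounds on the right-hand side (themselves by-products of the steepest-descent estimates below).

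\textbf{Steepest descent on the right-hand side.} The kernel entries \eqref{eq:K11ASEP}--\eqref{eq:K22ASEP} involve contour integrals of $g(z)g(w)/(z^u w^v)$ with $g(z)=\exp\!\big(\tfrac{T}{2}\cdot\tfrac{z+1}{z-1}\big)$. For $u$ near $T/4$, the exponent $T\Psi(z;u/T)$ with $\Psi(z;\kappa):=\tfrac12\tfrac{z+1}{z-1}-\kappa\log z$ has a \emph{double} critical point at $z=-1$ when $\kappa=1/4$, with Taylor expansion $\Psi(-1+\zeta;1/4) = -\tfrac{i\pi}{4} + \tfrac{\zeta^3}{48} + O(\zeta^4)$. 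The appropriate local rescaling is therefore
\[
 z = -1 + 2^{4/3}T^{-1/3}\zeta, \qquad u = \tfrac{T}{4} - 2^{-4/3}T^{1/3}\tilde u,
\]
and analogously for $w$ and $v$, under which the phase converges to the cubic $\zeta^3/3-\tilde u\zeta$ featuring in Definition~\ref{def:GOEdistribution}. The circular $z$-contours of radius $<1$ are deformed to pass through $z=-1$ along the steepest-descent rays at angles $\pm\pi/3$, which in the rescaled variable match the contour $\mathcal{C}_1^{\pi/3}$ of the GOE kernel (up to an immaterial translation). After a block-diagonal conjugation of the matrix-valued kernel that swaps the $(1,1)$ and $(2,2)$ entries and absorbs the uneven $T^{1/3}$-powers between the three entries (an operation that leaves the Fredholm Pfaffian invariant), the discrete sum $\sum_{u\in\Z_{\geq 0}}$ becomes a Riemann sum of mesh $2^{4/3}T^{-1/3}$, and $\fkernel_{\bar x}(u)\to -\mathbf{1}_{\tilde u > x}$ pointwise, which restricts the integration domain to $(x,\infty)$. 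The Fredholm Pfaffian then converges to $\Pf[\mathsf{J}-\kernel^{\rm GOE}]_{\mathbb{L}^2(x,\infty)} = F_{\rm GOE}(x)$, establishing (ii).

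\textbf{Main obstacle.} The bulk of the work lies in the steepest-descent analysis. Three subtleties deserve particular care: (a) deforming the contour from the circle of radius $<1$ to the $\pm\pi/3$-rays through $-1$ requires passing near the poles at $z=\pm1$, $w=\pm1$ and $zw=1$, and the residues produced must be shown to either cancel or reproduce the single-integral contributions and the $r(u,v)$-summand already present in $\kernel^{\mathrm{ASEP}}$, matching the $\mathrm{sgn}$ and single-integral pieces of $\kernel^{\rm GOE}$. (b) Evaluation of $g(z)/z^u$ at $z=-1$ generates an oscillating phase $e^{-(u-T/4)i\pi}$, producing $(-1)^u$-type factors that must combine with the parity indicators $\mathbf{1}_{u\in 2\Z}$ and the $(-1)^u,(-1)^v$-terms of $r(u,v)$ to yield a well-defined limit. (c) Uniform exponential-decay bounds on the rescaled kernel entries, in the spirit of Hadamard's Lemma~\ref{lem:hadamard}, are required to interchange the $T\to\infty$ limit with the Fredholm Pfaffian expansion, particularly in the tail regime $\tilde u\gg x$ where $\fkernel_{\bar x}$ tends to $-1$ and decay must come entirely from the kernel itself.
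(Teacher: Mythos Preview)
Your proposal is essentially correct and follows the same strategy as the paper's proof: start from Proposition~\ref{prop:Fredholmcurrent}, use steepest descent at the double critical point $z=-1$ with the cubic local expansion, conjugate the matrix kernel to swap the $(1,1)$ and $(2,2)$ blocks, and control the Fredholm Pfaffian expansion via Hadamard-type bounds.

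Two small clarifications are worth making. First, your concern~(a) about residues generated during the steepest-descent contour deformation is a non-issue: the contours in \eqref{eq:K11ASEP}--\eqref{eq:K22ASEP} are already circles of radius $<1$, and the modification to rays through $-1+T^{-1/3}$ at angles $\pm\pi/3$ stays inside the unit disc, so no poles at $\pm 1$ or $zw=1$ are crossed. The single-integral and $r(u,v)$ terms are already present in $\kernel^{\mathrm{ASEP}}$ (they arose earlier in Lemma~\ref{lem:residuecomputations}) and converge termwise to the corresponding pieces of $\kernel^{\rm GOE}$. Second, your point~(b) is the genuine subtlety: the factors $(-1)^{\scaling{x}}$, $\mathds{1}_{\scaling{x}\in 2\Z}$ and the $r(u,v)$ term have no pointwise limit. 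The paper resolves this not by finding cancellations but by observing that in the Pfaffian expansion each such parity factor involves a \emph{distinct} summation variable, so they may be replaced by their averages ($\mathds{1}_{u\in 2\Z}\to\tfrac12$, $(-1)^{u+v}r(u,v)\to\tfrac14\sgn(y-x)$) when proving convergence in the integrated sense of Riemann sums rather than pointwise.
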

\begin{proof}
First remark that $\lceil \NN(\tau)\rceil_2$ and $\NN(\tau)$ differ by at most one and consequently  have the same limit under the scalings considered in the statement of the Theorem.

Recall that from Proposition \ref{prop:Fredholmcurrent},
\begin{equation}
\EE\left[   \frac{1}{(-t^{y+\lceil  \NN(\tau)  \rceil_2}, t^2)_{\infty}} \right]  = \Pf\left[ \mathsf{J}+ \fkernel_y\cdot  \kernel^{\mathrm{ASEP}}\right]_{\ell^2(\Z_{\geqslant 0})}.
\label{eq:Fredholasymptotics}
\end{equation}
Define the scaling function
$$ \scaling{x}:=  \frac{T}{4}-2^{-4/3}x T^{1/3}.$$
We can let $\tau=\frac{T}{1-t}$ and scale $y$ as $y=- \scaling{\kappa}$ for a fixed $\kappa\in \R$.
Assume that under this scaling  the right-hand-side of \eqref{eq:Fredholasymptotics} converges for every $\kappa\in \R$ to the distribution function $F^{\rm GOE}(\kappa)$.  Then, using Lemma \cite[Lemma 4.1.39]{borodin2014macdonald}, the random variable
\begin{equation}
\frac{\NN\left(\frac{T}{1-t}\right)-\frac{T}{4}}{2^{-4/3}\kappa T^{1/3}}
\label{eq:limitwewant}
\end{equation}
weakly converges as $T$ goes to infinity to a random variable with distribution function $F^{\rm GOE}(\kappa)$.
We now consider the right-hand-side of \eqref{eq:Fredholasymptotics}, the function $\fkernel_y$ multiplying the kernel is asymptotically an indicator function.
\begin{lemma} For a fixed $\kappa \in \R$, the function $\R\to (-1,0)$ defined by
$$\fkernel^{(T)}_{\kappa} : x\mapsto \fkernel_{\scaling{\kappa}}(\scaling{x})$$
converges pointwise to $x\mapsto -\mathds{1}_{x>\kappa}$.
\end{lemma}
\begin{proof}
We have
$$\fkernel^{(T)}_{\kappa}(x) = \fkernel_{\scaling{\kappa}}(\scaling{x})  = \frac{(-t^{1+2^{-4/3}T^{1/3}(\kappa-x)}; t^2)_{\infty}}{(-t^{2^{-4/3}T^{1/3}(\kappa-x)}; t^2)_{\infty}}-1.$$
As $T$ goes to infinity, if $\kappa>x$ the first argument in the $q$-Pochhammer symbols goes to $0$, so that the $q$-Pochhammer symbols themselves converge to $1$ and $\fkernel$ goes to zero. If $\kappa<x$,  we need to show that the ratio of $q$-Pochhammer symbols converges to zero.
Let
$$p(x) :=    \frac{(-t^{1+x} ; t^2)_{\infty}}{(-t^x; t^2)_{\infty}}.$$
 We have
$$ p(x)p(x+1) = \frac{1}{1+t^x},$$
which, since $p(x)$ is increasing,  shows that $p(x)$ goes to zero as $x$ goes to $-\infty$. Hence $\fkernel_{\scaling{\kappa}}(\scaling{x})$ goes to $-1$ as $T$ goes to infinity when $\kappa<x$.
\end{proof}

Thus our main task is to compute the limit of $\kernel^{\rm ASEP}$. We will use Laplace's method to find the asymptotics of the kernel in \eqref{eq:K11ASEP}, \eqref{eq:K12ASEP} and \eqref{eq:K22ASEP}.
 The limit of \eqref{eq:Fredholasymptotics} as $T$ goes to infinity can be computed as  the limit of
 $$\Pf\Big[\mathsf{J}+\fkernel^{(T)}_{\kappa} \cdot \kernel^{(T)}\Big]_{\ell^2(\mathbb{D}_T)}$$ where $\kernel^{(T)}$ is the rescaled kernel
\begin{equation} \kernel^{(T)}(x,y) := \frac{(-1)^{\scaling{x}+\scaling{y}}}{2^{-4/3}T^{1/3}} \begin{pmatrix}  \kernel_{11}^{\rm ASEP}\left(\scaling{x},\scaling{y}\right) & 2^{-4/3}T^{1/3}\kernel_{12}^{\rm ASEP}\left(\scaling{x},\scaling{y}\right)\\
2^{-4/3}T^{1/3} \kernel_{21}^{\rm ASEP}\left(\scaling{x},\scaling{y}\right) & \left(2^{-4/3}T^{1/3}\right)^2 \kernel_{22}^{\rm ASEP}\left(\scaling{x},\scaling{y}\right)
\end{pmatrix}  ,\label{eq:defKT}\end{equation}
and the domain $\mathbb{D}_T$ of the Fredholm Pfaffian is defined so that $\scaling{\mathbb{D}_T}=\Z_{\geqslant 0}$.
The presence of the factor $(-1)^{\scaling{x}+\scaling{y}}$ is a technical convenience whose purpose  shall be explained later. It does not change the value of the Fredholm Pfaffian because this extra factor has the same effect as a conjugation of the kernel (since $\scaling{x}, \scaling{y}\in\Z$, $(-1)^{\scaling{x}+\scaling{y}}=(-1)^{\scaling{x}-\scaling{y}}=(-1)^{-\scaling{x}+\scaling{y}}=(-1)^{-\scaling{x}-\scaling{y}}$). The power of $2^{-4/3}T^{1/3}$ that multiplies each entry of $\kernel^{\rm ASEP}$ is determined so that  each entry remains bounded as $T$ goes to infinity, and the global factor  $(2^{-4/3}T^{1/3})^{-1}$ in front of the kernel will disappear when we will approximate discrete sums by integrals.

Let us first examine formally the limit of $\kernel^{(T)}$. In the formulas \eqref{eq:KASEP} for $\kernel^{\rm ASEP}$, the parameter $T$ appears in the function $g(z)$ and in the variables $u,v$ through the scalings that we consider. If $\scaling{x}=\frac{T}{4}-2^{-4/3}T^{1/3}x$ as above, we can rewrite the factors depending on $T$ using
\begin{equation}
 \frac{g(z)}{z^{\scaling{x}}} = \exp\left(T \G(z) + 2^{-4/3}T^{1/3}x \log(z) \right),
 \label{eq:exponentialfactor}
\end{equation}
where $$ \G(z) = \frac{1}{2}\frac{z+1}{z-1} - \frac{1}{4}\log(z),$$
and the branch cut of the logarithm is taken on the positive reals. The function $z\mapsto \Real[\G(z)]$ will control the asymptotics of the kernel.
One can check that $\G'(-1)=\G''(-1)=0$, and $\G'''(-1)=1/8$, so that the Taylor expansion of $\G$ at $-1$ is
$$ T\G(-1 + \tilde z T^{-1/3}) \approx -T \frac{\I \pi}{4} +\frac{2^{-4}(\tilde z)^3 }{3}.$$
Moreover,
$$ 2^{-4/3}T^{1/3}x \ln(-1 + \tilde z T^{-1/3}) = 2^{-4/3}T^{1/3}x \I \pi - 2^{-4/3}x \tilde z + \mathcal{O}(T^{-1/3}), $$
so that for $z=1+ \tilde z T^{-1/3}$,
$$ \frac{g(z)}{z^{\scaling{x}}} = \exp\left(\scaling{x}\I\pi + \frac{2^{-4}(\tilde z)^3 }{3} - 2^{-4/3}x \tilde z  \right) + \mathcal{O}(T^{-1/3}). $$
The factor  $\exp(\scaling{x}\I\pi)$ will be cancelled by the $(-1)^{\scaling{x}}$ in the definition of $\kernel^{(T)}$ in \eqref{eq:defKT}, and this is why we have multiplied the kernel by a factor $(-1)^{\scaling{x}+\scaling{y}}$. Thus, under the scalings considered, in the neighborhood of the critical point, the first integrand in the formula for $\kernel_{11}^{\rm ASEP}$ will converge pointwise to
$$ \frac{\tilde z - \tilde w}{4\tilde z \tilde w (\tilde z + \tilde w)}\exp\left( \frac{\sigma^3 \tilde{z}^3}{3} + \frac{\sigma^3 \tilde{w}^3}{3} - \sigma \tilde z x -\sigma \tilde w y \right), $$
where $\sigma=2^{-4/3}$ and we have rescaled the integration variables as $z=-1 + \tilde z T^{-1/3}$ and likewise for $w$. It is easy to see that all other integrands converge pointwise to similar formulas as well, and we will produce the formulas later. However, the factors $\mathds{1}_{u \in 2\Z}$, $\mathds{1}_{v \in 2\Z}$, and $\mathds{1}_{v-u \in 2\Z+1}$ that appear in \eqref{eq:K11ASEP} and \eqref{eq:K12ASEP} do not have a pointwise limit as $T$ goes to infinity since they depend on the parity of $\scaling{x}$ and $\scaling{y}$.

Since we are interested in the convergence of $\Pf\big[\mathsf{J}-\fkernel^{(T)}_{\kappa} \cdot  \kernel^{(T)}\big]$, we do not need to prove  pointwise convergence of $\kernel^{(T)}$. Instead, we will first search for a kernel $\kernel^{\infty}$ defined on $\mathbb{L}^2(\kappa, \infty)$ for any $\kappa\in \R $ such that for any compact Lebesgue measurable set $\mathbb{K}\subset \R^k$,
\begin{equation}
  \sum_{x_1, \dots, x_k\in \mathbb{K}} \Pf\left(\kernel^{(T)}(x_i,x_j)\right)_{i,j=1}^k \xrightarrow[T\to\infty]{}  \int_{\mathbb{K}}\mathrm{d}x_1 \dots\mathrm{d}x_k \Pf\left(\kernel^{\infty}(x_i, x_j)\right)_{i,j=1}^k,
  \label{eq:cvoncompacts}
\end{equation}
where the sum on the left-hand-side is over all $k$-tuples $(x_1, \dots, x_k) \in \mathbb{K}$ such that for all $1\leqslant i\leqslant k$, $\scaling{x_i}= \frac{T}{4}-2^{-4/3}T^{1/3}x_i\in \Z$. If $k=1$, one may replace the indicator functions in $\kernel^{(T)}$ by their average value, that is
$$ \mathds{1}_{u \in 2\Z}\approx \frac{1}{2}, \ \  \mathds{1}_{v \in 2\Z}\approx \frac{1}{2}.$$
One has to be careful that in general, the average of a product is not the product of averages. However, for any $k\geqslant 1$, if we expand the Pfaffian in the left hand-side of \eqref{eq:cvoncompacts} as in \eqref{eq:defPfaffian}, we notice that indicator functions involving $x_i$ can be multiplied by an indicator function involving $x_j$ only when $j\neq i$, so that we can approximate all indicator functions by their average value. More precisely, we will use
\begin{align*}
&(-1)^{\scaling{x}}\mathds{1}_{\scaling{x}\in 2\Z} \Rightarrow \frac{1}{2},\\
&(-1)^{\scaling{x}+\scaling{y}}r(\scaling{x},\scaling{y}) \Rightarrow \frac{1}{4}\sgn(y-x),
\end{align*}
where $\Rightarrow$ means that the convergence holds in the integrated sense of \eqref{eq:cvoncompacts}. Indeed, we have
$$r(u,v) = \frac 1 4 \big((-1)^u-(-1)^v\big) + \frac 1 2 \sgn(v-u)\mathds{1}_{v-u\in 2\Z+1}.$$
The quantities $(-1)^u$ and $(-1)^v$ have average $0$ and the quantity $(-1)^{u+v}\mathds{1}_{v-u\in 2\Z+1}$ has average $-1/2$, so that $ (-1)^{u+v} r(u,v) $ has average $\frac{1}{4}\sgn(u-v)$.  Finally $\sgn(\scaling{x}- \scaling{y}) = \sgn(y-x)$.

We claim that Laplace's method shows that \eqref{eq:cvoncompacts} holds with $\kernel^{\infty}$ being the kernel obtained by approximating all integrals in $\kernel^{(T)}$ for $z$ and $w$ in a neighbourhoud of size $T^{-1/3}$ around the critical point,  taking the pointwise limit of integrands, and replacing indicator functions by their average values as prescribed above. In order to justify this rigorously, we need to check two facts:
\begin{enumerate}
\item The integration in \eqref{eq:KASEP} can be restricted to a neighbourhood of $-1$ of arbitrarily small size, making an error going to zero  as $T$ goes to infinity uniformly in $x,y$.
\item All integrands in  $\kernel^{(T)}$ can be indeed approximated  by  their pointwise limit, making an error going to zero.
\end{enumerate}
To prove (1), it is enough to show that the contour employed in $\kernel^{\rm ASEP}$ can be freely deformed to a finite contour $\mathcal{C}$ satisfying the following property:  for any $\eta>0$, there exist $\xi>0$ such that for all $z\in \mathcal{C}$ with $\vert z+1\vert>\eta$,  we have $\Real[\G(z)]<-\xi$. This will result in the integrand being exponentially small as $T$ goes to infinity outside of a $\eta$-neighborhood around the critical point. One readily checks that the circle of radius $1$ centered at $0$ is a contour line for $\Real[\G(z)]$. Moreover, the Taylor expansion at $-1$ implies that there is also a contour line departing $-1$ with angles $\pm\pi/6$, and because $\Real[\G(z)]$ is harmonic, it must enclose the singularity at $0$ and stay inside of the circle of radius $1$. Hence, one may find a contour with the desired properties between two closed contour lines (see Figure \ref{fig:contour} where a possible choice is depicted), and Taylor expansion of $\G$ around $-1$ shows that this contour $\mathcal{C}$ may depart $-1$ with any angles between $\pm\pi/6$ and $\pm\pi/2$. Since integrations must avoid the pole at $-1$, the contour needs to be modified in a region of size $\mathcal{O}(T^{-1/3})$ around $-1$, and one can choose a contour departing the point $-1+T^{-1/3}$ with angles $\pm\pi/3$, thus leading in the limit to the contour $\mathcal{C}_1^{\pi/3}$ from Definition \ref{def:GOEdistribution}.
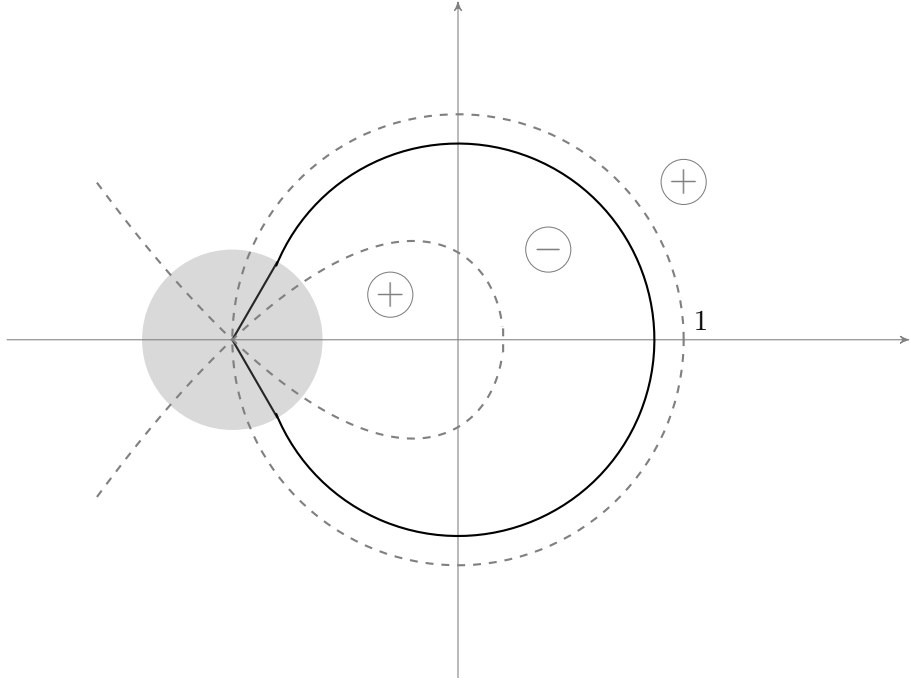
\begin{figure}
	\begin{tikzpicture}[scale=3]
	\draw[gray, ->, >=stealth'] (-2,0) --(2,0);
	\draw[gray, ->, >=stealth'] (0,-1.5) -- (0,1.5);
	\draw  (1, 0) node[above right] {$1$};
	\draw[thick, gray, dashed] (0,0) circle(1);
	\draw[thick] (-1,0) -- ++(60:0.4);
	\draw[thick] (-1,0) -- ++(-60:0.4);
	\fill[gray, opacity=0.3] (-1,0) circle(0.4);
	\draw[thick, ] (158:0.87) arc(158:-158:0.87);
	\draw[thick, gray, dashed] plot [domain = -1.6:0.2,samples=500]
	(\x,{ sqrt(3/4*(\x+1)*(\x+1)*(-\x+0.2))});
	\draw[thick, gray, dashed] plot [domain = -1.6:0.2,samples=500]
	(\x, {- sqrt(3/4*(\x+1)*(\x+1)*(-\x+0.2))});
	\draw[ gray] (-0.3, 0.2) node{{\Large $+$}};
	\draw[gray] (-0.3, 0.2) circle(0.1);
	\draw[ gray] (0.4, 0.4) node{{\Large $-$}};
	\draw[gray] (0.4, 0.4) circle(0.1);
	\draw[ gray] (1, 0.7) node{{\Large $+$}};
	\draw[gray] (1, 0.7) circle(0.1);
	\draw[gray, thick, dashed] (0.2, -0.055) -- (0.2, 0.055);
	\end{tikzpicture}
	\caption{Contour $\mathcal{C}$ used in the proof of Theorem \ref{th:GOElimit} and  Lemma \ref{lem:expoboundKPZ} (in black). The dashed lines are contour lines of $\Real[\G(z)]$ and we have indicated by the symbol \raisebox{4pt}{\protect\tikz{1}{\protect\draw[ gray] (0, 0) node{ $+$};
	\protect\draw[gray] (0,0) circle(0.2);}} or  \raisebox{4pt}{\protect\tikz{1}{\protect\draw[ gray] (0, 0) node{ $-$};
	\protect\draw[gray] (0,0) circle(0.2);}} regions where $\Real[\G(z)]$ is positive or negative. The gray area is the $\eta$-neighborhood around $-1$ . The part of the contour formed by two segments departing $-1$ is the contour used in $I_1(x)$ and the circular part is the contour used in $I_2(x)$ ($I_1(x)$ and $I_2(x)$ are defined in the proof of Lemma \ref{lem:expoboundKPZ}).}
	\label{fig:contour}
\end{figure}

Proving (2) amounts to control the error made replacing all quantities by their Taylor  approximations, and it can be done using very standard bounds (see e.g. Equations (71) and (72) in \cite{baik2017pfaffian}). Since the functions $\G$ and $\log$ admit a Taylor expansion at any order and the exponential factors of the form \eqref{eq:exponentialfactor} are simply multiplied by rational functions in $z$ and $w$, the necessary bounds are exactly the same as in previous papers ( e.g. \cite{baik2017pfaffian, aggarwal2016phase, borodin2012free}) and we do not repeat the argument here.

At this point, we have deduced that $\kernel^{(T)}$ converges to $\kernel^{\infty}$ in the sense of \eqref{eq:cvoncompacts} where
\begin{align*}
\kernel^{\infty}_{11}(x,y) &= \frac{1}{(2\I\pi)^2} \iint \mathrm{d}z\mathrm{d}w \frac{z-w}{4zw(z+w)}\exp\left( \frac{\sigma^3 \tilde{z}^3}{3} + \frac{\sigma^3 w^3}{3} - \sigma z x -\sigma  w y \right) \\
&- \frac{1}{2\I\pi} \int \frac{\mathrm{d}z}{4z}\exp\left( \frac{\sigma^3 z^3}{3}  - \sigma z y  \right)
 + \frac{1}{2\I\pi} \int \frac{\mathrm{d}z}{4z}\exp\left( \frac{\sigma^3 z^3}{3}  - \sigma z x  \right) + \frac{\sgn(y-x)}{4},\\
\kernel^{\infty}_{12}(x,y) &=\frac{1}{(2\I\pi)^2} \iint \mathrm{d}z\mathrm{d}w \frac{z-w}{2z(z+w)}\exp\left( \frac{\sigma^3 z^3}{3} + \frac{\sigma^3 w^3}{3} - \sigma z x -\sigma w y \right) \\
& +\frac{1}{2\I\pi}\int \frac{1}{2}\exp\left( \frac{\sigma^3 z^3}{3}  - \sigma  z y  \right),\\
\kernel^{\infty}_{22}(x,y)&= \frac{1}{(2\I\pi)^2} \iint \mathrm{d}z\mathrm{d}w \frac{z-w}{z+w}\exp\left( \frac{\sigma^3 z^3}{3} + \frac{\sigma^3 w^3}{3} - \sigma z x -\sigma w y \right),
\end{align*}
where the contour for $z$ and $w$ is $\mathcal{C}_1^{\pi/3}$ (see Definition \ref{def:GOEdistribution}) in all integrals.

In order to conclude from \eqref{eq:cvoncompacts} that
$$ \Pf\big[\mathsf{J}-\fkernel^{(T)}_{\kappa}\cdot  \kernel^{(T)}\big]_{\ell^2(\mathbb{D}_T)} \xrightarrow[T\to\infty]{} \Pf[\mathsf{J}-\kernel^{\infty}]_{\mathbb{L}^2(\kappa, \infty)},$$
one needs to estimate the entries of $\kernel^{(T)}$ and use Hadamard's bound (see Lemma \ref{lem:hadamard}) to conclude that the Fredholm Pfaffian expansion converges to the desired limit using  dominated convergence theorem. The following bounds are sufficient.
\begin{lemma}
 There exist positive constants $C, c, T_0$ such that for $T>T_0$ and $x, y>\kappa$,
$$\Big\vert T^{1/3} \kernel^{(T)}_{11}(x,y)\Big\vert < C ,\ \
\Big\vert T^{1/3}  \kernel^{(T)}_{12}(x,y)\Big\vert < C \exp\big(-c y \big),\ \
\Big\vert T^{1/3} \kernel^{(T)}_{22}(x,y)\Big\vert < C \exp\big(-c x - c y \big). $$
\label{lem:expobound}
\end{lemma}
\begin{proof}
 These bounds are obtained in a very similar way as in Lemma 5.11 and Lemma 6.4 in \cite{baik2017pfaffian} where the structure of the kernel considered is the same as ours.
Let us explain how the idea works for $\kernel^{(T)}_{22}$.
Using the definition of the rescaled kernel $\kernel^{(T)}$ in \eqref{eq:defKT} and  the formula for $\kernel^{\rm ASEP}_{22}$ in \eqref{eq:K22ASEP}, we have for some constant $C$,
\begin{multline*}
\big\vert  T^{1/3} \kernel^{(T)}_{22}(x,y) \big\vert   \leqslant \\ C T^{2/3}
\Bigg\vert \frac{1}{(2\I\pi)^2} \iint \frac{w-z}{zw(zw-1)}\exp\left(T(\G(z)-\G(w))+2^{-4/3}T^{1/3}(x\log(z) -y\log(w))\right) \mathrm{d}z\mathrm{d}w\Bigg\vert.
\end{multline*}
 The integrations can be restricted to a neighborhood of size $\eta$ around $-1$, where $\eta$ can be arbitrarily small, by doing an error bounded by  $e^{-c_1(\eta) T- c_2(\eta)T^{1/3} x}$ for some constants
$c_1(\eta), c_2(\eta)$. Now we make the change of variables $z=-1+\tilde z T^{-1/3}$ and likewise for $w$.
Using Taylor expansion of $\G$ and approximations of the other factors in the integrand, there exist a constant $C$ such that for $T$ large enough,
\begin{multline*}
\big\vert  T^{1/3} \kernel^{(T)}_{22}(x,y) \big\vert   \leqslant \\
 \frac{C}{(2\I\pi)^2} \iint \bigg\vert \frac{z-w}{z+w}\bigg\vert  \bigg\vert  \exp\left(   \frac{\sigma^3 z^3}{3} + \frac{\sigma^3 w^3}{3} - \sigma  z x -\sigma w y  +CT^{-1/3}(z^4+w^4)  \right)\bigg\vert \mathrm{d}z\mathrm{d}w.
\end{multline*}
Let $\mathcal{C}_a^{\pi/3}$ be the contour formed by two semi-infinite rays  departing $a$ with directions $\pm\pi/3$.
The integration contour in the integral above can be chosen as the intersection of $\mathcal{C}_a^{\pi/3} $ -- for some $a$ that can be freely chosen in $\R_{>0}$ -- with the ball of radius  $T^{1/3}\eta$ around $-1$. Note that the prefactor $T^{2/3}$ canceled with the Jacobian of the change of variables. On this contour,  $T^{-1/3}z^4$ is bounded by $\eta \vert z\vert^3$, so that $\exp\left( T^{-1/3}z^4\right)$  will not compensate the decay of $\exp\left(\Real[z^3]\right)$ for $\eta$ small enough. Moreover the factor $(z-w)/(z+w)$ stays bounded along the contour. Hence there exists a constant $C$ such that
\begin{equation}
\big\vert  T^{1/3} \kernel^{(T)}_{22}(x,y) \big\vert   \leqslant
 \frac{C}{(2\I\pi)^2} \iint \bigg\vert \exp\left(z^3/3+w^3/3 -z x - w y\right)\bigg\vert \mathrm{d}z\mathrm{d}w.
 \label{eq:tobound}
 \end{equation}
For $x$ and $y$ positive, it is easy to conclude from \eqref{eq:tobound} that $\big\vert  T^{1/3} \kernel^{(T)}_{22}(x,y) \big\vert$ has exponential decay in $x$ and $y$.
\end{proof}
To conclude the proof of Theorem \ref{th:GOElimit}, we have to show that
$$ \Pf[\mathsf{J}-\kernel^{\infty}]_{\mathbb{L}^2(\kappa, \infty)}=\Pf[\mathsf{J}-\kernel^{\rm GOE}]_{\mathbb{L}^2(\kappa, \infty)}.$$
Notice that the Fredholm Pfaffian of  $\kernel^{\infty}$ does not depend on $\sigma$ so that  one may take $\sigma=1$. In this case
$$ \kernel^{\infty}_{11}(x,y)=\kernel_{22}^{\rm GOE}(x,y),\ \  \kernel^{\infty}_{12}(x,y)=\kernel^{\rm GOE}_{12}(y,x)= -\kernel_{21}^{\rm GOE}(x,y), \ \ \kernel^{\infty}_{22}(x,y)=\kernel_{11}^{\rm GOE}(x,y).$$
Exchanging the even rows and columns with the odd rows and columns, one multiplies the Pfaffian of $\big(\kernel^{\infty}(x_i, x_k)\big)_{i,j=1}^k$ by $(-1)^k$. Multiplying even rows and even columns by $-1$, the Pfaffian gets multiplied by $(-1)^k$ another time, and one arrives at $\kernel^{\rm GOE}$.
\end{proof}

\section{KPZ equation on $\R_{+}$}
\label{sec:KPZ}

In \cite{corwin2016open}, the half-space open ASEP with specially tuned weak asymmetry and weak boundary conditions is shown to converge to the half-space KPZ equation with Neumann type  boundary conditions.  That paper deals with a set of initial data which they call ``near equilibrium'', and  a boundary condition that makes the boundary repulsive, in a sense that we shall explain shortly. Both the initial data (``narrow wedge'' type) and boundary condition with which we work are not coverged by the results of \cite{corwin2016open}.
In fact the type of initial data we consider requires a slightly different scaling (a logarithmic correction at the level of the height function, see Remark \ref{rem:differentscaling}).

We show in this section that the observable of ASEP (Definition \ref{def:halflineASEP}) that is expected (in light of the \cite{corwin2016open} results) to converge to the solution of half-space KPZ equation has a weak limit whose distribution can be characterized. 
When the first version of this paper was posted, the identification of this limit with the KPZ equation solution had not yet been proved. However, subsequently, the work of \cite{corwin2016open} was extended by \cite{parekh2017kpz} to include more general initial data (including narrow wedge) and general boundary condition parameter $A\in \R$ (which includes the relevant case for us). We have not modified this section to reflect the work of \cite{parekh2017kpz} (besides adding a few parenthetical notes or footnotes). Combining our Corollary 7.7 with \cite[Theorem 1.2]{parekh2017kpz} yields \cite[Corollary 1.3]{parekh2017kpz} which is the one-point Laplace transform for the half-space KPZ equation from narrow wedge initial data with boundary parameter $A=-1/2$. 

\begin{definition}[Half-space Stochastic heat equation]\label{def.mild}
We say that $\mathscr Z(T,X) $ is {\it a mild solution} to the SHE
\begin{equation} \label{e:SHE}
\partial_T \mathscr Z = \tfrac12 \Delta \mathscr Z + \mathscr Z \dot W\;,
\end{equation}
on $\R_+$ with delta initial data at the origin and Robin boundary condition with parameter $A\in \R$
\begin{equation}\label{e:SHERobin0}
\partial_X \mathscr Z(T, X)\Big\vert_{X=0} = A  \mathscr Z(T, 0)  \qquad (\forall T > 0)
\end{equation}
if $\mathscr Z(T, \cdot)$ is adapted to the filtration $\sigma \{\mathscr Z(0,\cdot), W|_{[0,T]}\}$ and
\begin{equation}\label{e:SHE-mild}
 \mathscr Z(T,X) = \mathscr P^R_T(X,0)	 + \int_0^T \!\!\! \int_0^\infty \!\!\!  \mathscr P^R_{T-S} (X,Y) \,\mathscr Z(S,Y) \, \mathrm{d}W_S(\mathrm{d}Y)
\end{equation}
where the last integral is the  It\^o integral with respect to the cylindrical Wiener process $W$,
and $ \mathscr P^R$ is the  heat kernel satisfying the Robin boundary condition
\begin{equation} 
\partial_X \mathscr P^R_T (X,Y)\Big\vert_{X=0} =A  \mathscr P^R_T (0,Y) \qquad (\forall T > 0,Y>0) \;.
\end{equation}
\end{definition}
The Hopf-Cole solution to the half-space KPZ equation with Neumann boundary condition with parameter $A$ is defined to be $\log \mathscr Z$.  It was shown in \cite{corwin2016open} that for  $A\geqslant 0$ the half space SHE admits a unique solution. It has the property that
almost-surely, $\mathscr Z(T,X)>0$ for all $T>0$ and $X\geqslant 0$, so that the logarithm of $\mathscr Z(T,X)$ is well defined. When $A>0$, the heat kernel $\mathscr P^R_T$ corresponds to the transition kernel of a Brownian motion killed at the origin at rate $A$ times the local time. This is why the boundary condition with $A>0$ can be qualified as repulsive boundary condition. When $A<0$, the kernel  $\mathscr P^R_T$ corresponds to the transition kernel of Brownian motions duplicating at the origin at rate $\vert A\vert$ times the local time. When $A=0$, $\mathscr P^R_T$ is the transition kernel of a Brownian motion reflected at the origin.
\subsection{Convergence of ASEP to the SHE}
\label{sec:cvnotationsCS}
In order to relate ASEP with the SHE, we must perform a microscopic version of the Cole-Hopf transform (also called the G\"{a}rtner transform). We will work presently with the notation of \cite{corwin2016open} and then match to that of the present paper.
\begin{definition}[Microscopic Cole-Hopf / G\"{a}rtner transform]
Consider ASEP with left jump rate $\q$, right jump rate $\p$, input rate from the reservoir $\ratealpha$ and output rate to the reservoir $\rategamma$ (see Figure \ref{fig:generalASEP}). Define a height function $h_\tau(x)$ at time $\tau$ and position $x\in \Z_{\geqslant 0}$ by
$$
h_{\tau}(x) = h_{\tau}(0) + \sum_{y=1}^{x} \hat{\eta}_{\tau}(y),
$$
where $h_{\tau}(0)$ is 2 times the net number of particles that are removed (i.e. the number of particles that move into the source minus the number that move out of the source) from site $x=1$ during the time interval $[0,\tau]$ (in particular $h_0(0)=0$), and $\hat{\eta}_{\tau}(x) = 2\eta_{\tau}(x)-1$ is $+1$ if there is a particle at $x$ at time $\tau$ and $-1$ if there is not. For empty  initial data $h_0(x) = -x$.

For $\tau\geqslant 0$ and $x\in \Z_{\geqslant 0}$ define the microscopic Cole-Hopf transform of ASEP to be
$$
Z_\tau(x) = \exp\big(-\uplambda h_\tau(x) + \upnu \tau\big),
$$
where
$$
\uplambda = \frac{1}{2}\log\frac{\q}{\p},\qquad  \upnu = \p+\q-2\sqrt{\p\q}.
$$
\label{def:microHopfCole}
\end{definition}

The reason for the definition of $Z_{\tau}(x)$ is that inside $\Z_{\geqslant 0}$ (i.e. away from the boundary) this transformation of ASEP satisfies a discrete SHE. Assuming further that the boundary rates satisfy \eqref{eq:Liggettscondition}, it is possible to extend the discrete SHE to the boundary of $\Z_{\geqslant 0}$ in terms of a discrete Robin boundary condition (see \cite[Section 3]{corwin2016open}). In particular,
$$
Z_\tau(x) = \sum_{y\in \Z_{\geqslant 0}} p^{R}_{\tau}(x,y)Z_0(y) + \int_0^{\tau} \sum_{y\in \Z_{\geqslant 0}} p^{R}_{\tau-s} (x,y) dM_s(y)
$$
where, for each $y$, $M_s(y)$ are explicit martingales, and $p^R_{\tau}(x,y)$ is the half-line discrete heat kernel with Robin boundary condition $p^{R}_{\tau}(-1,y) = \mu \ p^{R}_{\tau}(0,y)$ (see \cite[Lemma 4.5]{corwin2016open} for an explicit formula for this). The parameter $\mu$ is related to $\ratealpha$ via the equality $\ratealpha = \p^{3/2}(\p^{1/2} - \mu \sqrt{\q}) (\p-\q)^{-1}$ where we also assume \eqref{eq:Liggettscondition}, i.e.  $\ratealpha/\p+\rategamma/\q=1$.

We now introduce weakly asymmetric scaling of half-line ASEP.
\begin{definition}\label{def:wasep}
Introduce a small parameter $\eps>0$ and then scale
$$
\p=\tfrac{1}{2}e^{\sqrt{\eps}}, \qquad \q=\tfrac{1}{2}e^{-\sqrt{\eps}}, \qquad \ratealpha = \frac{\p^{3/2}(\sqrt{\p}-(1-A\eps)\sqrt{\q})}{\p-\q},\qquad \rategamma = \frac{\q^{3/2}(\sqrt{\q}-(1-A\eps)\sqrt{\p})}{\q-\p}.
$$
Write $Z^{\eps}_\tau(x)$ to denote $Z_{\tau}(x)$ with parameters given in terms of the above $\eps$ parameterizations, and write the time-space rescaled version of $Z$ as
\begin{equation}
\mathscr Z^{\eps}(\hat \tau, \hat x)= \eps^{-1/2} Z^{\eps}_{\eps^{-2}\hat{\tau}}(\eps^{-1}\hat{x}).
\label{eq:weakscalingZ}
\end{equation}
\end{definition}
For small $\eps$, we have the approximations
$$
\ratealpha = \tfrac{1}{4}+(\tfrac{3}{8} + \tfrac{1}{4}A)\sqrt{\eps} + \mathcal{O}(\eps), \qquad \rategamma = \tfrac{1}{4}-(\tfrac{3}{8} + \tfrac{1}{4}A)\sqrt{\eps} + \mathcal{O}(\eps),
$$
and
$$
\uplambda = -\sqrt{\eps}, \qquad \upnu =  \frac{\eps}{2} + \frac{\eps^2}{24} + \mathcal{O}(\eps^3).
$$

For ASEP with empty initial data (i.e. $h_0(x) = -x$ for $x\geqslant 0$), as $\eps\to 0$, we expect that $\mathscr Z^{\eps} \Rightarrow \mathscr Z$ as a space time process, where $\mathscr Z$ is the unique mild solution to the SHE on $\R_{\geqslant 0}$ with delta initial data and Robin boundary condition with parameter $A$ at the origin. Analogous result was proved when $A\geqslant 0$ and for near equilibrium initial data as Theorem 2.17 in \cite{corwin2016open}.

\begin{remark}
The scaling considered in \eqref{eq:weakscalingZ} differs from \cite[Definition 2.16]{corwin2016open} by an extra factor $\eps^{-1/2}$ multiplying $Z$. This comes from the fact that the initial condition that we consider is not ``near equilibrium'',  and this extra factor ensures that $Z^{\eps}_{\tau=0}(\cdot)$ converges to the delta function.  This is a very similar situation as in the works of \cite{bertini1997stochastic} and \cite{amir2011probability} regarding the full-space ASEP and KPZ equation. The first case requires a similar ``near equilibrium'' type of initial data, while the second extends it to include step initial data. The basic argument of \cite[Section 3]{amir2011probability} shows that for short time (that is, in the scaling in which ASEP converges to the KPZ equation) the step initial data height function becomes ``near equilibrium''. Then, applying the existing convergence results of \cite{bertini1997stochastic} one gets a consistent family of measures which can be extended back to time 0 and shown to coincide with the desired ``narrow wedge'' initial data KPZ equation. We leave a rigorous proof of this for future work (and hence we will not use the convergence result in the present paper).
\label{rem:differentscaling}
\end{remark}

\subsection{Matching notations}

We now translate the convergence stated in Section \ref{sec:cvnotationsCS} in terms of the notations used throughout the present paper. Let us consider the ASEP process considered in Definition \ref{def:halflineASEP}, that is half-line ASEP with rates $\p=1, \q=t, \ratealpha=1/2, \rategamma=t/2$. Let us set $t=e^{-2\sqrt{\eps}}$ and rescale the time by $2e^{-\sqrt{\eps}}$, so that effectively, the jump rates become
$$ \p = \frac{1}{2}e^{\sqrt{\eps}}, \ \q = \frac{1}{2}e^{-\sqrt{\eps}}, \  \ratealpha = \frac{1}{4}e^{\sqrt{\eps}}, \ \rategamma = \frac{1}{4}e^{-\sqrt{\eps}}.$$
If we match those rates with the rates from Definition \ref{def:wasep},
this corresponds to choosing the boundary parameter $A=-1/2+\mathcal{O}(\eps)$. This is why we cannot apply the results of \cite{corwin2016open}, since $A$ is assumed to be nonnegative there.
Since we have rescaled the time by $2e^{-\sqrt{\eps}}$, we have
$$ h_{\tau}(x) = -2 N_{x+1}(\tau e^{\sqrt{\eps}}/2)-x,$$
in the sense that the space-time processes have the same distribution,
where $N_x(\tau)$ is defined in Definition \ref{def:halflineASEP} and $h_{\tau}(x)$ is defined in Definition \ref{def:microHopfCole}. Then, under the scalings of Definition \ref{def:wasep},
$$ \mathscr Z^{\eps}(\hat \tau, \hat x) = \eps^{-1/2} \exp\Big( -2\sqrt{\eps} N_{1+\eps^{-1}\hat x}(\eps^{-2}\hat \tau e^{\sqrt{\eps}}/2 )  - \eps^{-1/2} \hat x + \eps^{-1}\hat{\tau}/2 + \hat{\tau}/24 + \mathcal{O}(\e)\Big).$$
At this point, it is more convenient to re-parametrize using $\tau=\hat \tau e^{\sqrt{\eps}}$ and  $\e=2\sqrt{\eps}$  so that $t=e^{-\e}$.
\begin{definition}
For $t=e^{-\e}$, under the notations of Definition \ref{def:halflineASEP}, we define the space-time process
$$ \mathcal Z^{\e}_{\tau}(x) := 2 \e^{-1} \exp\Big(- \e N_{1+4x\e^{-2}}(8\e^{-4}  \tau  )  - 2\e^{-1} x + 2\e^{-2}\tau - \tau \e^{-1} + 7\tau/24\Big).$$
\label{def:weaklyscalingclean}
\end{definition}
We expect that $\mathcal Z^{\eps} \Rightarrow \mathscr Z$ as a space time process, where $\mathscr Z$ is the unique mild solution to the SHE on $\R_{\geqslant 0}$ with delta initial data and Robin boundary condition with parameter $A=-1/2$ at the origin.

\subsection{Multiplicative functional of the GOE and KPZ equation on $\R_{+}$}

One can readily adapt Proposition \ref{prop:FredholmHL} and Proposition  \ref{prop:Fredholmcurrent}, changing $-t^x$ into $\zeta t^x$, so that for any
$y\in \R$ and  $\zeta\in \C\setminus \R_{>0}$,
\begin{equation}
\EE\left[\frac{1}{(\zeta t^{y+\lceil \NN(\tau)\rceil_2}, t^2)_{\infty}}\right] =  \Pf\left[  \mathsf{J}+\hkernel_y\cdot  \kernel^{\rm ASEP}\right]_{\ell^2(\Z_{\geqslant 0})},
\label{eq:Laplacecurrent}
\end{equation}
where
\begin{equation}
\hkernel_y(j) = \frac{(\zeta t^{y+j+1}; t^2)_{\infty}}{(\zeta t^{y+j}; t^2)_{\infty}}-1.
\end{equation}
We consider now the asymptotic behaviour of this identity in the weakly asymmetric regime.
\begin{theorem} Under the scalings
$$ t= e^{-\e}, \ \ \tau=\frac{\e^{-3}\hat{\tau}}{1-t}, \ \  $$
the random variable
\begin{equation}
\mathcal{U}_{\e}(\hat \tau)  = \frac{ t^{\big( \NN(\tau)- \frac{\e^{-3}\hat\tau}{4}\big) }}{1-t^2}
\label{eq:scalingxi}
\end{equation}
weakly converges to a random variable $\mathcal{U}(\hat \tau)$ such that for $\zeta<0$,
\begin{equation}
\EE\left[ \exp\left( \zeta\ \mathcal{U}(\hat \tau) \right)  \right]  = \Pf\left[  \mathsf{J}+  \gkernel\cdot  \kernel^{\rm GOE}\right]_{\mathbb{L}^2(\R)}, \ \ \
\label{eq:LaplaceKPZ}
\end{equation}
where
$$ \gkernel(x)  =  \frac{1}{\sqrt{1- \zeta e^{\hat{\sigma}x}}}-1, \quad \text{  and  }\quad \hat{\sigma}=2^{-4/3} (\hat{\tau})^{1/3}.$$
\label{th.LaplaceKPZ}
\end{theorem}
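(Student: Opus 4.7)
The starting point is the identity \eqref{eq:Laplacecurrent}, which expresses a $t$-deformed Laplace transform of the current as a Fredholm Pfaffian. The plan is to choose the free parameter $y$ in \eqref{eq:Laplacecurrent} so that, under the weakly asymmetric scaling $t=e^{-\e}$ and $\tau=\e^{-3}\hat\tau/(1-t)$, the left-hand side becomes a classical Laplace transform of $\mathcal{U}_\e(\hat\tau)$, and the right-hand side becomes a Fredholm Pfaffian involving the GOE kernel. Concretely I would set $y=-\e^{-3}\hat\tau/4$ (parity corrections of size $O(1)$ coming from $\lceil\cdot\rceil_2$ are harmless since $t^{\lceil\NN\rceil_2-\NN}=1+O(\e)$). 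This matches precisely the centering in the definition of $\mathcal{U}_\e$ and is the analogue, at $\kappa=0$, of the scaling $y=-\scaling{\kappa}$ used in the proof of Theorem~\ref{th:GOElimit}, with $T=\e^{-3}\hat\tau$ so that $T^{1/3}/2^{4/3}=\hat\sigma/\e$.

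For the left-hand side of \eqref{eq:Laplacecurrent}, I would use the elementary ``$q$-exponential'' limit: if $u=u(\e)$ satisfies $u/(1-t^2)\to c\in\C$, then $1/(u;t^2)_\infty\to e^{c}$; the logarithm is $-\sum_{k\ge 0}\log(1-ut^{2k})\approx \sum_k u t^{2k}=u/(1-t^2)$, and higher order terms go to zero. Applied to $u=\zeta t^{y+\lceil\NN\rceil_2}=\zeta(1-t^2)\mathcal{U}_\e(\hat\tau)(1+O(\e))$, this gives pointwise convergence of the integrand to $e^{\zeta \mathcal{U}(\hat\tau)}$ once the weak convergence of $\mathcal{U}_\e$ is established. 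Since $\zeta<0$ and $\mathcal{U}_\e\geqslant 0$, the integrand is in $[0,1]$ and bounded convergence will give convergence of the expectations once we have tightness.

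For the right-hand side I would perform the same saddle-point analysis as in the proof of Theorem~\ref{th:GOElimit}: the critical point $z=-1$ of $\G(z)=\frac12\frac{z+1}{z-1}-\frac14\log z$ is unchanged, the same steep-descent contour through $z=-1+T^{-1/3}$ with angles $\pm\pi/3$ works, and after rescaling variables by $T^{-1/3}$ the rescaled kernel $\kernel^{(T)}$ (as in \eqref{eq:defKT}) converges in the sense of \eqref{eq:cvoncompacts} to $\kernel^{\rm GOE}$ with $\sigma=\hat\sigma$. The only new ingredient is the pointwise limit of the scalar factor $\hkernel_y$. Writing $\hkernel_y(j)=\prod_{k\ge0}\frac{1-\zeta t^{y+j+1+2k}}{1-\zeta t^{y+j+2k}}-1$ and substituting $j=\scaling{x}$ gives $\zeta t^{y+j}=\zeta e^{\hat\sigma x}+O(\e)$, after which a Riemann sum argument with step $2\e$ yields
\begin{equation*}
\log\big(1+\hkernel_y(\scaling{x})\big)\;\longrightarrow\;-\tfrac12\log\bigl(1-\zeta e^{\hat\sigma x}\bigr),
\end{equation*}
so $\hkernel_y(\scaling{x})\to \gkernel(x)$ pointwise, exactly as in the statement. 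To promote this pointwise convergence to the convergence of the Fredholm Pfaffians I would reuse the exponential bounds of Lemma~\ref{lem:expobound} on the entries of $\kernel^{(T)}$ together with the uniform bound $|\hkernel_y(\scaling{x})|\leqslant 1$ (which follows from $\hkernel_y(\scaling{x})\in(-1,0)$ for $\zeta<0$), apply Hadamard's inequality (Lemma~\ref{lem:hadamard}), and invoke dominated convergence in the Fredholm Pfaffian expansion~\eqref{eq:defFredholmPfaffian}. The exponential decay of $\kernel^{\rm GOE}$ at $+\infty$ compensates for the fact that $\gkernel(x)\to -1$ as $x\to+\infty$, while $\gkernel(x)$ itself decays like $-\tfrac{\zeta}{2}e^{\hat\sigma x}$ as $x\to-\infty$, giving absolute convergence of the series defining $\Pf[\mathsf{J}+\gkernel\cdot\kernel^{\rm GOE}]_{\mathbb{L}^2(\R)}$.

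Finally, weak convergence of $\mathcal{U}_\e(\hat\tau)$ follows from the L\'evy continuity-type statement for Laplace transforms: the functions $\zeta\mapsto \EE[e^{\zeta\mathcal{U}_\e}]$ are uniformly bounded in $[0,1]$ and I have shown convergence for every $\zeta<0$ to the explicit Fredholm Pfaffian in \eqref{eq:LaplaceKPZ}, which is continuous in $\zeta$ at $\zeta=0$ (value $1$), so it is the Laplace transform of a unique probability measure $\mathcal{U}(\hat\tau)$ on $\R_{\geqslant 0}$, and $\mathcal{U}_\e\Rightarrow\mathcal{U}$. The main technical obstacle is the saddle-point step on the right-hand side: one must transfer all the $T\to\infty$ estimates from the proof of Theorem~\ref{th:GOElimit} to this setting with care for the integrable singularity of $\gkernel$ at $\log(-\zeta)/\hat\sigma$ when $\zeta<0$, and verify that the sum-to-integral approximation \eqref{eq:cvoncompacts} goes through with the smooth symbol $\hkernel_y$ in place of the indicator limit of $\fkernel_y$; the parity-averaging trick for the indicators in $\kernel^{\rm ASEP}$ used in the proof of Theorem~\ref{th:GOElimit} carries over verbatim once $\hkernel_y$ is shown to be slowly varying on the lattice scale, which follows from $\hkernel_y(j+1)-\hkernel_y(j)=O(\e)$.
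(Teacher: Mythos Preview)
Your overall architecture matches the paper's: set $y=-\e^{-3}\hat\tau/4$, identify the $q$-exponential limit on the left, push the saddle-point analysis of Theorem~\ref{th:GOElimit} through on the right with $\hkernel_y\to\gkernel$, and close with a Laplace-transform continuity argument. The computation of the pointwise limit of $\hkernel_y$ and the identification $T=\e^{-3}\hat\tau$ are both correct.

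There is, however, a genuine gap in your control of the Fredholm Pfaffian. Unlike Theorem~\ref{th:GOElimit}, the limiting Pfaffian here lives on $\mathbb{L}^2(\R)$, not on $\mathbb{L}^2(\kappa,\infty)$: the symbol $\gkernel$ does not cut off at any finite $\kappa$. Lemma~\ref{lem:expobound}, which you propose to reuse, only bounds $\kernel^{(T)}$ for $x,y>\kappa$ and says nothing about $x,y\to-\infty$. In that regime the kernel does \emph{not} stay bounded: its entries are built from discrete Airy-type integrals whose amplitude grows as the argument goes to $-\infty$ (just as $\Ai'(x)\sim |x|^{1/4}$ on the negative axis). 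The paper handles this with two additional lemmas: an $\e$-uniform exponential bound $|\hkernel_y(\scaling{x})|\leqslant |\zeta|e^x$ for the symbol (not merely $|\hkernel_y|\leqslant 1$), and, crucially, a $T$-uniform polynomial bound $|T^{1/3}\kernel^{(T)}_{ij}(x,y)|\leqslant C|xy|^c$ valid for all $x,y$. The latter requires a separate saddle-point argument (splitting into the ranges $T\leqslant r|x|^6$ and $T>r|x|^6$ and moving the critical point with $x$) and is the most delicate analytic step in the whole proof; without it the dominated-convergence argument over $\mathbb{D}_T$ does not close.

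A minor point: for $\zeta<0$ one has $1-\zeta e^{\hat\sigma x}=1+|\zeta|e^{\hat\sigma x}>1$, so $\gkernel$ has no singularity; the ``integrable singularity at $\log(-\zeta)/\hat\sigma$'' you flag does not occur in this regime.
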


Before proving the Theorem, let us interpret further the result. Notice that we have
$$\bar{\mathcal{U}}_{\e}(T):= \mathcal{U}_{\e}\big((1-t)\e^{-1} T\big) = \frac{\e^{-1}}{2} \exp\Big( -\e N(\e^{-4} T) + \frac{T}{4}\e^{-2} - \frac{T}{8}\e^{-1} + \frac{T}{24} +\mathcal{O}(\e)\Big).$$
One the one hand, $\bar{\mathcal{U}}_{\e}(\tau)$ converges\footnote{The proof of Theorem \ref{th.LaplaceKPZ} is still valid when $\hat\tau $ is not a constant but converges to a constant as $\e$ goes to $0$. Hence by letting $\hat{\tau} = (1-t)\e^{-1} T$, we obtain that $\bar{\mathcal{U}}_{\e}(T)$ converges to $\mathcal{U}(T)$.} to $\mathcal{U}(\tau)$, a certain random variable having a complicated yet explicit distribution. On the other hand, one may check that
$$\mathcal{Z}^{\e}_{\tau}(0) = 4 \ \bar{\mathcal{U}}_{\e}(8 \tau) \exp\big(-\tau/24+\mathcal{O}(\e)\big).$$
Thus, if we define
$$ \mathcal{H}(\tau) = \log\big(4\  \mathcal{U}(8\tau) \big) -\frac{\tau}{24},$$
we expect\footnote{This is now proved in \cite[Theorem 1.2]{parekh2017kpz}.} that $\mathcal{H}(\tau)$ has the same distribution as $\mathscr H(\tau, 0)$ from Definition \ref{def.mild} with parameter $A=-1/2$. Furthermore, the Laplace transform of $e^{\mathcal{H}(\tau)}$ can be expressed as a multiplicative functional of the GOE process.
\begin{corollary}
For any $\tau>0$ and $z > 0$,
$$\EE\left[ \exp\left( \frac{-z}{4} \exp\left(\frac{\tau}{24} + \mathcal H (\tau)\right)\right) \right]   = \EE\left[ \prod_{i=1}^{+\infty} \frac{1}{\sqrt{1+z \exp\left((\tau/2)^{1/3}  \mathfrak{a}_i\right)}} \right],$$
where $\lbrace \mathfrak{a}_i\rbrace_{i=1}^{\infty} $ forms the GOE point process (see Definition \ref{def:GOEdistribution}).
\label{prop:multiplicativefunctional}
\end{corollary}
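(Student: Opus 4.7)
The plan is to reduce the left-hand side to the Laplace transform of $\mathcal{U}$ and then invoke Theorem \ref{th.LaplaceKPZ} followed by the standard identity relating Fredholm Pfaffians of multiplicatively perturbed kernels to expectations of multiplicative functionals of Pfaffian point processes. First, by the definition $\mathcal{H}(\tau) = \log(4\,\mathcal{U}(8\tau)) - \tau/24$, the exponent inside the left-hand side simplifies as $\exp(\tau/24 + \mathcal{H}(\tau)) = 4\,\mathcal{U}(8\tau)$, so the left-hand side is $\EE\bigl[\exp(-z\,\mathcal{U}(8\tau))\bigr]$.

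Next I would apply Theorem \ref{th.LaplaceKPZ} with $\hat\tau = 8\tau$ and $\zeta = -z < 0$. A short calculation gives $\hat\sigma = 2^{-4/3}(8\tau)^{1/3} = (\tau/2)^{1/3}$, and the kernel $\gkernel$ becomes $\gkernel(x) = (1 + z e^{(\tau/2)^{1/3} x})^{-1/2} - 1$, yielding
$$\EE\bigl[\exp(-z\,\mathcal{U}(8\tau))\bigr] = \Pf\bigl[\mathsf{J} + \gkernel \cdot \kernel^{\rm GOE}\bigr]_{\mathbb{L}^2(\R)}.$$
The final step is to recognize this Fredholm Pfaffian as a multiplicative functional of the GOE point process. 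This is the Pfaffian analog of the determinantal identity and is exactly the statement of \cite[Theorem 8.2]{rains2000correlation} applied to the GOE Pfaffian point process $\{\mathfrak a_i\}$ with correlation kernel $\kernel^{\rm GOE}$ (the same identity is used in the Schur setting as \eqref{eq:multiplicativefunctionalsSchur}): for a suitable function $g$ one has
$$\EE\Bigl[\prod_i (1 + g(\mathfrak a_i))\Bigr] = \Pf\bigl[\mathsf{J} + g \cdot \kernel^{\rm GOE}\bigr]_{\mathbb{L}^2(\R)}.$$
Specializing to $g = \gkernel$ gives $\prod_i (1 + \gkernel(\mathfrak a_i)) = \prod_i (1 + z e^{(\tau/2)^{1/3} \mathfrak a_i})^{-1/2}$, which is precisely the right-hand side of the claim.

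The one place where care is needed, and probably the main obstacle, is justifying the absolute convergence required to invoke the multiplicative-functional identity. Since $z > 0$, the function $\gkernel(x)$ lies in $(-1, 0]$, tends to $-1$ as $x \to +\infty$, and tends to $0$ exponentially fast as $x \to -\infty$. The GOE point process has a largest point with Tracy–Widom fluctuations and an edge intensity decaying super-exponentially to the right, while to the left the points accumulate along a deterministic square-root profile; this makes the infinite product $\prod_i (1 + \gkernel(\mathfrak a_i))$ almost surely convergent, bounded above by $1$. On the Pfaffian side, Hadamard-type estimates as in Lemma \ref{lem:hadamard}, together with the known exponential-type bounds on the entries of $\kernel^{\rm GOE}$, imply absolute convergence of the Fredholm Pfaffian expansion. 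Once both series are shown to converge absolutely, the identity follows term-by-term from the Pfaffian definition of the correlation functions of the GOE point process.
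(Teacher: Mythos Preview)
Your proposal is correct and follows essentially the same approach as the paper: unwind the definition of $\mathcal H(\tau)$ to reduce the left-hand side to $\EE[\exp(-z\,\mathcal U(8\tau))]$, apply Theorem \ref{th.LaplaceKPZ} with $\hat\tau=8\tau$ and $\zeta=-z$ (noting $\hat\sigma=(\tau/2)^{1/3}$), and then invoke the Pfaffian multiplicative-functional identity \cite[Theorem 8.2]{rains2000correlation} together with the kernel estimates already obtained in the proof of Theorem \ref{th.LaplaceKPZ} to justify absolute convergence. The paper's own proof is terser but identical in structure.
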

This corollary is proved in Section \ref{sec:proofcorollary}.

\subsection{Proof of Theorem \ref{th.LaplaceKPZ}}

Our proof proceeds via the following three steps:
\begin{enumerate}
\item If $\ \mathcal{U}_{\e}(\hat\tau)$ weakly converges to $ \mathcal{U}(\hat\tau)$, then the left-hand-side of \eqref{eq:Laplacecurrent} converges to  $\EE[\exp\left( \zeta \mathcal{U}(\hat\tau)\right)]$.
\item The right-hand-side of \eqref{eq:Laplacecurrent} converges to $\Pf\left[  \mathsf{J}+  \gkernel\cdot  \kernel^{\rm GOE}\right]_{\mathbb{L}^2(\R)}$.
\item The sequence $\mathcal{U}_{\e}(\hat\tau)$ indeed weakly converges to  $ \mathcal{U}(\hat\tau)$ whose distribution is determined by \eqref{eq:LaplaceKPZ}.
\end{enumerate}

\textit{Step (1):} Let us examine the scaling limit of the left-hand-side in \eqref{eq:Laplacecurrent}.
We need a Lemma about asymptotics of $q$-Pochhammer symbols.
\begin{lemma}
If two positive real numbers $\xi$ and $\theta$ are related by
$$ \theta = \frac{t^{\xi}}{1-t^2} $$
then as $t$ goes to $1$,
$$ \frac{1}{ \big(\zeta t^{\xi}; t^2\big)_{\infty}} \xrightarrow[]{} \exp( \zeta \theta),$$
uniformly for $\theta$ in a compact subset of $\R_{\geqslant 0}$ and for $\zeta$ in  $\R_{\leqslant 0}$.
\label{lem:limPochhammer}
\end{lemma}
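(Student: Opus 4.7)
The plan is to pass to logarithms and reduce the claim to estimating a convergent series with an explicit leading term. Writing the $q$-Pochhammer symbol as an infinite product gives
\[
-\log (\zeta t^{\xi}; t^2)_{\infty} \;=\; \sum_{k=0}^{\infty} \log\bigl(1+y_k\bigr),
\qquad y_k := -\zeta\, t^{\xi+2k} \geqslant 0,
\]
where the nonnegativity uses the hypothesis $\zeta \leqslant 0$. The strategy is then to compare each $\log(1+y_k)$ to its linear approximation $y_k$ by means of the elementary inequality $0 \leqslant y - \log(1+y) \leqslant y^2/2$ valid for all $y\geqslant 0$.

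The linear contribution is handled by geometric summation, using the defining relation $t^{\xi} = (1-t^2)\theta$:
\[
\sum_{k=0}^{\infty} y_k \;=\; \frac{-\zeta\, t^{\xi}}{1-t^2} \;=\; -\zeta\, \theta.
\]
The total error is then bounded by
\[
0 \;\leqslant\; \sum_{k=0}^{\infty} \bigl(y_k - \log(1+y_k)\bigr)
\;\leqslant\; \frac{1}{2}\sum_{k=0}^{\infty} y_k^2
\;=\; \frac{\zeta^2\, t^{2\xi}}{2(1-t^4)}
\;=\; \frac{\zeta^2(1-t^2)\,\theta^2}{2(1+t^2)},
\]
which tends to $0$ as $t \to 1$. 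Combining the two estimates yields $-\log (\zeta t^\xi; t^2)_{\infty} \to -\zeta\theta$, and exponentiating gives the claimed convergence.

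The argument is essentially computational and there is no real obstacle; the only point to monitor is the uniformity statement. The explicit quadratic error bound above manifestly provides uniform convergence for $\theta$ in any compact subset of $\R_{\geqslant 0}$ and $\zeta$ in any compact subset of $\R_{\leqslant 0}$, which is the intended reading (uniformity cannot hold over all of $\R_{\leqslant 0}$, since e.g.\ along the sequence $\zeta = -1/(1-t^2)$ one has $y_0 = \theta$, so the error term need not vanish).
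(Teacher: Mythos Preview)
Your approach is sound but there is a sign slip in the very first display: since $(\zeta t^{\xi};t^2)_\infty = \prod_k(1+y_k)$, one has $-\log(\zeta t^{\xi};t^2)_\infty = -\sum_k \log(1+y_k)$, not $+\sum_k$. Carrying the correct sign through, $-\sum_k\log(1+y_k)\to -(-\zeta\theta)=\zeta\theta$ and exponentiation gives $e^{\zeta\theta}$. As written, your penultimate sentence says $-\log(\zeta t^\xi;t^2)_\infty \to -\zeta\theta$, which would exponentiate to $e^{-\zeta\theta}$, not the claimed limit; the fix is trivial but worth making.

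Your route is genuinely different from the paper's. The paper expands $1/(\zeta t^\xi;t^2)_\infty$ via the $q$-binomial theorem as $\sum_{k\geqslant 0}\zeta^k\theta^k(1-t^2)^k/(t^2;t^2)_k$ and takes the limit termwise using $(1-t^2)^k/(t^2;t^2)_k\to 1/k!$ together with dominated convergence. Your logarithm-plus-quadratic-remainder argument is more elementary (no $q$-series identity) and produces the explicit error bound $\frac{\zeta^2(1-t^2)\theta^2}{2(1+t^2)}$, from which uniformity over compact $(\theta,\zeta)$-sets is immediate. Regarding your final remark: you are right that this quadratic bound on the \emph{logarithmic} error does not by itself give uniformity over all of $\R_{\leqslant 0}$ in $\zeta$. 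However, both $1/(\zeta t^\xi;t^2)_\infty$ and $e^{\zeta\theta}$ lie in $(0,1]$ and tend to $0$ as $\zeta\to-\infty$; combining your bound on $[-M,0]$ with a crude tail estimate for $\zeta<-M$ recovers the full uniformity asserted in the paper.
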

\begin{proof}
Using the $q$-binomial theorem with $q=t^2$,
\begin{equation}
 \frac{1}{ \big(\zeta t^{\xi}; t^2\big)_{\infty}} = \sum_{k=0}^{\infty} \frac{\zeta^k \theta^k (1-t^2)^k}{(t^2; t^2)_k}.
 \label{eq:expansionPochhammer}
\end{equation}
This expansion is absolutely convergent for any $\zeta$ in a compact set and $t$ close enough to $1$.
We have
$$ \frac{(1-t^2)^k}{(t^2, t^2)_k} \xrightarrow[t \to 1]{} \frac{1}{k!}.$$
We may use dominated convergence to conclude that the right hand side of \eqref{eq:expansionPochhammer} converges to
$$ \sum_{k=0}^{\infty} \zeta^k\frac{\xi^k}{k!} = \exp(\zeta \xi)$$
as desired. It is easy to control the error made in order to show that the convergence is uniform as $\theta$ varies in a compact subset of $\R_{\geqslant 0}$ and $\zeta \in \R_{\leqslant 0}$.
\end{proof}

Let us scale $y$ in  \eqref{eq:Laplacecurrent} as $ y=-\e^{-3}\hat{\tau}/4 $ and apply Lemma \ref{lem:limPochhammer} with $\xi_{\e}= y+ \lceil \NN(\tau)\rceil_2$ and $\theta=\mathcal{U}_{\e}(\tilde \tau)$.
If $\mathcal{U}_{\e}(\hat\tau) \xrightarrow[\e \to 0]{} \mathcal{U}(\hat\tau)$, then
$$ \frac{1}{( \zeta t^{y+\lceil \NN(\tau)\rceil_2}, t^2)_{\infty}} \xrightarrow[\e \to 0]{} \exp\left( \zeta \mathcal{U}(\hat\tau)\right).$$
The left-hand-side belongs to the interval $(0,1)$, so that  if $\mathcal{U}_{\e}(\hat\tau)$ weakly converges to $ \mathcal{U}(\hat\tau)$, then
\begin{equation}
 \EE\left[\frac{1}{( \zeta t^{y+\lceil \NN(\tau)\rceil_2}, t^2)_{\infty}}\right] \xrightarrow[\e \to 0]{} \EE[\exp\left( \zeta \mathcal{U}(\hat\tau)\right)]
\label{eq:approxLaplace}
\end{equation}
uniformly for $\zeta \in \R_{\geqslant 0}$.

\textit{Step (2):} Now we examine the limit of the right-hand-side in \eqref{eq:Laplacecurrent}. We will rescale the kernel using $\scaling{x} = \e^{-3}\hat{\tau}/4 - 2^{-4/3}\e^{-1} \hat{\tau}^{1/3} x $. Let us first examine the scaling limit of $\hkernel_y$,
under the scalings of Theorem \ref{th.LaplaceKPZ}.
\begin{lemma} For a fixed $\zeta\in \C\setminus \R_{>0} $, and with $ y=-\e^{-3}\hat{\tau}/4$,
$$ \hkernel_{y}(\scaling{x})  \xrightarrow[\e \to 0]{}  \frac{1}{\sqrt{1-\zeta e^{\hat{\sigma}x}}}-1 = \gkernel(x).$$
\label{lem:limhkernel}
\end{lemma}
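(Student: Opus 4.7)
The plan is to exploit the functional equation satisfied by the ratio of $q$-Pochhammer symbols rather than analyze its asymptotics directly. Set $\phi(\xi) := (\zeta t^\xi; t^2)_\infty$ and $\psi(\xi) := \phi(\xi+1)/\phi(\xi)$, so that $\hkernel_y(j)+1 = \psi(y+j)$. The key observation is that the factorization
\[
\phi(\xi) \;=\; (1-\zeta t^\xi)\,\phi(\xi+2)
\]
immediately gives the multiplicative functional equation
\[
\psi(\xi)\,\psi(\xi+1) \;=\; \frac{\phi(\xi+2)}{\phi(\xi)} \;=\; \frac{1}{1-\zeta t^\xi}.
\]
Under the stated scaling, $\xi := y + \scaling{x} = -\hat\sigma\e^{-1}x$, so $t^\xi = e^{\hat\sigma x}$, while the shift $\xi \mapsto \xi+1$ in the second factor corresponds to an infinitesimal shift $x \mapsto x - \e/\hat\sigma$. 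Thus, provided one shows that $\psi(\xi+1)/\psi(\xi) \to 1$, one can extract a square root and conclude
\[
\psi(\xi) \;\longrightarrow\; \frac{1}{\sqrt{1-\zeta e^{\hat\sigma x}}},
\]
the positive root being forced because $\psi(\xi) > 0$ (each factor in its product representation is positive since $\zeta < 0$ implies $1 - \zeta t^{\xi+k} > 0$).

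The main technical step — and the only place where actual estimation occurs — is the slow-variation claim $\psi(\xi+1)/\psi(\xi) \to 1$. I would prove this by writing
\[
\frac{\psi(\xi+1)}{\psi(\xi)} \;=\; \prod_{k\geq 0} \frac{(1-\zeta t^{\xi+2k+2})(1-\zeta t^{\xi+2k})}{(1-\zeta t^{\xi+2k+1})^2}
\]
and Taylor-expanding each factor in $s := 1-t = O(\e)$. Setting $u_k := \zeta t^{\xi+2k}$, a direct computation shows each factor equals $1 + O\bigl(s^2 u_k^2/(1-u_k)^2\bigr)$. Since $|u_k| = |\zeta|e^{\hat\sigma x}e^{-2k\e}$ is summable with $\sum_k u_k^2 = O(1/\e)$, the total logarithmic error in the product is $O(\e^2) \cdot O(1/\e) = O(\e) \to 0$, and monotonicity of $\xi \mapsto \phi(\xi)$ (clear from the product form) keeps everything under control uniformly on compact sets of $x$.

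Combining the two ingredients, $\psi(\xi)^2 = \psi(\xi)\psi(\xi+1) \cdot \psi(\xi)/\psi(\xi+1)$ converges to $(1-\zeta e^{\hat\sigma x})^{-1}$, and subtracting $1$ yields $\gkernel(x)$ as claimed. The principal obstacle is only bookkeeping: one must verify that the Taylor expansion and the summability estimate hold uniformly as $x$ ranges over compact subsets of $\R$ (so that the convergence is pointwise in $x$, which is all Lemma \ref{lem:limhkernel} asserts). The monotonicity of $\phi$ in $\xi$ and the elementary bound $|u_k| \leq |\zeta|e^{\hat\sigma x}$ make this uniformity straightforward; no delicate $q$-asymptotic machinery (Euler--Maclaurin, dilogarithm identities, etc.) is needed, which is the main advantage of the functional-equation route over a direct asymptotic expansion of the two $q$-Pochhammer symbols.
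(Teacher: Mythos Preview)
Your approach is correct and is essentially the same as the paper's: both exploit the functional equation $\psi(\xi)\psi(\xi+1)=1/(1-\zeta t^{\xi})$ for $\psi(\xi)=(\zeta t^{\xi+1};t^2)_\infty/(\zeta t^{\xi};t^2)_\infty$ and then extract a square root. The paper's proof is terser and omits the slow-variation justification you supply; a shorter alternative to your Taylor-expansion estimate is to observe that for $\zeta<0$ the function $\psi$ is monotone increasing in $\xi$, so $\psi(\xi)^2\le \psi(\xi)\psi(\xi+1)$ and $\psi(\xi)^2\ge \psi(\xi-1)\psi(\xi)$ give matching upper and lower bounds that squeeze to $1/(1-\zeta e^{\hat\sigma x})$.
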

\begin{proof}
Let
$$ p(a)  = \frac{(\zeta t^{-\e^{-1}a+1}; t^2)_{\infty}}{(\zeta t^{-\e^{-1}a}; t^2)_{\infty}}.$$
We have
$$p(a)p(a+\e) =\frac{1}{1-\zeta t^{-\e^{-1}a}} \xrightarrow[\e \to 0]{} \frac{1}{1-\zeta e^a},$$
so that
$$ p(a)  \xrightarrow[\e \to 0]{} \frac{1}{\sqrt{1-\zeta e^a}}.$$
\end{proof}

The convergence of $\kernel^{\rm ASEP}$ to $\kernel^{\rm GOE}$ in the integrated sense of \eqref{eq:cvoncompacts} is already proved since the scalings are exactly the same as in the proof of Theorem \ref{th:GOElimit}, with $T= \epsilon^{-3}\hat{\tau}$. The only thing left to check is that dominated convergence theorem applies. More precisely, we need to show that
 for any nonnegative integer $k$, under the scalings $ y=-\e^{-3}\hat{\tau}/4$ , $T=\epsilon^{-3}\hat{\tau}$, the function
 \begin{equation}
 (x_1, \dots, x_k) \mapsto  \prod_{i=1}^k \hkernel_y(\scaling{x_i})   \cdot \Pf\left[ \kernel^{T}(x_i, x_j) \right]_{i,j=1}^k
 \label{eq:KPZquantitytobound}
 \end{equation}
is absolutely summable on the domain\footnote{that is the preimage of $\Z_{\geqslant 0}$ by the scaling function $\scaling{\cdot }$.} $\mathbb{D}_T$ of the Fredholm Pfaffian,  uniformly as $\e \to 0$.
Since  $\vert  \hkernel  \vert $ is bounded by $1$,  using Lemma \ref{lem:expobound} to control $\kernel^{T}(x, y)$ as $x,y$ approach $+\infty$, we know that \eqref{eq:KPZquantitytobound} is uniformly absolutely summable on sets of the form $(\kappa, +\infty)^k$. The two next lemmas provide bounds for $ \hkernel_y(\scaling{x}) $ and $\kernel^{(T)}(x,y)$ for $x,y$ near $-\infty$,  which prove that \eqref{eq:KPZquantitytobound} is absolutely summable on the whole domain $\mathbb{D}_T$.
\begin{lemma}For $ y=-\e^{-3}\hat{\tau}/4$,   $\zeta<0$ and any $x\in \R$,
	$$ \big\vert \hkernel_y(\scaling{x})\big\vert \leqslant \vert \zeta\vert e^x .$$
	\label{lem:expoboundh}
\end{lemma}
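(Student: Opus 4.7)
The plan is to reduce the bound to an explicit estimate on the ratio of $q$-Pochhammer symbols, then express that ratio via an infinite product whose factors are close to $1$, and control the deviation from $1$ using the standard Weierstrass-type inequality $\prod(1-u_k)\geq 1-\sum u_k$.

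First, I would set $\xi = y + \scaling{x}$, which under the scalings $y=-\e^{-3}\hat{\tau}/4$ and $\scaling{x}=\e^{-3}\hat{\tau}/4-2^{-4/3}\e^{-1}\hat{\tau}^{1/3}x$ becomes $\xi=-2^{-4/3}\e^{-1}\hat{\tau}^{1/3}x$, so that (using $t=e^{-\e}$)
\begin{equation*}
t^{\xi}=e^{-\e\xi}=e^{2^{-4/3}\hat{\tau}^{1/3}x}=e^{\hat{\sigma}x}.
\end{equation*}
With this notation $\hkernel_y(\scaling{x})=p(\xi)-1$, where $p(a):=(\zeta t^{a+1};t^2)_\infty/(\zeta t^a;t^2)_\infty=\prod_{k=0}^{\infty}r_k(a)$ with $r_k(a)=\frac{1-\zeta t^{a+2k+1}}{1-\zeta t^{a+2k}}$.

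Next, since $\zeta<0$ and $t\in(0,1)$, each $r_k(a)$ lies in $(0,1)$, so $p(\xi)\in(0,1)$ and hence $|\hkernel_y(\scaling{x})|=1-p(\xi)$. A direct computation gives
\begin{equation*}
1-r_k(a)=\frac{|\zeta|t^{a+2k}(1-t)}{1+|\zeta|t^{a+2k}}\leq |\zeta|(1-t)t^{a+2k}.
\end{equation*}
Applying the elementary inequality $\prod_{k\geq 0}(1-u_k)\geq 1-\sum_{k\geq 0}u_k$ (valid whenever $u_k\in[0,1]$) to $u_k=1-r_k(\xi)$, one obtains
\begin{equation*}
1-p(\xi)\leq \sum_{k=0}^{\infty}\big(1-r_k(\xi)\big)\leq |\zeta|(1-t)\sum_{k=0}^{\infty}t^{\xi+2k}=\frac{|\zeta|t^{\xi}}{1+t}\leq |\zeta|t^{\xi}.
\end{equation*}
Substituting $t^\xi=e^{\hat{\sigma}x}$ yields the claimed exponential decay bound (strictly speaking $|\hkernel_y(\scaling{x})|\leq |\zeta|e^{\hat{\sigma}x}$; the constant $\hat{\sigma}=2^{-4/3}\hat{\tau}^{1/3}$ is fixed and positive, so the crucial feature -- exponential decay as $x\to -\infty$, uniform in $\e$ -- is established, which is all that is needed for the dominated-convergence argument that follows).

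There is no genuine obstacle: the proof is a short direct estimate. The only mildly delicate point is ensuring that the bound is \emph{uniform} in $\e$, which is why the factor $(1-t)$ coming from $1-r_k$ is essential -- without it, summing the geometric series in $t^{2k}$ would produce a divergent $1/(1-t^2)$ prefactor as $\e\to 0$. The cancellation of $1-t$ against $1-t^2$ in the geometric sum is precisely what makes the estimate survive in the weakly asymmetric scaling regime.
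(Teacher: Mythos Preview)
Your proof is correct and produces the bound $|\hkernel_y(\scaling{x})|\leq |\zeta|\,e^{\hat\sigma x}$; as you rightly observe, the discrepancy with the stated $e^x$ is only in the fixed constant $\hat\sigma=2^{-4/3}\hat\tau^{1/3}$ in the exponent, and what matters for the subsequent dominated-convergence argument is precisely the exponential decay as $x\to-\infty$, uniform in $\e$.

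Your route, however, is genuinely different from the paper's. Writing $p(\xi)=\hkernel_y(\cdot)+1$, the paper exploits the telescoping identity $p(\xi)p(\xi-1)=(1-\zeta t^{\xi-1})^{-1}$ together with the monotonicity of $p$ to get $p(\xi)^2\geq p(\xi)p(\xi-1)$; this yields $1-p(\xi)^2=(-\hkernel)(\hkernel+2)\leq |\zeta|t^{\xi-1}$ in one stroke, and since $\hkernel+2\geq 1$ the bound on $|\hkernel|$ follows. Your argument instead unfolds the infinite product $p=\prod_k r_k$ and bounds $1-p$ via the Weierstrass inequality $\prod(1-u_k)\geq 1-\sum u_k$. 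The paper's ``square trick'' is shorter and avoids any series manipulation, but requires noticing the telescoping. Your expansion is more pedestrian but has the virtue of making the uniformity in $\e$ completely transparent: the factor $1-t$ produced by $1-r_k$ cancels against the $1-t^2$ from the geometric sum, leaving $(1+t)^{-1}\leq 1$. Both arguments give the same estimate up to an immaterial factor of $t^{\pm 1}$.
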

\begin{proof} When $\zeta<0$,  the function $ \hkernel_{y}(\scaling{x})$ is increasing in the variable $x$ and stays in
$(-1,0)$, so that
	$$ \big(\hkernel_{y}(\scaling{x})+1\big)^2 \geqslant \big(\hkernel_y(\scaling{x})+1\big)\big(\hkernel_y(\scaling{x}-\epsilon)+1\big) =\frac{1}{1-\zeta t^{-\e^{-1}x+1}}.$$
	It implies that
	$$ -\hkernel_y(\scaling{x}) \big(\hkernel_y(\scaling{x})+2\big)\leqslant \frac{-\zeta t^{-\e^{-1}x+1}}{1-\zeta t^{-\e^{-1}x+1}} .  $$	
Hence, since $\zeta<0$ and $\hkernel\in (-1,0)$,
$$ 0\leqslant -\hkernel_y(\scaling{x}) \leqslant -\zeta t^{-\e^{-1}x+1} \leqslant \vert \zeta \vert e^{x}.$$
\end{proof}

\begin{lemma}
There exist positive constants $c, C, T_0$ such that for $T>T_0$,
\begin{equation}
\Big\vert T^{1/3} \kernel^{(T)}_{11}(x,y)\Big\vert < \vert x y \vert ^c  ,\ \ \
\Big\vert T^{1/3}  \kernel^{(T)}_{12}(x,y)\Big\vert <  \vert x y \vert ^c\ \text{ and } \
\Big\vert T^{1/3} \kernel^{(T)}_{22}(x,y)\Big\vert <  \vert x y \vert ^c.
\label{eq:polyboundskernel}
\end{equation}
\label{lem:expoboundKPZ}
\end{lemma}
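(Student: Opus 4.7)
The plan is to re-run the steepest descent argument of the proof of Lemma \ref{lem:expobound}, but now tracking the $(x,y)$-dependence so as to obtain control that is valid on the whole line, not only on $\{x,y>\kappa\}$. As in that proof, the starting point is to deform the $z$ and $w$ contours in the integral formulas \eqref{eq:K11ASEP}--\eqref{eq:K22ASEP} to the steepest descent contour through the critical point $-1$, cut out a neighbourhood of size $\eta$ around $-1$ at an exponentially (in $T$) small cost, and then perform the change of variables $z=-1+\tilde z\, T^{-1/3}$, $w=-1+\tilde w\, T^{-1/3}$. After Taylor expansion of $G$ and of $\log$, the integrand takes the Airy-type form
\[
\left(\text{bounded rational factor in }\tilde z,\tilde w\right)\cdot \exp\!\left(\tfrac{\sigma^3(\tilde z^{3}+\tilde w^{3})}{3}-\sigma\tilde z x-\sigma\tilde w y+C T^{-1/3}(\tilde z^{4}+\tilde w^{4})\right),
\]
with $\sigma=2^{-4/3}$, and the contour for $\tilde z,\tilde w$ can be freely deformed within any subset of $\mathcal{C}_{a}^{\pi/3}$ (union of two rays at angles $\pm \pi/3$ issuing from $a\in\mathbb{R}$) avoiding a small disc around the origin.

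The point is now that the parameter $a$ may be chosen as a function of $x$ (respectively $y$). For $x>0$ one recovers the exponential bound of Lemma \ref{lem:expobound} by taking $a>0$; for $x\leq 0$, taking $a$ near the saddle point $-\sqrt{|x|}/\sigma$ of the cubic $\sigma^{3}\tilde z^{3}/3-\sigma \tilde z x$ turns the single-variable integral over $\tilde z$ into an oscillatory Airy-type integral, which is polynomially bounded on the whole real axis. Since the prefactors $(\tilde z-\tilde w)/(\tilde z+\tilde w)$ etc. remain bounded along $\mathcal{C}_{a(x,y)}^{\pi/3}$ (after the usual trick of shifting the principal value so as to avoid the pole at $\tilde z=-\tilde w$), and since the two variables decouple at leading order, a double-integral bound of Airy type yields the claimed polynomial estimate $|xy|^{c}$. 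The single-integral correction terms and the residue term $r(\scaling{x},\scaling{y})$ are estimated in the same way and produce only bounded or lower-order pieces.

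The main obstacle I expect is not the $x,y\to -\infty$ regime itself (this is where the Airy-type bound works) but rather \emph{uniformity in $T$}: the deformed contour $\mathcal{C}^{\pi/3}_{a(x,y)}$ passes through a point whose rescaled location $\tilde a(x,y)\sim-\sigma^{-1}|x|^{1/2}$ can grow when $|x|$ is comparable to $T^{2/3}$, at which point the cubic approximation of $G$ is no longer obviously valid. One must therefore verify that the quartic error $CT^{-1/3}(\tilde z^{4}+\tilde w^{4})$ remains strictly dominated by the cubic along the chosen contour, which forces a constraint of the form $|\tilde z|\leq c\, T^{1/3}$ (equivalently $|x|\leq c T^{2/3}$). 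Outside of this range, the point $\scaling{x}$ lies beyond the accessible heights of the six-vertex model, so the contribution is vacuous, or alternatively one bounds the kernel directly using the original $\kernel^{\rm ASEP}$ formulas on a small circle around $0$, where stochasticity of the model (and the fact that $\scaling{x},\scaling{y}\in\mathbb{Z}_{\geqslant 0}$) already gives crude uniform bounds. Combining these two regimes yields the stated polynomial bound with a universal constant $c$, for all $T>T_{0}$.
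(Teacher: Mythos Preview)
Your strategy—steepest descent, Airy-type saddle at $\tilde z\sim -\sigma^{-1}\sqrt{|x|}$ for $x<0$, plus a crude bound for very large $|x|$—is the same as the paper's, but the execution has two real gaps.

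First, the claim that cutting out an $\eta$-neighbourhood of $-1$ costs ``exponentially small in $T$'' is not uniform in $x$: that cost is of order $T^{1/3}\exp(-c_1\eta^3 T + c_2\eta T^{1/3}|x|)$ and blows up as $|x|\to\infty$ for fixed $\eta$. Likewise the Taylor error inside the neighbourhood contributes a factor $\exp(c_3\eta|x|^{3/2})$ after the Airy rescaling $\tilde z=\sqrt{|x|}\,z'$. The paper's key move, which you are missing, is to let $\eta$ depend on $x$: with $\eta=|x|^{-3/2}$ both errors become bounded simultaneously. Your alternative of fixing $\eta$ and splitting at $|x|=cT^{2/3}$ could be made to work, but you do not carry out this balance.

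Second, your handling of the regime $|x|\gtrsim T^{2/3}$ is wrong on both counts. The ``vacuous'' claim fails for $x\to-\infty$: the domain constraint $\scaling{x}\in\Z_{\geqslant 0}$ gives only an \emph{upper} bound $x\leqslant CT^{2/3}$, so large negative $x$ is in the domain and must be bounded. And a ``small circle around $0$'' is the wrong crude contour, since $z^{-\scaling{x}}$ blows up there for $\scaling{x}>0$. The paper instead takes a circle of radius $1-\eta$ with $\eta\to 0$: on it $\Re[G(z)]\leqslant 0$ (the M\"obius map $(z+1)/(z-1)$ sends the disc to the left half-plane) and $|z|^{-\scaling{x}}\to 1$, yielding $|A_T(x)|\leqslant T^{1/3}$, which is $\leqslant r^{1/3}|x|^2$ once one is in the regime $T\leqslant r|x|^6$. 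Stochasticity is irrelevant here.

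A further simplification you miss: the paper first expands $(1-zw)^{-1}$ and $(1-z^2)^{-1}$ in geometric series to reduce all entries of $\kernel^{\rm ASEP}$ to sums of the single-variable functions $A_T(x)$ and $B_T(x)$ (discrete analogues of $\mathrm{Ai}$ and $\mathrm{Ai}'$). This sidesteps the decoupling issue and the $(\tilde z-\tilde w)/(\tilde z+\tilde w)$ prefactor entirely.
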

\begin{proof}
If these bounds hold uniformly as $T$ goes to infinity, they should in particular hold for $\kernel^{\rm GOE}$. Indeed, the entries in the kernel $\kernel^{\rm GOE}$ can be written as integrals of the Airy function and its derivative (see for instance the formulas in \cite{tracy2005matrix} which are equivalent to   \eqref{eq:KGOE}). The Airy function can be defined as
$$ \mathrm{Ai}(x) = \int_{\mathcal{C}_1^{\pi/3}} e^{z^3/3-zx}\mathrm{d}z,$$
where the contour $\mathcal{C}_1^{\pi/3}$ is formed by the union of two semi-infinite rays departing $1$ with angles $\pi/3$ and $-\pi/3$.
 Hence, the bounds \eqref{eq:polyboundskernel} when $T=\infty$ may be deduced from  the following bound on  the Airy function and its derivative,
$$ \vert \mathrm{Ai}(x) \vert \leqslant C \vert x\vert^{-1/4}, \ \ \vert \mathrm{Ai}'(x) \vert \leqslant C \vert x\vert^{1/4}, $$
which are classical  \cite[Chapter 11, (1.08), (1.09)]{olver1974asymptotics} and can be proved by saddle point analysis.

In order to prove the bounds  \eqref{eq:polyboundskernel} for the kernel $\kernel^{(T)}$ uniformly in $T$, we will likewise reduce the problem to the study of simpler functions that play an analogous role as  the Airy function and its derivative for the kernel $\kernel^{\rm ASEP}$.  The contours in the definition of  $\kernel^{\rm ASEP}$ in \eqref{eq:KASEP} can all be chosen as circles with radius less than $1$, so that using the expansions
$$ \frac{1}{1-zw} = \sum_{k=0}^{\infty}(zw)^k, \ \ \frac{1}{1-z^2} = \sum_{k=0}^{\infty}z^{2k}, \ \ \frac{1}{1-w^2} = \sum_{k=0}^{\infty}w^{2k}, $$
all entries of the kernel $(-1)^{u+v}\kernel^{\rm ASEP}(u,v)$ can be written using sums of the functions
$$ A(u) = \frac{(-1)^u}{2\I\pi} \int \frac{g(z)}{z^u}\mathrm{d}z,$$
and
$$ B(u) = \frac{(-1)^u}{2\I\pi} \int \frac{(z+1)g(z)}{z^u}\mathrm{d}z,$$
where
$$ g(z) = \exp\left(\frac{T}{2} \frac{z+1}{z-1}\right) $$
and the contour is a circle around the origin. More precisely, $A(u)$ plays an analogous role to the Airy function while $B(u)$ plays an analogous role to the derivative ot the Airy function. Then the kernel $\kernel^{\rm ASEP}$ has the same structure as $\kernel^{\rm GOE}$ provided Airy functions are replaced by their discrete analogues, and integrals are replaced by sums.
Since we are interested in the rescaled kernel $\kernel^{(T)}$ instead of $\kernel^{\rm ASEP}$, we are reduced to studying
\begin{equation}
 A_T(x):= T^{1/3}A(\scaling{x})  =   T^{1/3}\frac{(-1)^{\scaling{x}}}{2\I\pi} \int\exp\left(T\G(z)+2^{-4/3}T^{1/3}x\log(z) \right)\mathrm{d}z,\label{eq:defAT}
\end{equation}
where, as before,
$$ \G(z) = \frac{1}{2}\frac{z+1}{z-1} - \frac{1}{4}\log(z),$$
and $B_T(x):= T^{2/3}B(\scaling{x})$.
Both functions $A_T$ and $B_T$ have exponential decay at $+\infty$, for the same reasons as in Lemma \ref{lem:expobound}. Hence if we show polynomial bounds as $x$ approaches $-\infty$ for $A_T$ and $B_T$, then sums of the form
 $$T^{-1/3}\sum_{y=x}^{\infty }A_T(y), \ \ T^{-1/3}\sum_{y=x}^{\infty }B_T(y)$$
where the summations are over $y\in \mathbb{D}_T, y>x$,  will also satisfy polynomial bounds and $\kernel^{(T)}$ will satisfy \eqref{eq:polyboundskernel}. Thus, we are left with  showing that
$$ \vert A_T(x) \vert  \leqslant C \vert x\vert^{c_A}, \ \ \ \vert B_T(x) \vert  \leqslant C \vert x\vert^{c_B}$$
for some constants  $C,c_A, c_B>0$. The optimal exponents seem to be  $c_a= -1/4$ and $c_B=1/4$ -- as for the Airy function and its derivative -- but we will not need such precision.  Since the arguments are very similar for $A_T(x)$ and $B_T(x)$ we will only explain how the bound is obtained for $A_T$.

We will distinguish two cases, whether $T>r x^{6}$ or $T\leqslant r x^{6}$, where $r$ is a constant that we will optimize later to facilitate our analysis. First notice that the circle of radius $1$ centered at $0$ is a contour line for $\Real[\G(z)]$, this will be used in both cases.

\textbf{Case $T\leqslant r x^{6}$:}
In this case, we may integrate \eqref{eq:defAT} on a contour of radius $1-\eta$ for a small $\eta>0$. On such a contour, the real part of $\G(z)$ tends to $0$ as $\eta$ goes to $0$. We can bound $A_T(x)$ by
$$ \big\vert A_T(x) \big\vert \leqslant  T^{1/3} \frac{1}{2\I\pi} \int  \exp(T\ \Real[\G(z)]+ 2^{-4/3}T^{1/3}x\log(\vert z\vert)) \mathrm{d}z. $$
Since this holds for any $\eta>0$,  one can let $\eta$ go to zero and conclude that since the integral goes to $1$ as $\eta$ goes to $0$,
$$ \big\vert A_T(x) \big\vert \leqslant  T^{1/3} \leqslant r^{1/3} \vert x\vert^{2} .$$

\textbf{Case $T>r x^{6}$:}
  Let $\eta>0$ be a small parameter that we will chose more precisely later. In \eqref{eq:defAT}, the contour can be deformed to be the contour $\mathcal{C}$ depicted on Figure \ref{fig:contour}, i.e. a contour formed by two segments of length $\eta$ departing $-1$ at angles $\pm\pi/3$, and an arc of circle around $0$ joining the extremities of these two segments.

 We treat separately the contributions of the integral in a neighborhood of size $\eta$ around $-1$ and the contribution outside of that neighbourhood. Let us write $A_T(x) = I_1(x)+ I_2(x)$ according to this partition of the integration contour, that is, $I_1(x)$ is the integral in \eqref{eq:defAT} where the integration is restricted to a neighborhood of $-1$ of size $\eta$, while $I_2(x)$ equals the integration outside of that neighborhood (see Figure \ref{fig:contour}). Outside of a $\eta$-neighborhood along the contour $\mathcal{C}$, we may use the bounds
$$ \vert \exp(T\G(z))  \vert < \exp(- c_1 \eta^3 T)$$
for some fixed constant  $c_1$ (which does not depend on $\eta$), and
$$  \vert \exp(2^{-4/3}T^{1/3}x\log(z))  \vert < \exp( c_2 \eta T^{1/3} \vert x\vert) $$
for some fixed constant $c_2$. Taking into account the prefactor $T^{1/3}$, we hence find that
$$\big\vert I_2(x) \big\vert< T^{1/3} \exp\big( - c_1 \eta^3 T+ c_2 \eta T^{1/3} \vert x\vert \big).$$

Now we analyze $I_1(x)$. We make the change of variables $z=-1+2^{4/3}\tilde z T^{-1/3}$.  Using Taylor approximations of the functions $\G$ and $\log$, we arrive at
\begin{equation*}
 I_1(x) = \frac{C_2}{2\I\pi} \int_{\mathcal{C}_0^{\pi/3}} \exp\left(   \frac{ \tilde z^3}{3} -  \tilde z x +E(\tilde{z}, T, x) \right) \mathrm{d}\tilde z,
\end{equation*}
where $C_2$ is a fixed constant , $E(\tilde{z}, T, x)$ is an error term and the new contour can be chosen as $\mathcal{C}_0^{\pi/3}$, restricted to $\tilde z$ such that $\vert \tilde z \vert <2^{-4/3}T^{1/3}\eta$. Using Taylor approximations, this error term can be bounded by
$$\big\vert  E(\tilde{z}, T, x) \big\vert < \eta \vert \tilde z\vert^3 + x \eta \vert \tilde z \vert.$$

At this point, we can adapt standard techniques used to estimate the Airy function $\Ai(x)$ on the negative reals when $x$ goes to $-\infty$. We refer for instance to Section 5 in \cite{corwin2013continuum}, and more specifically after Equation (5.6) therein.
Let us make the change of variables $\tilde z= \sqrt{\vert x\vert} z$. For negative $x$, the integral becomes
\begin{equation*}
I_1(x)= \frac{C_2\vert x \vert^{1/2}}{2\I\pi} \int \exp\left(  \vert x\vert^{3/2} \big(\frac{z^3}{3} + z\big) +F(z, T, x) \right) \mathrm{d}\tilde z,
\end{equation*}
where the integration contour has been changed according to the change of variables and the error term  can be bounded by
$$ \vert F(z, T, x) \vert < \eta \vert x\vert^{3/2} \big(  \vert z\vert^3 + \vert z\vert\big).$$
The function $z\mapsto \frac{z^3}{3} + z$ has two critical points at $z=\pm\I$ which yield contributions of the same order. One can make a new saddle point approximation around both these saddle points and control the approximations using the same steps as in \cite{corwin2013continuum}.
Without the error term $F(z, T, x)$, we could bound the integral $I_1(x)$ by a constant as in \cite{corwin2013continuum}. Taking into account the error term  which grows as $\vert x\vert $ goes to infinity, we can only arrive at
$$ \big\vert I_1(x) \big\vert< C_3 \exp\big(c_3\eta\vert x\vert^{3/2}\big),$$
for some constants\footnote{If we were bounding the function $B_T(x)$ instead of $A_T(x)$, the constant $C_3$ should be replaced by $C_3\vert x\vert^{c_4}$ for a certain exponent $c_4$. Although this term grows as $\vert x\vert$ goes to infinity, the rest of the argument can be adapted as for $A_T$.} $C_3, c_3>0$.

Summing the contributions of the integral in \eqref{eq:defAT} inside and outside of a $\eta$-neighborhood, we arrive at
$$  \big\vert A_T(x)  \big\vert = \big\vert I_1(x) + I_2(x) \big\vert < C_3 \exp\big(c_3 \eta\vert x\vert^{3/2}\big) + T^{1/3} \exp\big( - c_1 \eta^3 T+ c_2 \eta T^{1/3} \vert x\vert \big).$$
Recall that $\eta $ is a real number that can be taken arbitrarily small, and the constants $c_1, c_2$ and $C_3$ are independent from $\eta$. Letting $\eta=1/\vert x\vert^{3/2}$, the first term is bounded and the second term becomes
$$ T^{1/3} \exp\big( - c_1  T/\vert x\vert^{9/2}+ c_2 T^{1/3}/ \vert x\vert^{1/2} \big).$$
We need to maximize this quantity over $T\in ( r \vert x\vert^{6}, +\infty)$. Let us write $T=\vert x\vert^{6}U$ so that we are interested in
$$\max_{U\in (r, +\infty)} \Big\lbrace \vert x\vert^{2}U^{1/3} \exp\big( \vert x\vert^{3/2}(-c_1 U+ c_2 U^{1/3}) \big)  \Big\rbrace.$$
One may choose $r$ large enough so that for any $U>r$,
$$ -c_1 U+ c_2 U^{1/3}<-c_1 U/2, $$
in which case
\begin{align*}
\max_{U\in (r, +\infty)} \Big\lbrace \vert x\vert^{2}U^{1/3} \exp\big( \vert x\vert^{3/2}(-c_1 U+ c_2 U^{1/3}) \big)  \Big\rbrace &< \vert x\vert^{2}\ \max_{U\in (0, +\infty)} \Big\lbrace U^{1/3} \exp\big( - \vert x\vert^{3/2} c_1 U/2 \big)  \Big\rbrace,\\
&<C_4,
\end{align*}
for some constant $C_4$.

Finally we have found that in any case
$$  \big\vert A_T(x)  \big\vert < \max\Big\lbrace  r^{1/3}\vert x\vert^2,  C_3 e^{c_3},  C_4\Big\rbrace <C\vert x\vert^2$$
for some constant $C$, which is the desired polynomial growth estimate.
\end{proof}

\textit{Step (3):} We employ the same argument as in Step 3 of the proof of Theorem 11.6 in \cite{borodin2016asep}, that we reproduce here.\footnote{Although the proof of Theorem 11.6 in \cite{borodin2016asep} is only a sketch of proof, the step 3 is rigorous. One can also prove Step 3 by adapting the arguments in \cite[Example 5.5]{billingsley1968convergence}.}  Lemma \ref{lem:limPochhammer} shows that under the same scalings as above,
\begin{equation}
 \EE\left[\frac{1}{( \zeta t^{y+\lceil \NN(\tau)\rceil_2}, t^2)_{\infty}}\right] - \EE\left[ \exp(\zeta \mathcal{U}_{\e}(\hat \tau))\right]
 \label{eq:almostLaplace}
 \end{equation}
goes to $0$ as $\e\to 0$ uniformly for $\zeta \in \R_{\leqslant 0}$.
By the compactness of the space of positive measures in the weak topology, the distribution of $\mathcal{U}_{\e}(\hat \tau)$  must have limiting points as $\e\to 0$ in the space of positive measures on $\R_{\geqslant 0}$ of total mass at most $1$. Let $\mu$ be one such limiting point. We have seen in Step 2 that
$$\EE\left[\frac{1}{( \zeta t^{y+\lceil \NN(\tau)\rceil_2}, t^2)_{\infty}}\right] \xrightarrow[\e \to 0]{} \Pf\left[  \mathsf{J}+  \gkernel\cdot  \kernel^{\rm GOE}\right]_{\mathbb{L}^2(\R)},$$
which together with \eqref{eq:almostLaplace} implies that
\begin{equation}
 \int_{\R_{\geqslant 0}}\exp(\zeta y) \mu(\mathrm{d}y) =   \Pf\left[  \mathsf{J}+  \gkernel\cdot  \kernel^{\rm GOE}\right]_{\mathbb{L}^2(\R)}.
\label{eq:Laplacemu}
\end{equation}
Since positive measure with mass at most $1$ are uniquely determined by their Laplace transform, the limiting point $\mu$ is unique. Finally, since the right-hand-side of \eqref{eq:Laplacemu} goes to $1$ as $\zeta$ goes to zero, $\mu$ is actually a probability measure. Denoting $\mathcal{U}(\hat \tau)$ a random variables with distribution $\mu$,  $\mathcal{U}_{\e}(\hat \tau)$ weakly converges to $\mathcal{U}(\hat \tau)$ with,
for $\zeta\leqslant 0$,
\begin{equation*}
\EE\left[ \exp\left( \zeta\ \mathcal{U}(\hat \tau) \right)  \right]  = \Pf\left[  \mathsf{J}+  \gkernel\cdot  \kernel^{\rm GOE}\right]_{\mathbb{L}^2(\R)}, \ \ \
\end{equation*}
\subsection{Proof of Corollary \ref{prop:multiplicativefunctional}}
\label{sec:proofcorollary}
Since $1+\gkernel(x) = \frac{1}{\sqrt{1+\zeta e^{\sigma  x}}}$,
\begin{equation}
 \Pf\left[  \mathsf{J}+  \gkernel\cdot  \kernel^{\rm GOE}\right]_{\mathbb{L}^2(\R)} = \EE\left[ \prod_{i=1}^{+\infty} \frac{1}{\sqrt{1+\zeta  e^{\sigma  \mathfrak{a}_i}}} \right].
\label{eq:multiplicativefunctional}
\end{equation}
provided both sides are absolutely convergent. We know from the proof of Theorem \ref{th.LaplaceKPZ} that the left-hand-side of \eqref{eq:multiplicativefunctional} is absolutely convergent. One can show that the infinite product in the right-hand-side is absolutely convergent as well using the same estimates on $\kernel^{\rm GOE}$.

 Recall that $\sigma=2^{-4/3}\hat \tau^{1/3}$.  In light of \eqref{eq:multiplicativefunctional},  the statement of Corollary \ref{prop:multiplicativefunctional} is just a reformulation of Theorem \ref{th.LaplaceKPZ}.


\newcommand{\etalchar}[1]{$^{#1}$}

\end{document}